\documentclass[10pt]{article}

\usepackage[utf8]{inputenc}
\usepackage{bbm}
\usepackage{relsize}
\usepackage{authblk}
\usepackage{geometry}
\geometry{a4paper}

\usepackage{amsthm}
\usepackage{
	amsfonts,
	amsmath,
	appendix,
	color,
	amssymb,
	graphicx,
	bm,
	stmaryrd,
	mathabx,
	amssymb,
	mathdots,
	esint
}

\usepackage{hyperref}
\usepackage{xcolor}


\numberwithin{equation}{section}
\theoremstyle{plain}

\newcommand{\R}{\mathbb R}
\newcommand{\Z}{\mathbb Z}

\newcommand{\N}{\mathbb N}
\newcommand{\C}{\mathbb C}
\newcommand{\D}{\mathbb D}
\renewcommand{\P}[1]{\mathbb{P}\left[#1\right]}
\newcommand{\E}[1]{\mathbb{E}\left[#1\right]}

\DeclareMathOperator{\sech}{sech}

\newtheorem{theorem}{Theorem}[section]
\newtheorem{conjecture}[theorem]{Conjecture}
\newtheorem{proposition}[theorem]{Proposition}
\newtheorem{lemma}[theorem]{Lemma}
\newtheorem{corollary}[theorem]{Corollary}

\newtheorem{example}{Example}[section]

\newtheorem{remark}[example]{Remark}

\newcommand{\eps}{\varepsilon}
\renewcommand{\det}{{\,\rm det}\:}

\newcommand{\Pf}{{\,\rm Pf}\:}
\newcommand{\Tr}{{\,\rm Tr}\:}
\newcommand{\tr}{{\,\rm tr}\:}

\newcommand{\sgn}{{\,\rm sgn}}

\renewcommand{\Im}{{\mathrm{Im}}}
\renewcommand{\exp}[1]{\mathrm{exp} \left\{#1\right\}}

\newcommand{\de}[1]{\mathrm{d}\, #1}
\newcommand{\De}[1]{\mathrm{d}\, #1}

\newtheorem{myremarks}[example]{Remarks}


\newenvironment{remarks}{\begin{myremarks}\begin{nummer}}%
    {\end{nummer}\end{myremarks}}
    {\end{nummer}\end{myexamples}}

\newcounter{numcount}
\newcommand{\labelnummer}{(\roman{numcount})}%

\makeatletter
\providecommand{\showkeyslabelformat}[1]{\relax}        
\let\mysaveformat\showkeyslabelformat                   %
\def\myformat#1{\raisebox{-1.5ex}{\mysaveformat{#1}}}   %

\newenvironment{nummer}%
  {\let\curlabelspeicher\@currentlabel%
    \begin{list}{\textup{\labelnummer}}%
      {\usecounter{numcount}\leftmargin0pt%
        \topsep0.5ex\partopsep2ex\parsep0pt\itemsep0ex\@plus1\p@%
        \labelwidth2.5em\itemindent3.5em\labelsep1em%
      }%
    \let\saveitem\item%
    \def\item{\saveitem%
      \def\@currentlabel{\curlabelspeicher\kern.1em\labelnummer}}%
    \let\savelabel\label%
    \def\label##1{{\ifnum\thenumcount=1\let\showkeyslabelformat\myformat\fi\savelabel{##1}}%
										{\def\@currentlabel{\labelnummer}%
									 	\let\showkeyslabelformat\@gobble
									 	\savelabel{##1item}%
										}%
	   							}%
  }{\end{list}}%

  {\let\curlabelspeicher\@currentlabel%
    \begin{list}{\textup{\labelnummer}}%
      {\usecounter{numcount}\leftmargin0pt%
        \topsep0.5ex\partopsep2ex\parsep0pt\itemsep0ex\@plus1\p@%
        \labelwidth2.5em\itemindent0em\labelsep1em%
        \leftmargin2.5em}%
    \let\saveitem\item%
    \def\item{\saveitem%
      \def\@currentlabel{\curlabelspeicher\kern.1em\labelnummer}}%
    \let\savelabel\label%
    \def\label##1{{\ifnum\thenumcount=1\let\showkeyslabelformat\myformat\fi\savelabel{##1}}%
										{\def\@currentlabel{\labelnummer}%
									 	\let\showkeyslabelformat\@gobble
									 	\savelabel{##1item}%
										}%
    							}%
  }{\end{list}}%

\usepackage{enumitem}

\let\OldItem\item
\newcommand{\MyItem}[2][]{}%
%

\newcommand{\MR}[1]{\href{http://www.ams.org/mathscinet-getitem?mr=#1}{MR#1}}

\title{On pure complex spectrum for truncations of random orthogonal matrices and 
	Kac polynomials.}

\author[1]{Martin Gebert \thanks{mgebert@math.ucdavis.edu}}
\author[2]{Mihail Poplavskyi \thanks{mihail.poplavskyi@kcl.ac.uk}}
\affil[1]{University of California, Davis}
\affil[2]{King's College London}

\begin{document}
	\maketitle
		\begin{abstract}		\noindent	 
			Let $O(2n+\ell)$ be the group of orthogonal matrices of size 
			$\left(2n+\ell\right)\times \left(2n+\ell\right)$ 
			equipped with the probability distribution
			given by normalized Haar measure. We study the probability
			\begin{equation*}
				p_{2n}^{\left(\ell\right)} = \P{M_{2n} \, \mbox{has no real eigenvalues}},
			\end{equation*}
			where $M_{2n}$ is the $2n\times 2n$ left top minor of a $(2n+\ell)\times(2n+\ell)$ orthogonal
			matrix. We prove that this probability is given in terms of a determinant 
			identity minus a weighted
			Hankel matrix of size $n\times n$ that depends on the truncation parameter $\ell$.
			For $\ell=1$ the matrix coincides with the Hilbert matrix and we prove 
			\begin{equation*}
				p_{2n}^{\left(1\right)} \sim n^{-3/8}, \mbox{ when }n \to \infty.
			\end{equation*}
			We also discuss connections of the above to the persistence probability
			for random Kac polynomials.
		\end{abstract}

\section{Introduction}
In this paper we consider truncations of random orthogonal matrices distributed according to Haar measure. We are mostly interested in the set of real eigenvalues of these matrices and in the so-called persistence probability. This is the probability of the truncated random orthogonal matrix having no real eigenvalue. However, before we go into details regarding the results, we first want to give further motivation for considering this model in terms of random Kac polynomials.
Let $\left\{a_i\right\}_{i=0}^{\infty}$ be a sequence of statistically independent copies 
of a random variable $\xi$ with zero mean and unit variance. Then we define the random polynomial
\begin{equation}\label{e:Kac_pol}
	K_N\left(z\right) = \sum\limits_{k=0}^{N} a_kz^k
\end{equation}
having random real-valued coefficients and random roots. 
Their probability distribution and quantitative properties are a centrepiece of the theory of random polynomials
dating back to the $18^{th}$ century and attracting lots of attention since then.
It was shown in \cite{SS:62}, under mild conditions on the probability distribution of $\xi$, that
 the normalized counting measure of the zeros converges to the uniform distribution on the 
unit circle when $N\to\infty$. Restricting ourselves to real roots only,
the first major problem is determining their number
\begin{equation*}
	\mathcal{N}_{\R}(N) = \# \left\{x \in \R: K_N\left(x\right) = 0\right\},
\end{equation*}
and its asymptotic behaviour when $N\to\infty$. The problem
of calculating $\E{\mathcal{N}_{\R}(N)}$ has a long history. The first significant results were obtained in a series of papers by Littlewood \& Offord \cite{LO:38,LO:39,LO:43} 
and later improved
by Kac in \cite{Kac:43} where the author derives 
in the case of $a_i\stackrel{d}{=} \mathcal{N}(0,1)$ 
	\begin{equation*}
		\E{\mathcal{N}_{\R}(N)} = \Big(\frac{2}{\pi}+o(1)\Big)\log N, \quad N\to\infty.
	\end{equation*}
Later on this was shown to be universal for a wide class of probability distributions for $\xi$
by Kac \cite{Kac:49}, Erd\"{o}s  \& Offord \cite{EO:56}, Ibragimov 
\& Maslova \cite{IM:68,IM:71_1,IM:71_2,IM:71_3}, Tao \& Vu \cite{TV:15}, see especially 
 \cite[Sec. 1.2]{TV:15} for further references.
	
Further studies of real zeros of Kac polynomials led researchers to the calculation 
of correlation functions. In \cite{BD:97} Bleher and Di found an explicit formula
for all correlation functions between zeros which is given in terms of a multidimensional integral
of Gaussian type. Unfortunately, their answer does not allow further progress in the study of 
the distribution of the zeros and did not
show any special structure of the correlation functions. 
However, P. Forrester found in \cite{Fo:10}, generalizing methods of \cite{HKPV:09}, a Pfaffian structure 
in the seemingly different model of eigenvalues of truncated random orthogonal matrices 
when $N\to\infty$. 
He argued that Blaschke products of 
the eigenvalues of rank-one
 truncated random orthogonal matrices
converge to  an infinite series  with $\mathcal N\left(0,1\right)$-distributed coefficients. 
Here rank-one refers to truncations by one column and one row.
Comparing this to \eqref{e:Kac_pol}, it is natural to call this series
\begin{equation}\label{e:Kac_ser}
	K_{\infty}\left(z\right) = \sum\limits_{k=0}^{\infty} a_kz^k, \quad a_k \in \R
	\mbox{ are i.i.d. } N\left(0,1\right),
\end{equation}
Kac series. This manifests the connection of roots of random polynomials and eigenvalues of truncated random orthogonal matrices for large $N$.
The model of truncated random orthogonal matrices was first studied in \cite{KSZ:10} where it was shown that the eigenvalues
form a Pfaffian Point Process (PPP), meaning that the correlation functions can be expressed in terms of Pfaffians $\Pf$ of the form
\begin{equation*}
	\rho^{\left(N\right)}_k \left(z_1, z_2, \ldots, z_k\right) = 
	\Pf \left\{\mathcal{K}^{\left(N\right)}_{2\times 2}\left(z_p,z_q\right)\right\}_{p,q=1}^k,
\end{equation*}
\begin{equation*}
	\mathcal{K}^{\left(N\right)}_{2\times 2}\left(z,w\right) = 
	\begin{pmatrix}
		K^{\left(N\right)}_{11}\left(z,w\right) & K^{\left(N\right)}_{12}\left(z,w\right) \\
		- K^{\left(N\right)}_{12}\left(w,z\right) & K^{\left(N\right)}_{22}\left(z,w\right)
	\end{pmatrix},
\end{equation*}
with skew-symmetric functions $K_{11}\left(z,w\right) = - K_{11}\left(w,z\right)$ and
$K_{22}\left(z,w\right) = - K_{22}\left(w,z\right)$. For the definition and basic properties
of Pfaffians we refer the reader to Appendix~\ref{app:Auxiliary}.
Later Matsumoto and Shirai in \cite{MS:13},
without using any relation to random matrices, proved that the roots of the random Kac series \eqref{e:Kac_ser}
form a PPP as well, with corresponding kernel being just the pointwise limit of the one obtained in \cite{Fo:10}.
This was another strong evidence that there is a hidden Pfaffian structure
behind random roots of Kac polynomials. This Pfaffian structure was recently explained in \cite{PS:18}
in terms of Gaussian Stationary Process with $\sech(t/2)$ correlation. This Gaussian 
process was introduced to the area in \cite{DPSZ:02} when studying the so-called persistence probability 
for Kac polynomials. 
The persistence probability of Kac polynomials is defined by 
\begin{equation*}
	p_N := \P{\mathcal{N}_{\R}(N) = 0} = 2 \P{K_N\left(x\right) > 0, \forall x \in \R}.
	\footnote{usually the persistence probability is defined as half of what we use here} 
\end{equation*}
Obviously, this question makes no sense 
for odd values of $N$, and therefore from now and on we put $N = 2n$ for some integer $n$. One can also study
more complicated probabilities of having some prescribed number of real roots as well, i.e.
\begin{equation}\label{eq:probPN2k}
	p_{N,k}:= \P{\mathcal{N}_{\R}(N) = k}.
\end{equation}
First results on the persistence probability were obtained in \cite{LO:39}, where it is proved that $p_{2n} = 
O\left(1/\log n\right)$. Only $60$ years later power-like decay
	\begin{equation}\label{eq:probPN2k2}
		p_{2n} \sim n^{-4\theta_{\text{Kac}}},\mbox{when } n\to\infty,
	\end{equation}
for some unknown $\theta_{\text{Kac}}$,  was proven in \cite{DPSZ:02} reducing the problem to 
the study of persistence probabilities for Gaussian Stationary Processes (GSP) 
$Y_t$ with correlation function 
\begin{equation*}
	R\left(t\right) = \big\langle Y_0Y_t\big\rangle = \sech(t/2),
\end{equation*}
where $\big\langle\,\cdot\,\big\rangle$ denotes the expectation value. 
The authors showed that
	\begin{equation*}
		\P{Y_t \geq 0, \forall \, 0\leq t\leq T} \sim e^{-\theta_{\text{Kac}} T},
	\end{equation*}
with $\theta_{\text{Kac}}$ being the same as in \eqref{eq:probPN2k2}. Despite being explicitly defined, the
constant $\theta_{\text{Kac}}$ remains unknown and the best known results are 
$\theta_{\text{Kac}} \in \left(1/8, 1/4\right]$ (theoretically, \cite{LS:02}) and
$\theta_{\text{Kac}} \eqsim 0.1875\pm 0.01$ (numerically, \cite{DPSZ:02},\cite{MS:08}). 
Over time this constant became very popular as it had appeared in many applications
such as persistence of integrated Brownian motion \cite{AD:13}, \cite{Vy:10}, no flipping probabilities in Ising spin 
model \cite{DHP:96}, persistence probabilities for solutions of diffusion and heat equations with random 
initial data \cite{MS:08, MSBC:96, MS:07, DM:15}. However no single model was rigorously solved and the 
constant remained unknown. Gaussian stationary processes and the calculation of the corresponding persistence constant 
using Pfaffian structure is the main content of the upcoming paper \cite{PS:19}.

In the paper, however, using the connection suggested in \cite{Fo:10} and explained before, 
we analyse the model of truncated orthogonal 
matrices and the corresponding "persistence" probability. Let $\ell\in\N$ and $O\left(N+\ell\right)$ be the group of
orthogonal $(N+\ell)\times(N+\ell)$ matrices equipped with the probability distribution given by normalized Haar measure. 
Decomposing $O \in O\left(N+\ell\right)$ according to
\begin{equation}\label{e:O_split}
	O = 
	\left(
	\begin{array}{cc}
	M_N & B_{N\times \ell} \\
	C_{\ell \times N} & D_{\ell}
	\end{array}
	\right),
\end{equation}
the main result of the paper can be formulated as follows.

\begin{theorem}\label{t:Main_Det}
	Let $\left\{M_{2n}\right\}$ be the ensemble of the $2n\times 2n$
	top left minor of the orthogonal matrices of size $\left(2n+\ell\right)\times\left(2n+\ell\right)$
	chosen uniformly (with respect to Haar measure) at random. Then the "persistence" probability
	\begin{equation}\label{e:Pers_det}
		p^{\left(\ell\right)}_{2n}:=\P{M_{2n} \text{ has no real eigenvalues }}
		= \det\left(I_n - \mathcal{D}^{\left(\ell\right)}_n 
		H^{\left(\ell\right)}_n \mathcal{D}^{\left(\ell\right)}_n\right),
	\end{equation}
	is a determinant given in terms of the $n\times n$ Hankel matrix 
	\begin{equation*}
		\big(H^{\left(\ell\right)}_n\big)_{p,q}= 
		\frac{B\left(p+q+1/2,\ell\right)}{2^{\ell-1}\Gamma^2\left(\frac{\ell}{2}\right)},
		\quad p,q = \overline{0,n-1},
	\end{equation*}	
	where $B$ stands for the Beta-function, and the $n\times n$ diagonal matrix $\mathcal{D}^{\left(\ell\right)}$ with diagonal elements
	\begin{equation}\label{e:D_nl}
		\big(\mathcal{D}^{\left(\ell\right)}_n\big)_{p,p}
		= \sqrt{\frac{\Gamma\left(2p+\ell\right)}{\Gamma\left(2p+1\right)}},
		\quad p=\overline{0,n-1}.
	\end{equation}
\end{theorem}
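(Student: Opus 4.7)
The approach I would take combines the Pfaffian point-process structure of the real eigenvalues of $M_{2n}$ with the parity symmetry of the weight $(1-x^2)^{\ell-1}$ to collapse a Fredholm Pfaffian into the claimed $n\times n$ determinant.

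First, I would start from the joint eigenvalue distribution of $M_{2n}$ obtained from normalised Haar measure on $O(2n+\ell)$ via the decomposition \eqref{e:O_split}, and follow \cite{KSZ:10} and \cite{Fo:10} to establish that the real eigenvalues form a Pfaffian point process on $(-1,1)$ with a $2\times 2$ matrix-valued kernel $\mathcal K_\ell(x,y)$ built from skew-orthogonal polynomials attached to the symmetric weight $w_\ell(x)=(1-x^2)^{\ell-1}$ (essentially Gegenbauer polynomials of a parameter related to $\ell$). The persistence probability is then the gap probability
$$p^{(\ell)}_{2n} = \mathrm{Pf}\bigl(J - \mathcal K_\ell\bigr)_{L^2(-1,1)},$$
and because $\mathcal K_\ell$ has finite rank, its range being contained in polynomials of degree at most $2n-1$, this Fredholm Pfaffian collapses to a finite $2n\times 2n$ Pfaffian $\mathrm{Pf}(J_{2n}-A)$ with $A$ the matrix of moment integrals of the kernel components against a chosen basis.

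The key reduction then exploits the parity $w_\ell(-x)=w_\ell(x)$. Choosing the monomial basis split into even-degree part $\{1,x^2,\ldots,x^{2n-2}\}$ and odd-degree part $\{x,x^3,\ldots,x^{2n-1}\}$, the matrix $A$ becomes block antidiagonal after a permutation, and the identity
$$\mathrm{Pf}\begin{pmatrix} 0 & I_n - M \\ -(I_n-M)^{T} & 0 \end{pmatrix} = \det(I_n - M)$$
converts the $2n$-dimensional Pfaffian into the $n$-dimensional determinant $\det(I_n - M)$. The entries of $M$ reduce, via $\int_{-1}^1 x^{2(p+q)}(1-x^2)^{\ell-1}\,dx = B(p+q+1/2,\ell)$, to the Hankel form $H^{(\ell)}_n$. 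The diagonal conjugation $\mathcal D^{(\ell)}_n$ with entries $\sqrt{\Gamma(2p+\ell)/\Gamma(2p+1)}$ would then arise from the $L^2(w_\ell)$-normalisations of the skew-orthogonal polynomials used to build $\mathcal K_\ell$, and the prefactor $1/(2^{\ell-1}\Gamma^2(\ell/2))$ is collected via the Legendre duplication formula.

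The main obstacle I anticipate lies in the first step: making the Fredholm-Pfaffian formulation rigorous and, after projection to the finite-dimensional range of $\mathcal K_\ell$, extracting the \emph{precise} $2n\times 2n$ matrix in a basis adapted to the parity argument so as to obtain the clean $I_n - \mathcal D^{(\ell)}_n H^{(\ell)}_n \mathcal D^{(\ell)}_n$ structure rather than an opaque $\det(A-B)$. A Schur-complement manipulation on the intermediate Pfaffian may be needed to separate the identity contribution (coming from the diagonal skew-orthogonal normalisations) from the remaining kernel contribution (producing the Hankel matrix). Once this reduction is in place, the identification of the Hankel moments and the diagonal factors is a direct computation using Beta-function evaluations and standard Gamma-function identities.
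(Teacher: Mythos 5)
Your plan reproduces the right architecture (Pfaffian point process, skew-orthogonal polynomials, parity reduction of a Pfaffian to a determinant, Beta-function moments), but the decisive step — the one you yourself flag as the ``main obstacle'' — is exactly the step that constitutes the proof, and it is left unresolved. The paper does not compute a Fredholm Pfaffian of the real-eigenvalue process and then hunt for a Schur complement. Instead it applies the Borodin--Sinclair averaging formula \eqref{e:average} over \emph{all} eigenvalues with the test function $f=1-\chi_{\R}$, which writes $p_{2n}^{(\ell)}$ directly as a ratio of two $2n\times 2n$ Pfaffians: the numerator built from the skew products $\left(\pi_i,\pi_j\right)^{(\ell)}-\left(\pi_i,\pi_j\right)^{(\ell)}_{\R}$ and the denominator from $\left(\pi_i,\pi_j\right)^{(\ell)}$. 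Because the chosen polynomials are skew-orthogonal for the \emph{full} skew product, the denominator matrix is block-diagonal, both matrices have checkerboard parity structure, and the ratio collapses at once to $\det\left(I_n-(\cdot)\right)$ with the real-line part supplying the Hankel matrix. The identity contribution in $I_n-\mathcal D_n^{(\ell)}H_n^{(\ell)}\mathcal D_n^{(\ell)}$ thus comes from the normalisations $\left(\pi_{2k},\pi_{2k+1}\right)^{(\ell)}=\ell!\,(2k)!/(2k+\ell)!$ of the full skew product, which necessarily involves the \emph{complex}-plane part of the weight; it cannot be recovered by working only with an $L^2(-1,1)$ pairing as your plan suggests.

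Two further inaccuracies would derail an execution of your plan. First, the skew-orthogonal polynomials are not Gegenbauer polynomials for the weight $(1-x^2)^{\ell-1}$ on $(-1,1)$: they are Forrester's explicit family $\pi_{2k}(z)=z^{2k}$, $\pi_{2k+1}(z)=z^{2k+1}-\tfrac{2k}{2k+\ell}z^{2k-1}$, skew-orthogonal with respect to a two-part skew product whose complex component carries the nontrivial incomplete-beta weight \eqref{def:omega:ell}; the weight $(1-x^2)^{\ell-1}$ only emerges a posteriori in the Hankel entries as the product of the two factors $(1-x^2)^{\ell/2-1}$ and $(1-x^2)^{\ell/2}$ appearing in $\left(\pi_{2j},\pi_{2k+1}\right)^{(\ell)}_{\R}$. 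Second, ``follow \cite{KSZ:10}'' is not enough for the joint density: for fixed $\ell\le N$ the induced measure is singular, and a substantial portion of the paper (the Schur-decomposition computation of Lemma~\ref{l:CJPD}) is devoted to deriving and correcting that density before any Pfaffian machinery can be invoked. As it stands, your proposal is a plausible outline whose central identity is asserted rather than proved.
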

\begin{remark}
	For $\ell = 1$ the above expression further simplifies to 
	\begin{equation}\label{e:Gap_Hilbert}
	p_{2n}^{\left(1\right)} = \det\big(I_n - H^{\left(1\right)}_n\big),
	\text{ where } \big(H^{\left(1\right)}_n\big)_{p,q} = \frac{1}{\pi\left(p+q+1/2\right)},
	\end{equation}
	$p,q = \overline{0,n-1}$.
	From now on we use the superscript $^{\left(\ell\right)}$ only when $\ell \neq 1$, otherwise 
	we drop it.
\end{remark}
In fact our method allows us to find not only the probability of having no real eigenvalues,
but also the moment generating function of the number of real eigenvalues.
\begin{proposition}\label{p:MGF}
	Let $\mathcal{N}^{\left(\ell\right)}_{n}\left(\R\right)$ denote a number of real 
	eigenvalues of the random matrix $M_{2n}$ taken from the ensemble defined
	above. Then for any $s \in \C$
	\begin{equation}\label{e:MGF}
		\Big\langle e^{s\mathcal{N}^{\left(\ell\right)}_n(\R)}\Big\rangle_{M_{2n}} = 
		\det\left(I_n - \left(1-e^{2s}\right)
		\mathcal{D}^{\left(\ell\right)}_n H^{\left(\ell\right)}_n \mathcal{D}^{\left(\ell\right)}_n\right),
	\end{equation}
where $\big\langle\,\cdot\,\big\rangle_{M_{2n}}$ denotes the expectation value with respect to the ensemble.
\end{proposition}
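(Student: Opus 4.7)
}
The first observation is that the complex eigenvalues of the real matrix $M_{2n}$ come in conjugate pairs, so $\mathcal{N}^{(\ell)}_n(\R)$ has the same parity as $2n$ and is therefore always even. Setting $t = e^s$, the moment generating function becomes a probability generating function,
\[
\bigl\langle e^{s\mathcal{N}^{(\ell)}_n(\R)}\bigr\rangle_{M_{2n}} = \bigl\langle t^{\mathcal{N}^{(\ell)}_n(\R)}\bigr\rangle_{M_{2n}},
\]
and the right-hand side of~\eqref{e:MGF} reads $\det(I_n - (1-t^2)\mathcal{D}^{(\ell)}_n H^{(\ell)}_n \mathcal{D}^{(\ell)}_n)$. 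This has exactly the shape of the probability generating function of the number of points of a determinantal point process on $n$ sites with kernel $\mathcal{D}^{(\ell)}_n H^{(\ell)}_n \mathcal{D}^{(\ell)}_n$, evaluated at the parameter $t^2$. The specialization $t = 0$ recovers the persistence probability $p^{(\ell)}_{2n}$ of Theorem~\ref{t:Main_Det}, so the natural approach is to rerun the derivation of that theorem with $t$ kept free.

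The plan is as follows. Because the eigenvalues of $M_{2n}$ form a Pfaffian point process (\cite{KSZ:10}, or directly from the Haar measure on $O(2n+\ell)$), the probability generating function of $\mathcal{N}^{(\ell)}_n(\R)$ is a Fredholm Pfaffian
\[
\bigl\langle t^{\mathcal{N}^{(\ell)}_n(\R)}\bigr\rangle_{M_{2n}} = \Pf\bigl(J - (1 - t)\mathcal{K}_{r}\bigr),
\]
where $\mathcal{K}_{r}$ is the $2\times 2$ matrix kernel of the PPP restricted to $\R\times\R$ and $J$ is the standard symplectic form. The proof of Theorem~\ref{t:Main_Det} handles the case $t = 0$: after identifying $\mathcal{K}_{r}$ explicitly in terms of polynomials orthogonal under Haar measure on $O(2n+\ell)$, one reduces $\Pf(J - \mathcal{K}_r)$ to the $n\times n$ Fredholm determinant $\det(I_n - \mathcal{D}^{(\ell)}_n H^{(\ell)}_n \mathcal{D}^{(\ell)}_n)$ via de Bruijn-type identities combined with the Pfaffian-to-determinant relation $\Pf^2 = \det$ applied to the specific block structure of $\mathcal{K}_{r}$. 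Keeping $w := 1 - t$ free throughout the same reduction yields $\det(I_n - w(2-w)\mathcal{D}^{(\ell)}_n H^{(\ell)}_n \mathcal{D}^{(\ell)}_n)$, and since $w(2-w) = (1-t)(1+t) = 1 - t^2 = 1 - e^{2s}$, this is exactly~\eqref{e:MGF}.

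The principal technical obstacle is verifying the central operator identity
\[
\Pf\bigl(J - w\mathcal{K}_{r}\bigr) = \det\bigl(I_n - w(2-w)\mathcal{D}^{(\ell)}_n H^{(\ell)}_n \mathcal{D}^{(\ell)}_n\bigr),
\]
which at $w = 1$ reduces to Theorem~\ref{t:Main_Det} and for general $w$ encodes the DPP structure of the count of real eigenvalue pairs. The natural strategy is an explicit block LDL-type factorization of $J - w\mathcal{K}_{r}$ that isolates the quadratic dependence $w(2-w)$ from the $2\times 2$ block structure of the Pfaffian kernel; the emergence of $(1-t)(1+t)$ rather than $(1-t)^2$ is a signature of this factorization and is consistent with $\mathcal{N}^{(\ell)}_n(\R)$ being always even. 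This factorization is essentially already present in the proof of Theorem~\ref{t:Main_Det} at $w = 1$ and extends to general $w$ by carrying the scalar parameter through the reduction; once this is established, the proof parallels that of Theorem~\ref{t:Main_Det} line by line.
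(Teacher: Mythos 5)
Your overall strategy --- rerun the derivation of Theorem~\ref{t:Main_Det} with the counting parameter kept free --- is the right one and your target identity is correct, but the step you yourself flag as the ``principal technical obstacle'' is exactly the content of the proposition, and you do not actually prove it. In your formulation the parameter $w=1-t$ enters the Fredholm Pfaffian $\Pf\left(J-w\mathcal{K}_r\right)$ \emph{linearly}, and the whole question is why the reduction to an $n\times n$ determinant turns this into the \emph{quadratic} factor $w(2-w)=1-e^{2s}$. Your justification --- that the factorization ``is essentially already present in the proof of Theorem~\ref{t:Main_Det} at $w=1$ and extends by carrying the scalar parameter through'' --- is not an argument: at $w=1$ the quantities $w$, $w^2$ and $w(2-w)$ are indistinguishable, so nothing in the $w=1$ computation determines which of them appears for general $w$. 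The parity observation ($\mathcal{N}^{\left(\ell\right)}_n(\R)$ is even, hence the generating function is a polynomial in $t^2$) combined with the known values at $t=0$ and $t=1$ is a consistency check, not a derivation of the determinant formula.

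The gap closes immediately if you work, as the paper does, with the averaging formula \eqref{e:average} rather than with a Fredholm Pfaffian. Take the test function $g(z)=1-(1-e^s)\chi_{\R}(z)$, so that $\prod_j g(\xi_j)=e^{s\mathcal{N}^{\left(\ell\right)}_n(\R)}$. The matrix elements $U_{i,j}(f)$ in \eqref{e:average} are \emph{bilinear} in $f$ --- the test function multiplies both the polynomial and its $\varepsilon$-transform --- so on the real part of the skew product the weight $g\rvert_{\R}=e^s$ enters squared, giving
\begin{equation*}
U_{i,j}(g)=\left(\pi_i,\pi_j\right)^{\left(\ell\right)}_{\C}+e^{2s}\left(\pi_i,\pi_j\right)^{\left(\ell\right)}_{\R}
=\left(\pi_i,\pi_j\right)^{\left(\ell\right)}-\left(1-e^{2s}\right)\left(\pi_i,\pi_j\right)^{\left(\ell\right)}_{\R}.
\end{equation*}
This is where $1-e^{2s}$ comes from, with no factorization needed. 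From there the proof is verbatim that of Theorem~\ref{t:Main_Det}: Proposition~\ref{p:checkboard} reduces the Pfaffians to $n\times n$ determinants and the diagonal conjugation produces $\mathcal{D}^{\left(\ell\right)}_n H^{\left(\ell\right)}_n \mathcal{D}^{\left(\ell\right)}_n$.
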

A similar result for the moment generating function of the number
of real eigenvalues for products of truncated orthogonal matrices was recently also obtained in \cite{FIK:18}.
The result was obtained in the regime of large truncations $\ell \geq N$
and is expressed in terms of Meijer G-functions, which makes its asymptotic analysis not feasible by our methods.
We are interested in the application of our result to the study of random Kac polynomials,
and therefore we stay in the so-called universality class of weak non-orthogonality (see review
\cite{FS:03} and references therein).
\begin{proposition}\label{p:all_real}
	Identity \eqref{e:MGF} gives access to the probability of all eigenvalues being real. 
	The result reads
	\begin{equation}\label{e:all_real}
	p_{2n,2n}^{\left(\ell\right)}=
	\frac{G\left(n+\frac{\ell}{2}\right)G\left(n+\frac{\ell+1}{2}\right)G\left(n+\ell\right)
		G\left(n+\ell-\frac{1}{2}\right)}{
		\Gamma^{2n}\left(\frac{\ell}{2}\right)G\left(\frac{\ell}{2}\right)
		G\left(\frac{\ell+1}{2}\right)G\left(\ell\right)G\left(2n+\ell-\frac{1}{2}\right)},
	\end{equation}
	where $G$ is the Barnes $G$-function and $p^{\left(\ell\right)}_{2n,2n}$ is defined 
	similar to \eqref{eq:probPN2k}.
	For $n\to\infty$ and $\ell$ either growing with $n$ or being fixed the probability  
	of pure real spectrum has the asymptotic expansion
	\begin{equation}\label{e:all_real_1}
		\lim\limits_{n\to\infty} \frac{\log p_{2n,2n}^{\left(\ell\right)}}{n^2} = 
		\begin{cases}
			-2\log 2, & \ell/n\to 0, \\
			\phi\left(\alpha\right), 
			& \ell/n = \alpha \in \left(0,\infty\right), \\
			-\log 2, & \ell/n \to\infty.
		\end{cases}
	\end{equation}
	where
	\begin{equation*}
		\phi\left(\alpha\right) = -\log 2 - \alpha\left(1+\frac{3}{4}\alpha\right)\log\alpha
		-\frac{\alpha }{2}
		+\left(1+\alpha\right)^2\log\left(1+\alpha\right)
		- \left(1+\frac{\alpha}{2}\right)^2\log\left(2+\alpha\right),
	\end{equation*}
	with $\phi\left(0\right) = -2\log 2$ and $\phi\left(\infty\right) = -\log 2$.

\end{proposition}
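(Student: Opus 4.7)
The plan is to extract $p_{2n,2n}^{(\ell)}$ from the determinantal MGF of Proposition~\ref{p:MGF}, evaluate the resulting Hankel determinant as a Selberg integral, and then read off the asymptotics from the Barnes-$G$ expansion.

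Since $\mathcal N_n^{(\ell)}(\R)$ takes only even values in $\{0,2,\dots,2n\}$, the MGF $\langle e^{s\mathcal N}\rangle$ is a polynomial in $e^{2s}$ of degree $n$, and $p_{2n,2n}^{(\ell)}$ is its leading coefficient. Setting $A_n:=\mathcal D_n^{(\ell)}H_n^{(\ell)}\mathcal D_n^{(\ell)}$ and using $\det(I_n-(1-e^{2s})A_n)=\sum_{k=0}^n(-(1-e^{2s}))^k e_k(A_n)$, the coefficient of $e^{2ns}$ on the right-hand side of \eqref{e:MGF} is $\det A_n$, giving
\begin{equation*}
    p_{2n,2n}^{(\ell)}=\bigl(\det\mathcal D_n^{(\ell)}\bigr)^2\det H_n^{(\ell)}=\det H_n^{(\ell)}\prod_{p=0}^{n-1}\frac{\Gamma(2p+\ell)}{\Gamma(2p+1)}.
\end{equation*}

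To evaluate $\det H_n^{(\ell)}$ I use the Beta-integral $B(p+q+\tfrac12,\ell)=\int_0^1 x^{p+q-1/2}(1-x)^{\ell-1}dx$ to identify $H_n^{(\ell)}$ as a rescaled moment matrix; Andr\'eief's identity then rewrites the determinant as a Selberg integral with parameters $(\alpha,\beta,\gamma)=(\tfrac12,\ell,1)$. Its classical evaluation produces a product of $\Gamma$-values which telescopes into $G(n+\tfrac12)G(n+\ell)G(n+\ell-\tfrac12)G(n+1)/[G(\tfrac12)G(\ell)G(2n+\ell-\tfrac12)]$, up to the prefactor $(2^{\ell-1}\Gamma^2(\ell/2))^{-n}$ coming from the normalisation of the weight. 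Independently, the Gauss duplication formula $\Gamma(2z)=2^{2z-1}\pi^{-1/2}\Gamma(z)\Gamma(z+\tfrac12)$ applied to both $\Gamma(2p+\ell)$ and $\Gamma(2p+1)$ converts $\bigl(\det\mathcal D_n^{(\ell)}\bigr)^2$ into $2^{n(\ell-1)}\,G(\tfrac12)G(n+\ell/2)G(n+(\ell+1)/2)/[G(\ell/2)G((\ell+1)/2)G(n+\tfrac12)G(n+1)]$. Multiplying the two expressions, the factors $G(\tfrac12)$, $G(n+\tfrac12)$, $G(n+1)$ and the two powers $2^{n(\ell-1)}$ cancel, leaving exactly \eqref{e:all_real}.

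To obtain the asymptotics \eqref{e:all_real_1} I apply the standard Barnes-$G$ expansion $\log G(z+1)=\tfrac{z^2}{2}\log z-\tfrac{3z^2}{4}+\tfrac{z}{2}\log(2\pi)-\tfrac{1}{12}\log z+\zeta'(-1)+O(z^{-2})$, together with Stirling for $\Gamma^{2n}(\ell/2)$, to each factor of \eqref{e:all_real}. For fixed $\ell$, the four numerator $G$'s have argument $n+O(1)$ and $G(2n+\ell-\tfrac12)$ has argument $2n+O(1)$; the $n^2\log n$ pieces cancel and the surviving $n^2$-coefficient simplifies to $4(-\tfrac34)+3-2\log 2=-2\log 2$. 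For $\ell=\alpha n$, each Barnes-$G$ argument is linear in $n$, so every $G$-factor contributes at order $n^2$, and $\Gamma^{2n}(\alpha n/2)$ contributes the extra $\alpha n^2[\log(\alpha/2)-1]$ via Stirling; the $n^2\log n$ coefficients cancel (a built-in consistency check that works out), and the surviving $n^2$-coefficient collapses to the explicit combination $\phi(\alpha)$. The regime $\ell/n\to\infty$ follows by redoing the expansion under $\ell\gg n$ (equivalently, by $\lim_{\alpha\to\infty}\phi(\alpha)=-\log 2$).

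The main obstacle is the bookkeeping of the $2^{\bullet}$, $\pi^{\bullet}$ and Barnes-$G$ prefactors in the evaluation step: a stray $2^{n\ell}$ or $\pi^{n/2}$ would shift the $n^2$-coefficient in \eqref{e:all_real_1} by a nonzero constant. A convenient sanity check is the Hilbert case $\ell=1$, where $\det H_n^{(1)}$ reduces to the classical Hilbert-matrix determinant $\prod_{k=0}^{n-1}(k!)^4/((2k)!(2k+1)!)$ (up to our $\pi^{-n}$ normalisation), whose Barnes-$G$ rewriting must agree with the right-hand side of \eqref{e:all_real} at $\ell=1$ and yields the well-known $p_{2n,2n}^{(1)}\sim 2^{-2n^2}$ behaviour.
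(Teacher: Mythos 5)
Your proposal matches the paper's proof essentially step for step: you extract $\det\left(\mathcal{D}^{\left(\ell\right)}_n H^{\left(\ell\right)}_n\mathcal{D}^{\left(\ell\right)}_n\right)$ as the top coefficient of the MGF viewed as a polynomial in $e^{2s}$, evaluate $\det H^{\left(\ell\right)}_n$ via the Andr\'eief/Selberg identity for the weight $x^{-1/2}\left(1-x\right)^{\ell-1}$ on $\left[0,1\right]$, recombine with $\big(\det\mathcal{D}^{\left(\ell\right)}_n\big)^2$ through the duplication formula to reach \eqref{e:all_real}, and read off the three regimes from the Barnes-$G$ expansion, exactly as the paper does. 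The one caveat is that for $\ell/n\to\infty$ the shortcut $\lim_{\alpha\to\infty}\phi\left(\alpha\right)=-\log 2$ is not by itself a proof (the error terms in the fixed-$\alpha$ expansion are not uniform in $\ell$), so the branch of your parenthetical that redoes the expansion in $\beta_n=n/\ell$ is the one to keep --- which is precisely what the paper carries out.
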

Formula \eqref{e:all_real} was previously obtained in \cite[Cor. 2]{FK:18} by a different method.
Our main result here is the asymptotics \eqref{e:all_real_1} which clearly gives an interpolation between the
weak non-orthogonality class (small $\ell$) and the Real Ginibre Ensemble ($\ell \gg n$) in terms of the
 probability $p_{2n,2n}^{\left(\ell\right)}$. The corresponding result for the Real Ginibre ensemble can be found in
\cite[Cor. 7.1]{Ed:97}.

The second main result of this paper is the asymptotic analysis of the probability
\eqref{e:Gap_Hilbert}. 
\begin{theorem}\label{t:det_main}
	The asymptotics
	\begin{equation}\label{e:asympt_det}
	\log \det\left(I_n-H_n\right) = -2\theta\log n
	+ o\left(\log n\right)
	\end{equation}
	holds as $n\to\infty$ with
	\begin{equation}\label{decayexp}
	\theta= -\frac{1}{2\pi} \int\limits_{0}^{\infty}
	\log\left(1-\sech(\pi u)\right)\text d {u}= \frac 3 {16}.
	\end{equation}
	In particular this implies that
	\begin{equation}\label{asymptProbab}
	\lim\limits_{n\to \infty}\frac{\log p_{2n}}{\log n} =-\frac 3 {8}.
	\end{equation}
	\end{theorem}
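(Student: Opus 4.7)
The strategy is to expand the logarithm and analyze the traces term by term. Since $0 \leq H_n < I_n$ for every $n$, one has
\begin{equation*}
\log\det(I_n-H_n) = -\sum_{k\geq 1}\Tr H_n^k/k.
\end{equation*}
I will show that, for each fixed $k$,
\begin{equation*}
\Tr H_n^k = \alpha_k \log n + O(1), \qquad \alpha_k := \frac{1}{\pi}\int_0^\infty \sech^k(\pi u)\,du.
\end{equation*}
Granted this, together with a uniform-in-$k$ control that allows interchanging $\sum_k$ with the $n\to\infty$ limit, the identity
\begin{equation*}
\sum_{k\geq 1}\frac{\alpha_k}{k} = -\frac{1}{\pi}\int_0^\infty \log(1-\sech(\pi u))\,du = 2\theta
\end{equation*}
(first equality by Fubini and $\sum x^k/k = -\log(1-x)$; second by \eqref{decayexp}) produces $\log\det(I_n-H_n) = -2\theta\log n + o(\log n)$ and hence the theorem.

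For the trace asymptotic, use the Gram representation $H_n = J_n^* J_n$, where $J_n:\mathbb R^n \to L^2\!\left([0,1], dx/(\pi\sqrt x)\right)$ sends $e_p \mapsto x^p$ (since $\int_0^1 x^{p+q}\,dx/(\pi\sqrt x) = 1/(\pi(p+q+1/2))$). A cyclic trace computation for $(J_n J_n^*)^k$ then yields
\begin{equation*}
\Tr H_n^k = \frac{1}{\pi^k}\int_{[0,1]^k}\prod_{i=1}^k \frac{1-(x_ix_{i+1})^n}{(1-x_ix_{i+1})\sqrt{x_i}}\,dx_i, \qquad x_{k+1}\equiv x_1.
\end{equation*}
Substituting $x_i = e^{-s_i}$ makes the integrand bounded away from $s=0$, so the only source of growth with $n$ lies near the corner $s_i\to 0$, where the integrand is equivalent to $\prod_i [\pi(s_i+s_{i+1})]^{-1}$. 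A polar scaling $s_i = r\omega_i$ with $\omega \in \sigma_k:=\{\omega_i\geq 0,\sum\omega_i = 1\}$ separates the radial part $\int_{\asymp 1/n}^{\asymp 1}dr/r = \log n + O(1)$ from the angular integral
\begin{equation*}
I_k := \int_{\sigma_k}\frac{d\omega}{\prod_{i=1}^k(\omega_i+\omega_{i+1})}.
\end{equation*}
Direct evaluation gives $I_1 = 1/2$, $I_2 = 1$, $I_3 = \pi^2/4$, matching $I_k = \pi^k\alpha_k$ (since $\int_0^\infty \sech^k(\pi u)\,du = 1/2, 1/\pi, 1/4$ for $k=1,2,3$); the general identity $I_k = \pi^k\alpha_k$ can be derived by a Mellin--Barnes / residue computation, or equivalently by identifying the limiting Hankel operator on $L^2(\mathbb R_+)$ with kernel $e^{(s+t)/4}/[2\pi\sinh((s+t)/2)]$ as having continuous spectrum $\{\sech(\pi u):u\in\mathbb R\}$ via the classical Mellin diagonalization of Hilbert-type operators.

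The chief obstacle is justifying the interchange of $\sum_k$ and $n\to\infty$, since the per-$k$ errors and the traces themselves grow with $n$ while $\|H_n\|\to 1$. My plan is to split the series at $k = K(n)\to\infty$ slowly and to control the tail $\sum_{k>K(n)}\Tr H_n^k/k$ via $\Tr H_n^k \leq n\|H_n\|^k$ combined with the spectral-gap estimate $\|H_n\| \leq 1 - c/\log^2 n$; the latter is itself a consequence of $\lambda_1(H_n) = \sech(\pi u_1^{(n)})$ with $u_1^{(n)}\asymp 1/\log n$, as forced by the $\sech$-parametrization of the limiting spectrum. Once this is in place, the evaluation of $\theta$ reduces to a dilogarithm computation: substituting $t = e^{-\pi u}$ gives $1-\sech(\pi u) = (1-t)^2/(1+t^2)$ and
\begin{equation*}
\int_0^\infty \log(1-\sech(\pi u))\,du = \frac{1}{\pi}\int_0^1 \frac{2\log(1-t)-\log(1+t^2)}{t}\,dt = \frac{1}{\pi}\left(-\frac{\pi^2}{3}-\frac{\pi^2}{24}\right) = -\frac{3\pi}{8},
\end{equation*}
using $\int_0^1 \log(1-t)/t\,dt = -\pi^2/6$ and (after $s=t^2$) $\int_0^1 \log(1+t^2)/t\,dt = \pi^2/24$. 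Hence $\theta = 3/16$, as required.
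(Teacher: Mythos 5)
Your skeleton coincides with the paper's: expand $\log\det(I_n-H_n)=-\sum_{k\ge1}\Tr H_n^k/k$ (valid since $H_n$ is a Gram matrix with $\|H_n\|<1$ --- note that even $\|H_n\|<1$ requires an argument, which the paper supplies via the numerical range and the purely a.c.\ spectrum of $H_\infty$), use the per-$k$ asymptotics $\Tr H_n^k\sim \mu_k\log n$ with $\mu_k=\frac1\pi\int_0^\infty\sech^k(\pi u)\,du$ (this is Widom's theorem, which the paper cites; your Gram/corner-scaling sketch is plausible but the key identity $I_k=\pi^k\mu_k$ is only verified for $k\le3$ and asserted in general), and then sum over $k$. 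The upper bound $\limsup_n\log D_n/\log n\le-\sum_k\mu_k/k$ does follow from truncating the negative-term series, exactly as in the paper. Your closed-form evaluation $\int_0^\infty\log\left(1-\sech(\pi u)\right)\de{u}=-3\pi/8$ via dilogarithms is correct (and is a computation the paper does not spell out).

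The genuine gap is precisely where you locate it: the lower bound, i.e.\ the interchange of $\sum_k$ with $n\to\infty$. Your plan rests on the spectral-gap estimate $\|H_n\|\le 1-c/\log^2 n$, justified only by the heuristic that $\lambda_1(H_n)=\sech(\pi u_1^{(n)})$ with $u_1^{(n)}\asymp1/\log n$ ``as forced by the $\sech$-parametrization of the limiting spectrum.'' Nothing is forced: the eigenvalues of a finite section are not a discretization of the (purely absolutely continuous) spectrum of $H_\infty$, and this quantitative gap is a nontrivial claim you would have to prove. Moreover, even granting it, your tail estimate $\Tr H_n^k\le n\|H_n\|^k$ only becomes useful for $K(n)\gtrsim\log^3 n$, and then the main sum $\sum_{k\le K(n)}\Tr H_n^k/k$ requires the $O(1)$ error in $\Tr H_n^k=\mu_k\log n+O(1)$ to be controlled \emph{uniformly} in $k$ up to $\log^3 n$; Widom's asymptotics (and your corner analysis) are for fixed $k$ only, with implied constants a priori depending on $k$. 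The paper circumvents both issues by a different mechanism: entrywise positivity gives $\Tr H_n^m\le\Tr\bigl(1_nH_\infty^m1_n\bigr)$, one splits by spectral projections $1_{>\eps}(H_\infty)$ and $1_{\le\eps}(H_\infty)$ of the \emph{infinite} operator (whose norm-$1$ tail is harmless because its spectral measure near $1$ is thin), and the main piece is evaluated through the explicit diagonalization of $H_\infty$ from \cite{KS:16} together with asymptotics of the ${}_4F_3$ eigenfunctions. To complete your route you would need to prove the quantitative spectral gap \emph{and} uniform trace errors, or else adopt a tail-control device of the paper's type.
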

	
	\begin{remarks}
	\item The constant $3/8$ was recently also encountered in \cite{CH:17} in the context of persistence probability
	of so-called Peron polynomials. This is a very intriguing coincidence because Kac polynomials
	are defined through random coefficients, while the probability distribution on the space of Perron 
	polynomials is defined in a far more complicated way.
	\item
		The relation of truncated orthogonal matrices to Kac polynomials in the case 
		$\ell=1$ explained earlier on strongly indicates that the decay exponent 
		$4\theta_{\text{Kac}}$ in  \eqref{eq:probPN2k2} for the persistence probability 
		of Kac polynomials is given by
		\begin{equation}
		\theta_{\text{Kac}}= \frac 3 {16}.
		\end{equation}
		as well and therefore $4\theta_{\text{Kac}}= 12/16=3/4$.
		 This will be content of the upcoming paper \cite{PS:19}.
		Also note that introducing the coefficient in \eqref{e:asympt_det} as "$2\theta$"
		is justified by the following observation: All eigenvalues of the random matrix 
		$M_{2n}$ are positioned inside the unit disk and can model random roots 
		of a polynomial \eqref{e:Kac_pol} only within this domain. However, random roots of Kac 
		polynomial lying outside of the unit disk in the large $N$ limit
		are independent and equally distributed	(up to a transformation $z\to 1/z$) with those
		inside. Therefore, the persistence constant for Kac polynomials is expected to be twice as big 
		as the one for truncated orthogonal matrices.	
	\item
	Related asymptotics of the form \eqref{e:asympt_det} for $\det(I- \alpha H_n)$ 
	with $|\alpha|<1$ were proven in \cite{GF:18}. It is shown there that 
	\begin{equation*}
	\det(I- \alpha H_n)= -\frac 1 {2\pi^2} \big( \arcsin^2(\alpha)+\pi \arcsin(\alpha)\big)\log n + o(\log n). 
	\end{equation*}
	The proof works for a larger class of Hankel matrices but does not generalize to 
	$\alpha=1$ which we need in our case. This result yields that the moment generating function
	of the number of real eigenvalues can be written as
	\begin{equation*}
		\left\langle e^{s\mathcal{N}_n}\right\rangle = \left(\frac{1}{8} - \frac{2}{\pi^2}
		\left[\arccos\frac{e^s}{\sqrt{2}}\right]^2\right)
		\log n + o\left(\log n\right), \quad s\in \left(-\infty,\frac{\log2}{2}\right].
	\end{equation*}
	To conclude, we would like to mention that the same expression previously 
	appeared in \cite[Eq. (7)]{DHP:96} when studying the problem of the number of
	persistent spins in the Ising model on the half-line.
	\end{remarks}

The matrix $H_n$ is a variation of the Hilbert matrix, known since the end of the
$19^{th}$ century. Its spectral properties are well studied but, to the best of our knowledge,
the known results do not allow us to compute the determinant we are interested in.
The determinant can also be considered in the context of so-called
Toeplitz\,$\pm$\,Hankel determinants. These determinants are of independent interest where 
we refer to the review \cite{DIK:13} and to
\cite{DIK:11, BE:01, BE:02, BE:09, BE:17} for recent progress in that area. The results in the
 mentioned papers solely deal with Toeplitz\,+\,Hankel determinants of particular forms,
more precisely of Toeplitz and Hankel matrices having either the same symbol or symbols which differ by
just a factor $e^{\pm it}$. Our result corresponds to symbols
\begin{equation*}
	\sigma^T\left(e^{it}\right) \equiv 1, \quad 
	\sigma^H\left(e^{it}\right) = i e^{-it/2},
\end{equation*}
where the last one has a jump discontinuity at $t=0$.

The paper is organised as follows. In Section~\ref{s:RMT} we prove
Theorem~\ref{t:Main_Det} and derive \eqref{e:MGF}~-~\eqref{e:all_real_1}. 
The proof of Theorem~\ref{t:det_main} is the main
content of Section~\ref{s:Det}. The proofs of a number of auxilliary results is deferred to Sections~\ref {pf:lemma1}--\ref{pf:lemma333}.
Finally, in Section~\ref{s:OpenProblems} we discuss some open problems and conjectures in the scope of our 
interests.

\textbf{Acknowledgement:} The authors would like to thank Gregory Schehr 
for bringing the problem to their attention and stimulating this research
by providing links to other yet unsolved problems. We
are very grateful to Emilio Fedele and Alexander Pushnitski for numerous 
fruitful discussions on modern progress of 
Hankel matrices and sharing their unpublished results
together with the correction to the result of \cite[Thm. 4.3]{Wi:66} 
(see Lemma~\ref{l:moments}). This research was partially supported by ERC starting 
grant SPECTRUM (639305) (MG) and by EPRSC Grant No. EP/N009436/1 (MP).

\section{Ensemble of truncated orthogonal matrices}\label{s:RMT}
In this section we give the details for the derivation of \eqref{e:Pers_det}. 
First we find the joint probability distribution
function for the eigenvalues of $M_{2n}$. It was previously calculated with a mistake in the coefficients in
\cite{KSZ:10} and then corrected by a different method in \cite{Mays:11} in the case of "large" truncations.
For rank-one truncations ($\ell=1$) this was also calculated in \cite[Thm. 6.4, Rmk. 2]{KK:17}.
We follow the strategy of \cite{KSZ:10}, to obtain the result for small truncations,
and fill some holes and correct some minor mistakes in their proof. In the second part we note that the 
distribution fits in the framework of Point Processes Associated to Weights developed in \cite{Si:07, BS:09}.
This enables us to calculate the corresponding averages in terms of a family 
of skew-orthogonal polynomials found in \cite{Fo:10}.

\begin{proof}[Proof of Theorem~\ref{t:Main_Det}]
We start by considering the orthogonal group of size $N+\ell$ with the probability distribution
defined by the measure 
\begin{equation}\label{e:measure_O}
	\de{\mu\left(O\right)} = \frac{1}{v_{N+\ell}}\delta\left(O^TO-I_{N+\ell}\right)\De{O},
\end{equation}
where $\De{O} = \prod\limits_{i,j=1}^{N+\ell} O_{i,j}$ is the flat Lebesgue measure on $\R^{\left(N+\ell\right)^2}$
and 
\begin{equation}\label{e:measure_OO}
	v_{N+\ell} = \int_{\R^{(N+\ell)^2}} \delta\left(O^TO-I_{N+\ell}\right)\De{O}= \prod\limits_{j=1}^{N+\ell} 
	\frac{\pi^{j/2}}{\Gamma\left(j/2\right)},
\end{equation}	
is the volume of the orthogonal group
(see Proposition~\ref{p:v_N}). We decompose $O\in O\left(N+\ell\right)$ as (cf.
\eqref{e:O_split})
\begin{equation*}
	O = 
	\left(
	\begin{array}{cc}
	M_N & B_{N\times \ell} \\
	C_{\ell \times N} & D_{\ell}
	\end{array}
	\right).
\end{equation*}
Firstly, we compute the induced measure 
for the ensemble of the top-left minor $M_N$. We denote the space of all possible 
matrices $M_N$ by $\mathcal{O}^{\left(\ell\right)}_N$. 
One can integrate out (see Lemma~\ref{l:MC_distr} for details) the variables
$B_{N\times\ell}$ and $D_{\ell}$ to obtain the joint distribution of $M_N$
and $C_{N\times \ell}$. Then the probability distribution on 
$\mathcal{O}_N^{\left(\ell\right)}$ is written as
\begin{equation}\label{e:denMN}
\de{P\left(M_N\right)} =\frac{v_{\ell}}{v_{N+\ell}}  \Big( \int_{\R^{\ell N}}
\delta \left(M_N^TM_N+ C_{\ell \times N}^T C_{\ell \times N} - I_N\right)
\De{C_{\ell \times N}}\Big)\De{M_N},
\end{equation}	
where $\De{M_N}$ is Lebesgue measure on $\R^{N^2}$ and $\De{C_{\ell \times N}}$ is the Lebesgue measure on $\R^{N\ell}$. The constraint imposed by the $\delta$-function yields 
$M_N^TM_N = I_N - C^T_{\ell\times N}C_{\ell\times N}$, which implies that all eigenvalues 
of $M_N$ belong to the unit disk $\D=\left\{z\rvert \left|z\right|\leq 1\right\}$.
We are now left with the integration over $C_{\ell\times N}$. The dimension 
of the $\delta$-function inside the integrand is $\frac{N\left(N+1\right)}{2}$, i.e. the number of constraints imposed on the matrix in the integrand of the $\delta$-function,
and the number of independent variables in $C_{\ell \times N}^TC_{\ell \times N}$ 
is equal to $\frac{\ell \left(\ell+1\right)}{2}$. This means that in the case of $\ell\leq N$ the
integration over $C_{\ell\times N}$ will lead to a singular measure concentrated on the boundary
of the matrix ball $M_N^TM_N \leq I_N$ (see \cite{KSZ:10}). We concentrate on the case of fixed 
$\ell$ and $N \to \infty$, so the measure will be singular. If $\ell \geq N$ then the integration 
over $C_{\ell\times N}$ gives \cite{KSZ:10}
\begin{equation*}
	\de{P\left(M_N\right)} = \frac{v_{\ell}^2}{v_{N+\ell}v_{\ell-N}}
	\det\left(I-M_N^TM_N\right)_+^{\frac{\ell-N-1}{2}}\de{M_N},
\end{equation*}
where we write $M_+$ to denote $M$, when $M$ is positive definite and $0$ otherwise.

In the singular case, i.e. $\ell \leq N$, we calculate the eigenvalue distribution by following
\cite{KSZ:10} and fill missing details. 
We start
by noting that there are two types of eigenvalues for matrix $M_N$: they can be either real or come in pairs
of complex conjugate ones. We introduce disjoint subsets of $\mathcal{O}^{\ell}_N$
given by (see \cite{BS:09})
\begin{equation*}
\mathcal{X}_{L,M} = \left\{ \left. M_N \in \mathcal{O}^{\ell}_N \right| 
M_N \text{ has } L \text{ real and } M \text{
	pairs of complex eiv's}\right\},
\end{equation*}
for all pairs $L,M \in \Z_+$ such that $L+2M = N$. For all $M_{N} \in \mathcal{O}^{\ell}_N$
we define a lexicographical order of the $2M+L$ eigenvalues: 
\begin{equation*}
	\lambda_1 , \ldots, \lambda_L, x_1+iy_1,x_1-iy_1,\ldots,x_M+iy_M, x_M-iy_M,
\end{equation*}
where $\lambda_i, x_i,y_i\in \R$ and
\begin{equation*}
	\lambda_i\geq \lambda_{i+1},\,
	x_i > x_{i+1} \mbox{ or } x_i = x_{i+1} \mbox{ with } y_i>y_{i+1}.
\end{equation*}
In what follows, we restrict ourselves 
to even $N$ and therefore $L$ is even as well. In the case of odd $N$ we can obtain 
similar results, but the expressions would become lengthy and we omit this case.

It was suggested by Borodin 
and Sinclair in their seminal study \cite{BS:09} on the Real Ginibre ensemble that, instead of the full distribution, one should rather 
compute the distribution of the eigenvalues conditioned on
$M_N \in \mathcal{X}_{L,M}$. We follow this idea and prove
\begin{lemma}\label{l:CJPD}
	Let $M_N$ be the top left corner of an orthogonal matrix $O\in O(N+\ell)$ drawn randomly
	with respect to the measure \eqref{e:measure_O}. Then the density of the
	joint distribution of the ordered 
	eigenvalues of $M_N$ conditioned on having $L$ real eigenvalues 
	$\vec{\lambda}=\left(\lambda_1,\ldots,\lambda_L\right)$ and $M$ pairs of complex conjugate 
	eigenvalues $\vec{Z}=\left(z_1, \bar{z}_1,\ldots,z_M,\bar{z}_M\right)$ is
	\begin{equation*}
	p^{\left(L,M\right)}_{\ell}\left(\vec{\lambda},\vec{Z}\right) = 
	2^M \frac{v_{\ell}v_{N}}{v_{N+\ell}} \left(\frac{\left(2\pi\right)^{\ell}}{\ell!}\right)^{N/2}
	\left|\Delta\left(\vec{\lambda}\cup\vec{Z}\right)\right|
	\prod\limits_{j=1}^L w_{\ell}\left(\lambda_j\right)
	\prod\limits_{j=1}^{M} w_{\ell}\left(z_j\right)w_{\ell}\left(\bar z_j\right),
	\end{equation*}
	where $\Delta\left(\zeta_1,\ldots,\zeta_p\right) = \prod\limits_{1\leq i,j \leq p} 
	\left(\zeta_i-\zeta_j\right)$ is the Vandermonde determinant   and 
	$w_{\ell}\left(z\right) \equiv 0$ outside the unit disk $\D$ while for $z\in \D$
	\begin{equation}\label{def:omega:ell}		
	w^2_{\ell}\left(z\right) = 
	\begin{cases}
	\displaystyle\frac{1}{2\pi}\left|1-z\right|^{-1},&\ell=1,\\
	\displaystyle\frac{\ell\left(\ell-1\right)}{2\pi}
	\left|1-z^2\right|^{\ell-2}\int\limits_{
		\frac{2\left|\Im z\right|}{\left|1-z^2\right|}}^{1} \left(1-u^2\right)^{\frac{\ell-3}{2}}\de{u},
	&\ell\geq 2.
	\end{cases}
	\end{equation}
\end{lemma}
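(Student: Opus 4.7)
The plan is to follow the strategy of \cite{KSZ:10}, using a real Schur decomposition to diagonalise $M_N$ and then carefully integrate out the non-spectral parameters. Starting from the induced density \eqref{e:denMN}, on each component $\mathcal{X}_{L,M}$ I would write $M_N = QTQ^T$ with $Q \in O(N)/\mathcal{H}_{L,M}$, where $\mathcal{H}_{L,M}$ is the stabiliser of the canonical block pattern, and $T$ is block upper triangular with $L$ diagonal entries $\lambda_j$ followed by $M$ diagonal blocks of size $2\times 2$ having eigenvalues $z_j,\bar z_j$. A standard Jacobian calculation then gives
\begin{equation*}
\de{M_N} = 2^M\,|\Delta(\vec\lambda\cup\vec Z)|\,\de{\vec\lambda}\,\de{\vec Z}\,\de{S}\,\de{\mu_H(Q)},
\end{equation*}
where $S$ collects the strictly upper triangular parameters of $T$ (the entries above the diagonal blocks together with the residual block-internal parameters) and $\mu_H$ is the push-forward of the Haar measure.

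Next, the delta function $\delta(M_N^T M_N + C^T C - I_N)$ in \eqref{e:denMN} imposes $C^T C = Q(I - T^T T)Q^T$, which in particular forces $I - T^T T$ to be positive semi-definite of rank at most $\ell$ (since $C \in \R^{\ell\times N}$ has rank at most $\ell$). Using the bi-invariance of the Lebesgue measure on $C$ under orthogonal transformations, the $C$-integration can be carried out through a QR-type change of variables $C = W\,P^{1/2}\,V^T$ with $W \in O(\ell)$, $V$ on the Stiefel manifold $V_{N,\ell}$ and $P = C^T C$. For $\ell \leq N$ this localises the measure onto the rank-$\ell$ stratum, producing volume factors for $O(\ell)$ and for $V_{N,\ell}$ which, combined with \eqref{e:measure_OO}, reproduce exactly the overall constant appearing in the lemma.

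Once $C$ and $Q$ have been integrated out, one is left with an integration over the strictly upper triangular parameters $S$ of $T$, constrained by $I - T^T T \succeq 0$ of rank $\leq \ell$. Block-by-block iterated integration of the entries strictly above the diagonal blocks reduces the problem to a density on the diagonal blocks only. Each diagonal $1\times 1$ block with real eigenvalue $\lambda_j$ then contributes the weight $w_\ell(\lambda_j)$, while each diagonal $2\times 2$ block with eigenvalues $z_j,\bar z_j$ contributes $w_\ell(z_j)w_\ell(\bar z_j)$ after integrating out its single remaining internal parameter. The dichotomy between $\ell = 1$ and $\ell \geq 2$ in \eqref{def:omega:ell} arises naturally here: for $\ell = 1$ the rank constraint is saturated on a codimension-one set, producing the $|1-z|^{-1/2}$ boundary singularity, whereas for $\ell \geq 2$ the integration is genuinely interior and yields the Beta-type expression.

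The step I expect to be the main obstacle is the careful bookkeeping of the numerical constants in the two technical integrations just described: the singular $C$-integration in the regime $\ell \leq N$, and the iterated integration over $S$ subject to the rank constraint. This is precisely where the erroneous coefficients of \cite{KSZ:10} appeared and where \cite{Mays:11} supplied corrections only in the opposite regime $\ell > N$. Verifying in addition that the degenerate case $\ell = 1$, not covered by the $\ell \geq 2$ argument, reproduces exactly the weight in \eqref{def:omega:ell} is a separate subtlety that must be handled by a direct computation of the relevant one-dimensional integral at the constraint boundary.
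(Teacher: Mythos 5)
Your opening step (real Schur decomposition plus Edelman's Jacobian) is the same as the paper's, but from there the proposal has two genuine problems. First, the Jacobian you write, $\de{M_N}=2^M\,|\Delta(\vec\lambda\cup\vec Z)|\,\de{\vec\lambda}\,\de{\vec Z}\,\de{S}\,\de{\mu_H(Q)}$, is not correct as stated. Edelman's formula (Theorem~\ref{t:Jacobian}) is expressed in the \emph{three} real parameters $(\alpha_j,\beta_j,\gamma_j)$ of each $2\times 2$ block and carries the factor $\prod_j(\beta_j-\gamma_j)$, whereas the full Vandermonde over $\vec\lambda\cup\vec Z$ contains the factors $|z_j-\bar z_j|=2\sqrt{\beta_j\gamma_j}$. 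The passage from $(\alpha_j,\beta_j,\gamma_j)$ to $(x_j,y_j)$ requires integrating out $\delta_j=\beta_j-\gamma_j$ \emph{against the remaining constraint} $\Lambda_j^T\Lambda_j\le I_2$, and it is precisely this integration (the substitution $u=\sqrt{4y_j^2+\delta_j^2}\,|1-z_j^2|^{-1}$) that produces the integral $\int_{2|\Im z|/|1-z^2|}^{1}(1-u^2)^{(\ell-3)/2}\de u$ in the definition of $w_\ell^2$. You cannot absorb the block-internal parameter into the Vandermonde before that step, so the weight cannot simply ``come out'' per block as you assert.

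Second, reversing the order of integration (eliminating $C$ first via a polar decomposition, then integrating the Schur parameters $S$ on the rank-constrained stratum) discards the mechanism that makes the computation tractable. In the paper, the off-diagonal block components of $\delta\left(T^TT+C^TC-I_N\right)$ are affine in the entries of $R$ once $C$ is kept explicit, so $R$ is solved for \emph{exactly} by a recursion (Proposition~\ref{p:R_integration}), the integration over $R$ reduces to the Jacobian $\prod_j|\lambda_j|^{j-N}\prod_k(\det^{-2}\Lambda_k)^{M-k}$, and only then is $C$ integrated column by column against the surviving diagonal-block delta functions, which is where $w_\ell$ and the case split $\ell=1$ versus $\ell\ge 2$ actually arise. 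After your $C$-integration the measure on $T$ is singular, supported on $\{\rank(I-T^TT)\le\ell\}$, and ``block-by-block iterated integration of $S$'' over that stratified set is not routine bookkeeping of constants: the claim that each diagonal block then contributes exactly $w_\ell$ is essentially the statement of the lemma and is asserted rather than derived. As it stands the proposal is a plausible outline whose two central computations are missing, and one of whose intermediate formulas is wrong.
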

We prove the above lemma in Section \ref{pf:lemma1} that generalizes the corresponding result of \cite{Mays:11}
to any integer $\ell$ either larger than $N$ or not.
In the following we will use 
$\omega_\ell$ a lot, which is of course the square root of the latter function. 
The above expression for the conditional distribution of the eigenvalues fits in the framework of
point processes associated to weights developed by Borodin and Sinclair in \cite{BS:09}. 
We use their main result which can be stated as
\begin{theorem}\label{t:Pfaff}\cite[Thm~3.4]{BS:09}
	Let $\mathcal{P}$ be a point process of even size $N$ in the complex plane 
	containing either real or pairs of complex conjugate points. 
	Assume that the probability distribution function for ordered configurations 
	$\Xi = \left(\xi_1,\xi_2,\ldots,\xi_N\right)$ conditioned 
	on $\Xi \in \mathcal{X}_{L,M}$ is given by
	\begin{equation*}
	p^{\left(L,M\right)}\left(\Xi\right) = C_N
	2^M \left|\Delta\left(\Xi\right)\right|
	\prod\limits_{j=1}^N w\left(\xi_j\right)\mathbbm{1}_{\Xi \in X_{L,M}}
	\end{equation*}
	with $C_N$ being a normalization constant.

	Then for any sequence of monic polynomials 
	$\big\{p_k\left(z\right)=z^k+\ldots\big\}_{k=0}^{N-1}$ we obtain
	\begin{enumerate}
		\item For any function $f:\C\to\C$
		\begin{equation}\label{e:average}
		\Big\langle \prod_{j=1}^N f\left(\xi_j\right)\Big\rangle_{\mathcal{P}}=
		\frac{\Pf\left(U_{i,j}\left(f\right)\right)}{\Pf\left(U_{i,j}\left(1\right)\right)},
		\end{equation}
		where $\left\langle\,\cdot\,\right\rangle_{\mathcal{P}}$ is the average with respect to $\mathcal P$ and
		\begin{equation*}
			U_{i,j}\left(f\right) 
			=
			\int\limits_{\C}f\left(z\right)p_{i}\left(z\right)w\left(z\right)
			\left(\varepsilon \left[fp_{j}w\right]\right)\left(z\right)
			-f\left(z\right)p_{j}\left(z\right)w\left(z\right)
			\left(\varepsilon \left[fp_{i}w\right]\right)\left(z\right)
			\de{^2z}.
		\end{equation*}
		Here $\Pf$ denotes the Pfaffian of a matrix and
		$\varepsilon : S\left(\C\right)\to S\left(\C\right)$ 
		is the operator defined on the class of Schwartz functions $S(\C)$ by
		\begin{equation*}
		\left(\varepsilon \left[f\right]\right)\left(z\right) = 
		\begin{cases}
		\displaystyle\frac{1}{2}\int\limits_{\R}f\left(t\right)\sgn\left(t-z\right)\de{t}, & z\in \R, \\
		i\sgn\left(\Im z\right)f\left(\overline{z}\right),& z\in \C\setminus\R.
		\end{cases}
		\end{equation*}
		\item The point process $\mathcal{P}$ is a Pfaffian point process with kernel
		\begin{equation*}
		\mathbf{K}_N\left(z,w\right) = 
		\begin{pmatrix}
		DS_N\left(z,w\right) & S_N\left(z,w\right)\\
		-S_N\left(w,z\right) & IS_N\left(z,w\right)+\varepsilon\left(z,w\right)
		\end{pmatrix},
		\end{equation*}
		where
		\begin{align*}
			DS_N\left(z,w\right) &= 2\sum\limits_{i,j=0}^{N-1} 
			v_{i,j}p_{i}\left(z\right)p_{j}\left(w\right)w\left(z\right)w\left(w\right),\\
			S_N\left(z,w\right) &= 2\sum\limits_{i,j=0}^{N-1} 
			v_{i,j}p_{i}\left(z\right)w\left(z\right)
			\left(\varepsilon\left[p_{j}w\right]\right)\left(w\right),\\
			IS_N\left(z,w\right) &= 2\sum\limits_{i,j=0}^{N-1} 
			v_{i,j}\left(\varepsilon\left[p_{i}w\right]\right)\left(z\right)
			\left(\varepsilon\left[p_{j}w\right]\right)\left(w\right),
		\end{align*}
		\begin{equation*}
			\varepsilon\left(z,w\right) = 
			\begin{cases}
			\displaystyle\frac{1}{2}\sgn\left(z-w\right),& z,w\in \R, \\
			0,&\mbox{otherwise}.
			\end{cases}
		\end{equation*}
		and $v_{i,j}$ are the matrix elements of $\big(U_{i,j}^{-1}\left(1\right)\big)^T$.
	\end{enumerate}
\end{theorem}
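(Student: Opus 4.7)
The plan is to prove Theorem~\ref{t:Pfaff} via a generalized de Bruijn integration identity that processes real eigenvalues and complex-conjugate pairs on equal footing.

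First I would recast the density in determinantal form. Using $\Delta(\Xi) = \det[p_{j-1}(\xi_i)]_{i,j=1}^N$, valid for any monic family $\{p_k\}_{k=0}^{N-1}$, the average $\big\langle\prod_j f(\xi_j)\big\rangle_{\mathcal P}$ becomes, after summation over the strata $\mathcal X_{L,M}$ with $L+2M=N$, the integral of a single $N\times N$ determinant $\det[f(\xi_i) p_{j-1}(\xi_i) w(\xi_i)]$ against the ordered measure on each stratum. Fixing the lexicographic order also fixes the sign, removing the absolute value from the Vandermonde.

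Next I would treat the two kinds of contributions in turn. For the $L$ ordered real eigenvalues classical de Bruijn gives
\begin{equation*}
\int_{\lambda_1>\cdots>\lambda_L}\det[\phi_j(\lambda_i)]\,\de{\vec\lambda}
=\Pf\Bigl[\tfrac12\!\int\!\!\int\phi_i(x)\phi_j(y)\sgn(y-x)\,\de x\,\de y\Bigr].
\end{equation*}
For a complex conjugate pair $(z,\bar z)$ with $\Im z>0$, grouping the two corresponding columns of the determinant produces a $2\times 2$ minor whose integration over the upper half plane (combined with the prefactor $2^M$) is algebraically of the form $p_i(z)w(z)\cdot i\sgn(\Im z)(p_j w)(\bar z)-(i\leftrightarrow j)$, which is precisely what motivates the definition $\varepsilon[g](z)=i\sgn(\Im z)g(\bar z)$ on $\C\setminus\R$. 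Summing the real and complex pieces over all $(L,M)$ strata collapses the result into one Pfaffian $\Pf[U_{i,j}(f)]$, and dividing by the $f\equiv 1$ case yields \eqref{e:average}.

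For part~(2) I would extract correlation functions from \eqref{e:average} by specializing $f$. Writing $f(z)=1+\sum_{p=1}^k\alpha_p\mathbbm{1}_{\{z=z_p\}}$ (rigorously, as a limit of smooth bumps) and expanding $\Pf[U_{i,j}(f)]/\Pf[U_{i,j}(1)]$ as a polynomial in the $\alpha_p$'s identifies $\rho_k(z_1,\ldots,z_k)$ with the coefficient of $\prod_p\alpha_p$. Since $U(f)-U(1)$ is a perturbation of rank $\leq 2k$, one can apply the Pfaffian matrix-determinant lemma $\Pf(A+J)=\Pf(A)\sqrt{\det(I+A^{-1}J)}$ for antisymmetric $J$; the inverse $v_{i,j}=(U(1)^{-T})_{i,j}$ enters naturally, and the three sums $DS_N, S_N, IS_N$ emerge as the three independent blocks of the $2\times 2$ kernel $\mathbf K_N$. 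The extra $\varepsilon(z,w)$ term on the diagonal reflects the self-contraction between coincident real points, inherited from the real part of the $\varepsilon$ operator.

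The hard part will be the bookkeeping that folds real and complex contributions into a single $\varepsilon$. The factor $2^M$ in the conditional density, the Jacobian $(z-\bar z)$ produced by each conjugate pair in the Vandermonde, the orientation convention $\Im z>0$, and the combinatorial weights $1/((L/2)!\,M!)$ arising from the choice of lexicographic ordering must combine in just the right way for the Pfaffian formula to be uniform across every stratum $\mathcal X_{L,M}$. Arranging this uniformity and then summing the partition $L+2M=N$ into one closed-form Pfaffian is the main technical work carried out in \cite{BS:09}.
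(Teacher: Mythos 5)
This statement is Theorem~3.4 of Borodin and Sinclair, which the paper imports verbatim with a citation and does not prove; there is therefore no in-paper proof to compare against, and the relevant benchmark is the argument in \cite{BS:09} itself. Your sketch reconstructs that argument faithfully in both parts: writing the Vandermonde as $\det[p_{j-1}(\xi_i)]$, integrating each stratum $\mathcal{X}_{L,M}$ with a de Bruijn identity for the ordered real points and a direct $2\times 2$ column-pairing for each conjugate pair, and then collapsing the sum over $L+2M=N$ into a single $\Pf[U_{i,j}(f)]$; and, for the kernel, functional differentiation of $\Pf[U(f)]/\Pf[U(1)]$ in a low-rank perturbation, which is exactly how the blocks $DS_N$, $S_N$, $IS_N$ and the inverse matrix $v_{i,j}=(U(1)^{-T})_{i,j}$ arise in \cite{BS:09}. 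Two small points worth tightening if you were to write this out: the mechanism that ``collapses'' the strata is the expansion of the Pfaffian of a sum of two antisymmetric matrices over complementary minors, $\Pf(A+B)=\sum_{S}\pm\Pf(A_S)\Pf(B_{S^c})$, which identifies the real and complex parts of the skew product term by term with the $(L,M)$-decomposition — naming this identity is the whole proof of part (1); and since the configurations are already lexicographically ordered, no symmetrization factors such as $1/((L/2)!\,M!)$ appear (an unordered formulation would produce $1/(L!\,M!)$, not $1/((L/2)!\,M!)$). Neither point is a gap in the approach, only in the bookkeeping you yourself flag as the technical core.
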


\begin{remark}
	The matrix elements $U_{i,j}\left(f\right)$ can be written in terms of the skew-product 
	\begin{equation}\label{e:inner_pr}
		\left( f,g\right)^{\left(w\right)} = 
		\int\limits_{\C}f\left(z\right)w\left(z\right)
		\left(\varepsilon \left[g w\right]\right)\left(z\right)
		-g\left(z\right)w\left(z\right)
		\left(\varepsilon \left[f w\right]\right)\left(z\right)
		\de{^2z}.
	\end{equation}
	The product should be thought of consisting of two parts because	
	the action of the operator $\varepsilon$ heavily depends on the argument 
	being real or complex. More precisely one can introduce "real" and "complex" parts of the skew-product
	\begin{align*}
		\left( f,g\right)_{\R}^{\left(w\right)} &= 
		\int\limits_{\R}f\left(x\right)w\left(x\right)
		\left(\varepsilon \left[g w\right]\right)\left(x\right)
		-g\left(x\right)w\left(x\right)
		\left(\varepsilon \left[f w\right]\right)\left(x\right)
		\de{x}, \notag\\
		\left( f,g\right)_{\C}^{\left(w\right)}
		&= i\int\limits_{\R^2}\left(f\left(x+iy\right)g\left(x-iy\right)-g\left(x+iy\right)f\left(x-iy\right)\right)
		w\left(x+iy\right)w\left(x-iy\right)\sgn\left(y\right)\de{x}\de{y}
		\notag.
	\end{align*}
\end{remark}

We see that the distribution found in Lemma \ref{l:CJPD}, satisfies the assumptions of the latter theorem with 

\begin{equation*}
	C_N = \frac{v_{\ell}v_{N}}{v_{N+\ell}} \left(\frac{\left(2\pi\right)^{\ell}}{\ell!}\right)^{N/2},
	\quad 
	\omega= \omega_\ell,
\end{equation*}
defined in \eqref{def:omega:ell}. The latter formulas can be further simplified by choosing a 
family of monic polynomials skew-orthogonal 
with respect to the skew-product \eqref{e:inner_pr} with $w=w_{\ell}$, for which we use notation
$\left( f,g\right)^{\left(\ell\right)}$ (and  $\left( f,g\right)^{\left(\ell\right)}_{\R}, 
\left( f,g\right)^{\left(\ell\right)}_{\C}$ for its parts).
Such a family of skew-orthogonal polynomials
was found in \cite{Fo:10}
and later generalized in \cite{FIK:18} to the case of products of truncated orthogonal matrices. This family reads
\begin{lemma}\label{l:inner_pr}
	Let $k\in\N_0$. Then the polynomials defined by
	\begin{equation}\label{e:Skew_pols}
	\pi_{2k}\left(z\right) = z^{2k},\qquad 
	\pi_{2k+1}\left(z\right) = z^{2k+1}-\frac{2k}{2k+\ell}z^{2k-1},
	\end{equation}
	are skew-orthogonal with respect to the skew-product \eqref{e:inner_pr}, i.e. for $i,j\in\N_0$ with
	$i<j$
	\begin{equation*}
	\left( \pi_{i},\pi_{j} \right)^{\left(\ell\right)} = 
	\begin{cases}
	\displaystyle\frac{\ell!\left(2k\right)!}{\left(2k+\ell\right)!},& i=2k, j=2k+1,\\
	0,& \mbox{otherwise}.
	\end{cases}
	\end{equation*}
	Moreover for any $i,j\in\N_0$
	\begin{equation*}
		\left( \pi_{i},\pi_{j} \right)^{\left(\ell\right)}_{\R} = 
		\begin{cases}
		\displaystyle\frac{\ell!}{2^{\ell-1}\Gamma^2\left(\frac{\ell}{2}\right)\left(2q+\ell\right)}	
		B\left(p+q+\frac{1}{2},\ell\right), & i=2p, j=2q+1\\
		0, &  i-j\  \text{even}.
		\end{cases}
	\end{equation*}
\end{lemma}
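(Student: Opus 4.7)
My strategy is a direct computation of the skew-product on monomials, followed by an algebraic reduction using the linear combination defining $\pi_{2q+1}$. The first simplification is a parity reduction: $w_\ell^2$ depends on $z$ only through $|1-z^2|$ and $|\Im z|$, and is therefore invariant under $z\mapsto -z$. Writing the real part as
\[(z^i,z^j)^{(\ell)}_\R = \frac12\int_{-1}^1\!\int_{-1}^1 (x^ix'^j-x^jx'^i)\,w_\ell(x)w_\ell(x')\sgn(x'-x)\,dx\,dx',\]
the substitution $(x,x')\mapsto(-x,-x')$ multiplies the integrand by $(-1)^{i+j+1}$, so the real part vanishes whenever $i+j$ is even. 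Passing to polar coordinates in the complex part one obtains
\[(z^i,z^j)^{(\ell)}_\C = -2\int_0^{2\pi}\!\int_0^1 r^{i+j+1}\sin((i-j)\theta)\,w_\ell^2(re^{i\theta})\sgn(\sin\theta)\,dr\,d\theta,\]
and the shift $\theta\mapsto\theta+\pi$ (corresponding to $z\mapsto -z$) multiplies the integrand by $(-1)^{i-j+1}$, so the complex part likewise vanishes whenever $i-j$ is even. Both conditions amount to $i\equiv j\pmod 2$, so $(\pi_i,\pi_j)^{(\ell)}=(\pi_i,\pi_j)^{(\ell)}_\R=0$ in every same-parity case.

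For the remaining cross terms set $M_{p,q}:=(z^{2p},z^{2q+1})^{(\ell)}$ and $M^\R_{p,q}:=(z^{2p},z^{2q+1})^{(\ell)}_\R$. Restricted to $\R$ the weight simplifies to $w_\ell^2(x)=c_\ell(1-x^2)^{\ell-2}$ with $c_\ell=\ell!/(2^\ell\Gamma^2(\ell/2))$ (by evaluating the inner $u$-integral in \eqref{def:omega:ell} and applying the Legendre duplication formula). Writing $\sgn(x'-x)=-1+2\mathbbm{1}_{x'>x}$, the constant term drops out because $\int_{-1}^1 x'^{2q+1}w_\ell(x')\,dx'=0$; substituting $s=x^2$, $t=x'^2$ then yields
\[M^\R_{p,q}=c_\ell\int_0^1 s^{p-1/2}(1-s)^{(\ell-2)/2}\!\int_s^1 t^q(1-t)^{(\ell-2)/2}\,dt\,ds,\]
whose evaluation (by expanding the inner integral as $B(q+1,\ell/2)$ minus an incomplete beta and then applying standard contiguous relations) gives an explicit beta-function formula. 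For the full $M_{p,q}$ the complex part must be added: interchanging the outer $u$-integration in the definition of $w_\ell^2$ with the disc integral and then substituting $\tan\phi = 2r\sin\theta/(1-r^2)$ turns the inner integral into $\int_\phi^{\pi/2}\cos^{\ell-2}\eta\,d\eta$, after which the $(r,\phi)$-integration decouples into two one-variable beta integrals.

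Finally, bilinearity in the second argument gives $(\pi_{2p},\pi_{2q+1})^{(\ell)}=M_{p,q}-\frac{2q}{2q+\ell}M_{p,q-1}$ (with $M_{p,-1}:=0$), and analogously with $M^\R$. Inserting the explicit beta-function expressions and using the contiguous identity $aB(a+1,b)+bB(a,b+1)=B(a,b)$, the difference telescopes: it vanishes for $p\ne q$ and collapses to $\ell!(2k)!/(2k+\ell)!$ when $p=q=k$, proving the skew-orthogonality assertion; the same bookkeeping with $M^\R_{p,q}$ produces the stated Beta formula $\ell!\,B(p+q+1/2,\ell)/(2^{\ell-1}\Gamma^2(\ell/2)(2q+\ell))$. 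The main technical obstacle is the complex-part integral: because $w_\ell^2$ is itself an integral whose lower limit depends jointly on $r$ and $\theta$, the whole proof rests on interchanging the order of integration and selecting the angular variable $\phi$ adapted to that endpoint, after which everything reduces to routine beta-function algebra.
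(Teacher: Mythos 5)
Your parity reduction and your treatment of the real part are sound. In fact the ``contiguous relation'' that makes $M^{\R}_{p,q}-\frac{2q}{2q+\ell}M^{\R}_{p,q-1}$ telescope is exactly the paper's key observation in disguise: $\big(t^{q}-\frac{2q}{2q+\ell}t^{q-1}\big)(1-t)^{\ell/2-1}=-\frac{2}{2q+\ell}\frac{d}{dt}\big(t^{q}(1-t)^{\ell/2}\big)$, so the inner integral collapses to $\frac{2}{2q+\ell}s^{q}(1-s)^{\ell/2}$ and the outer one is a single Beta integral giving the stated $\frac{\ell!}{2^{\ell-1}\Gamma^{2}(\ell/2)(2q+\ell)}B\left(p+q+\tfrac12,\ell\right)$. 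The paper implements the same identity through the operator $\varepsilon$, computing $\varepsilon[w_{\ell}\pi_{2k+1}]$ in closed form and integrating by parts; up to bookkeeping these are the same argument.

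The complex part, however, is a genuine gap, and it is precisely the hard half of the lemma. Your claim that after interchanging the $u$-integration with the disc integral and substituting $\tan\phi=2r\sin\theta/(1-r^{2})$ ``the $(r,\phi)$-integration decouples into two one-variable beta integrals'' is unsubstantiated and, as stated, not believable: the substitution does normalize the lower endpoint of the $u$-integral to $\sin\phi$, but the remaining integrand still contains $\sin((2q+1-2p)\theta)\,|1-z^{2}|^{\ell-2}$ together with the Jacobian $\partial\theta/\partial\phi\big|_{r}$, and none of these is a function of $\phi$ alone (nor of $r$ alone), since $\theta$ as a function of $(r,\phi)$ mixes the two variables. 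Moreover the answer you need is not a product of Beta integrals: you must produce the specific combination $(\pi_{2p},\pi_{2q+1})^{(\ell)}_{\C}=-(\pi_{2p},\pi_{2q+1})^{(\ell)}_{\R}+\binom{2q+\ell}{\ell}^{-1}\delta_{p,q}$, with the cancellation of the real part and a Kronecker delta in $(p,q)$. In the paper this structure emerges only after exploiting the Cauchy--Riemann--type identity $\frac{\partial}{\partial x}\big(\bar z^{2k}(1-\bar z^{2})^{\ell/2}\big)=i\frac{\partial}{\partial y}\big(\cdots\big)=-(2k+\ell)\pi_{2k+1}(\bar z)(1-\bar z^{2})^{\ell/2-1}$ and integrating by parts in $x$ and in $y$: the boundary term of the $y$-integration reproduces $-(\,\cdot\,,\,\cdot\,)^{(\ell)}_{\R}$, and the derivative $\big(i\partial_{x}-\partial_{y}\big)\big(\sgn(\Im z)\widehat w_{\ell}(z)\big)$ collapses to a multiple of $(1-|z|^{2})^{\ell-2}(1-z^{2})/|1-z^{2}|^{\ell}$, whose rotation invariance yields the $\delta_{p,q}$. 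Without this (or an equivalent) mechanism, your outline does not reach the skew-orthogonality statement, so you need to either carry out the complex-part integral honestly or adopt the integration-by-parts route.
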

We prove the above lemma in Section \ref{pf:lemma2}.
Our final step is to apply the result of \eqref{e:average} to the function
\begin{equation}\label{e:fz}
	f\left(\zeta\right) = 1-\chi_{\R}\left(\zeta\right) = 
	\begin{cases}
		1, & \zeta \in \C\setminus\R,\\
		0, & \zeta \in \R.
	\end{cases}
\end{equation}
Then the average $\left\langle \prod_{j=1}^N f\left(\xi_j\right)\right\rangle_{M_{2n}}$ 
with respect to the random matrices ensemble coincides with $p_{2n}^{\left(\ell\right)}$ and using
\eqref{e:average} we obtain
\begin{equation}\label{e:Pfaffian}
	p_{2n}^{\left(\ell\right)} = 
	\frac{\Pf \left\{\left( \pi_i,\pi_j\right)_{\C}^{\left(\ell\right)}\right\}_{i,j=0}^{2n-1}}{
		\Pf \left\{\left( \pi_i,\pi_j\right)^{\left(\ell\right)}\right\}_{i,j=0}^{2n-1}}
	= \frac{\Pf \left\{\left( \pi_i,\pi_j\right)^{\left(\ell\right)}
		-\left( \pi_i,\pi_j\right)_{\R}^{\left(\ell\right)}\right\}_{i,j=0}^{2n-1}}{
		\Pf \left\{\left( \pi_i,\pi_j\right)^{\left(\ell\right)}\right\}_{i,j=0}^{n-1}}.
\end{equation}
The matrix in the denominator is block-diagonal and the one in the numerator has a check board pattern. Both Pfaffians can
be reduced to determinants by the use of
\begin{proposition}\label{p:checkboard}
	Let $A = \left\{a_{i,j}\right\}_{i,j=0}^{2n-1}$ be a skew-symmetric matrix with $a_{i,j} = 0$
	whenever $i$ and $j$ have the same parity. Then
	\begin{equation*}
	\Pf A = \det \big\{a_{2i,2j+1}\big\}_{i,j=0}^{n-1}.
	\end{equation*}
\end{proposition}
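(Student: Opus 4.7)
The plan is to reduce the Pfaffian of $A$ to a Pfaffian of a block anti-diagonal matrix via a simultaneous row and column permutation, and then use the standard identity that the Pfaffian of a block anti-diagonal skew-symmetric matrix equals (up to an explicit sign) the determinant of the upper-right block.

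First I would define the permutation $\tau \in S_{2n}$ that sends all even indices to the first half of positions and all odd indices to the second half, namely $\tau(2i)=i$ and $\tau(2i+1)=n+i$ for $i=0,\ldots,n-1$. Let $P_\tau$ denote the corresponding permutation matrix. Since $a_{i,j}=0$ whenever $i,j$ have the same parity, the conjugated matrix $\tilde A := P_\tau A P_\tau^T$ has the block form
\begin{equation*}
    \tilde A = \begin{pmatrix} 0 & B \\ -B^T & 0\end{pmatrix}, \qquad B_{i,j} = a_{2i,\,2j+1}, \quad i,j=0,\ldots,n-1.
\end{equation*}

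Next I would invoke two standard facts. The first is that $\Pf(P_\tau A P_\tau^T)=\sgn(\tau)\,\Pf(A)$. The second is the block-anti-diagonal Pfaffian identity
\begin{equation*}
    \Pf\!\begin{pmatrix} 0 & B \\ -B^T & 0\end{pmatrix} = (-1)^{n(n-1)/2}\det B,
\end{equation*}
which can be proved either by expanding the Pfaffian as a sum over perfect matchings (the only non-vanishing matchings pair the first $n$ rows with the last $n$, and summing $\sgn$ over such matchings reproduces the determinant up to a global sign that is fixed by checking $n=1$), or by squaring both sides using $\Pf(\tilde A)^2=\det \tilde A =\det(B)^2$ and pinning down the sign from a single example.

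Finally I would compute $\sgn(\tau)$ by counting inversions. In the ordered image $(0,2,\ldots,2n-2,1,3,\ldots,2n-1)$, the inversions are exactly the pairs $(2i+1,2j)$ with $i<j$ from the original list, of which there are $\binom{n}{2}=n(n-1)/2$. Hence $\sgn(\tau)=(-1)^{n(n-1)/2}$, so the two signs cancel and
\begin{equation*}
    \Pf A = \sgn(\tau)^{-1}\,\Pf \tilde A = (-1)^{n(n-1)/2}\cdot(-1)^{n(n-1)/2}\det B = \det B,
\end{equation*}
which is the claimed identity. The only delicate point is bookkeeping the two factors of $(-1)^{n(n-1)/2}$; since they come from independent elementary calculations (the sign of $\tau$ and the block-anti-diagonal Pfaffian identity), the main obstacle is simply making sure both conventions agree so that the signs cancel rather than double up.
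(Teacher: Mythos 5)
Your proof is correct. It takes a genuinely different route from the paper: the paper writes $\Pf A$ as a Berezin integral $\int \de{\psi_0}\cdots\de{\psi_{2n-1}}\exp{-\frac12\sum_{j,k}A_{j,k}\psi_j\psi_k}$ over Grassmann variables, observes that the checkerboard pattern means only even--odd couplings survive in the exponent, and then recognizes the resulting integral as the Grassmann representation of $\det\{a_{2i,2j+1}\}$; you instead conjugate by the shuffle permutation $\tau$ to reach the block anti-diagonal form $\bigl(\begin{smallmatrix}0 & B\\ -B^T & 0\end{smallmatrix}\bigr)$ and combine $\Pf(P_\tau AP_\tau^T)=\sgn(\tau)\Pf A$ with $\Pf\bigl(\begin{smallmatrix}0 & B\\ -B^T & 0\end{smallmatrix}\bigr)=(-1)^{n(n-1)/2}\det B$. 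Both of your sign computations check out (the inversion count of $(0,2,\ldots,2n-2,1,3,\ldots,2n-1)$ is indeed $\binom{n}{2}$, and the block anti-diagonal sign is verified at $n=1,2$ and propagates by the matching expansion), so the two factors of $(-1)^{n(n-1)/2}$ cancel as claimed. The trade-off is that your argument is elementary and needs no superanalysis machinery, but it forces you to track two independent signs that must conspire to cancel; the Grassmann route buries the sign bookkeeping in the ordering conventions of the Berezin integral and makes the even/odd decoupling structurally transparent, at the cost of invoking that formalism. Either proof is acceptable here.
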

We prove this proposition in Appendix \ref{app:Auxiliary}.
Inserting this in \eqref{e:Pfaffian}, implies
\begin{equation}\label{e:pnl}
	p_{2n}^{\left(\ell\right)} = \frac{\det \left\{\left( \pi_{2p},\pi_{2q+1}\right)^{\left(\ell\right)}
	- \left( \pi_{2p},\pi_{2q+1}\right)_{\R}^{\left(\ell\right)}\right\}_{p,q=0}^{n-1}}{\det 
	\left\{\left( \pi_{2p},\pi_{2q+1}\right)^{\left(\ell\right)}\right\}_{p,q=0}^{n-1}}.
\end{equation} 
The multiplicative property of the determinant, Lemma~\ref{l:inner_pr} and the diagonal structure 
of the matrix in the denominator yields
\begin{equation*}
	p_{2n}^{\left(\ell\right)} = \det\left\{I-\frac{\left( \pi_{2p},\pi_{2q+1}\right)_{\R}^{\left(\ell\right)}}{
	\left( \pi_{2p},\pi_{2q+1}\right)^{\left(\ell\right)}}\right\}_{p,q=0}^{n-1}
	= \det\left\{I - \frac{B\left(p+q+\frac{1}{2},\ell\right)}{2^{\ell-1}\Gamma^2\left(\frac{\ell}{2}\right)}
	\frac{\left(2p+\ell\right)!}{\left(2p\right)!\left(2q+\ell\right)}\right\}_{p,q=0}^{n-1}.
\end{equation*}
To obtain \eqref{e:Pers_det}, we conjugate the latter matrix by the diagonal matrix $Q$ given by $Q_{p,p}=  \sqrt{\frac{\left(2p\right)!}{\left(2p+\ell\right)!\left(2p+\ell\right)}}$, i.e. multiply by $Q$ from the left and $Q^{-1}$ from the right. This proves Theorem~\ref{t:Main_Det}.
\end{proof}
\begin{proof}[Proof of Proposition~\ref{p:MGF}]
	Identity \eqref{e:MGF} follows along the same lines as \eqref{e:Pers_det}. We change
	the test function \eqref{e:fz} to
	\begin{equation*}
	g\left(z\right) = 1-\left(1-e^s\right)\chi_{\R}\left(z\right).
	\end{equation*}
	One can easily see that $g\left(z\right) = e^s$ if $z\in\R$ and $1$ otherwise. Therefore,
	\begin{equation*}
	\Big\langle \prod\limits_{z\in \mathrm{spec}M_{2n}} g\left(z\right)\Big\rangle_{M_{2n}}
	= \left\langle e^{s\mathcal{N}_n^{\left(\ell\right)}\left(\R\right)}\right\rangle_{M_{2n}}.
	\end{equation*}
	Applying \eqref{e:average} to the above function, we obtain, compare also with \eqref{e:pnl}, that
	\begin{equation*}
		\left\langle e^{s\mathcal{N}^{\left(\ell\right)}_n\left(\R\right)}\right\rangle_{M_{2n}} 
		= 
		\frac{\Pf \left\{\left( \pi_i,\pi_j\right)^{\left(\ell\right)}
			-\left(1-e^{2s}\right)\left( \pi_i,\pi_j\right)_{\R}^{\left(\ell\right)}\right\}_{i,j=0}^{2n-1}}{
			\Pf \left\{\left( \pi_i,\pi_j\right)^{\left(\ell\right)}\right\}_{i,j=0}^{n-1}}.
	\end{equation*}
	Mimicking the analysis at the end of the proof of Theorem~\ref{t:Main_Det}, gives the result.
\end{proof}

\begin{proof}[Proof of Proposition~\ref{p:all_real}]
Expanding the expectation according to 
	\begin{equation}\label{c:all_real_eq1}
	\left\langle e^{s\mathcal{N}^{\left(\ell\right)}_n}\right\rangle_{M_{2n}} = \sum_{k=0}^{2n} e^{s k} \mathbb P \big(\mathcal{N}_n^{\left(\ell\right)} = 		k\big),
	\end{equation}
	implies that the probability $p^{\left(\ell\right)}_{2n,2n}$ is given by the coefficient in front of $e^{2ns}$ in this expansion. The latter expectation can be expressed in terms of the determinant	\eqref{e:MGF}. Expanding this determinant in the same way as \eqref{c:all_real_eq1} in terms of powers of $e^{sk}$ using Leibniz formula, we obtain 
	\begin{equation*}
	p^{\left(\ell\right)}_{2n,2n} = \mathrm{coeff}\left[
	\det\left(I_n-\mathcal{D}^{\left(\ell\right)}_n H^{\left(\ell\right)}_n
	\mathcal{D}^{\left(\ell\right)}_n+e^{2s}
	\mathcal{D}^{\left(\ell\right)}_n H^{\left(\ell\right)}_n
	\mathcal{D}^{\left(\ell\right)}_n\right),
	e^{2sn}
	\right],
	\end{equation*}
	where $\text{coeff}[\cdot,e^{2sn}]$ stands here for the coefficient in front of $e^{2sn}$ in this expansion.
	The power $e^{2sn}$ is the maximum power of the exponent in the expansion and thus writing down the latter determinant using the 	Leibniz rule, one sees that
	\begin{equation*}
	p_{2n,2n}^{\left(\ell\right)} = \det \mathcal{D}^{\left(\ell\right)}_n H^{\left(\ell\right)}_n
	\mathcal{D}^{\left(\ell\right)}_n = 
	\det H^{\left(\ell\right)}_n\cdot\big(\det \mathcal{D}^{\left(\ell\right)}_n\big)^2.
	\end{equation*}
	For the determinant of $H^{\left(\ell\right)}_n$ we use  a well-known identity for Hankel 
	matrix determinants. Let $\mathcal{H} = \left\{h_{j+k}\right\}_{j,k=0}^{n-1}$ be a Hankel matrix with
	\begin{equation}
	h_{n} = \int x^n \mu\left(x\right)\de{x},
	\end{equation}
	for some weight function $\mu\left(x\right)$. Then using the symmetry of the integrand
	\begin{align*}
		\det \mathcal{H} &=
		\det\left\{\int x_j^{j+k}\mu\left(x_j\right)\de{x_j}\right\}_{j,k=0}^{n-1}
		= \iint \prod\limits_{j=0}^{n-1} x_j^j \det\left\{ x_j^{k}\right\}_{j,k=0}^{n-1}
		\prod_{j=0}^{n-1} \mu\left(x_j\right)\de{x_j}
		\\
		&=
		\frac{1}{n!}\iint \sum\limits_{\sigma}
		\prod\limits_{j=0}^{n-1} x_{\sigma\left(j\right)}^j 
		\det\left\{ x_{\sigma\left(j\right)}^{k}\right\}_{j,k=0}^{n-1}
		\prod_{j=0}^{n-1} \mu\left(x_j\right)\de{x_j}
		\\
		&=
		\frac{1}{n!}\iint \prod\limits_{1\leq j<k\leq n}\left(x_j-x_k\right)^2
		\prod_{j=0}^{n-1} \mu\left(x_j\right)\de{x_j}.
	\end{align*}
	In the case of $H^{\left(\ell\right)}_n$ we take $\mu\left(x\right) = \displaystyle\frac{1}{2^{\ell-1}
		\Gamma^2\left(\frac{\ell}{2}\right)}
	x^{-1/2}\left(1-x\right)^{\ell-1}, x\in\left[0,1\right]$. Then
	\begin{align*}
		\det H^{\left(\ell\right)}_n &= \frac{1}{2^{n\left(\ell-1\right)}\Gamma^{2n}\left(\frac{\ell}{2}\right)n!}
		\prod\limits_{j=0}^{n-1}
		\frac{\Gamma\left(j+\frac{1}{2}\right)\Gamma\left(j+\ell\right)\Gamma\left(j+2\right)}{
			\Gamma\left(j+n+\ell+\frac{1}{2}\right)}\\
		&=
		\frac{1}{2^{n\left(\ell-1\right)}\Gamma^{2n}\left(\frac{\ell}{2}\right)}
		\prod\limits_{j=0}^{n-1}
		\frac{\Gamma\left(j+\ell\right)\Gamma\left(j+\frac{1}{2}\right)\Gamma\left(j+1\right)}{
			\Gamma\left(j+n+\ell-\frac{1}{2}\right)}
		,
	\end{align*}
	where we used the value of Selberg's integral \cite{Se:44}. Using definition \eqref{e:D_nl} 
	of $\mathcal{D}^{\left(\ell\right)}_n$ and the value of its determinant, 
	together with the Gamma function duplication formula, we get	
	\begin{equation*}
		p_{2n,2n}^{\left(\ell\right)} = \Gamma^{-2n}\left(\frac{\ell}{2}\right)
		\prod\limits_{j=0}^{n-1} \frac{\Gamma\left(j+\frac{\ell}{2}\right)
		\Gamma\left(j+\frac{\ell+1}{2}\right)\Gamma\left(j+\ell\right)}{
		\Gamma\left(j+n+\ell-\frac{1}{2}\right)}.
	\end{equation*} 
	The former can be rewritten as in \eqref{e:all_real} by using
	\begin{equation*}
		\prod\limits_{j=0}^{n-1}\Gamma\left(j+a\right) = \frac{G\left(n+a\right)}{G\left(a\right)},
	\end{equation*}
	with $G$ being the Barnes $G$-function.
	The asymptotic expansion of Barnes $G$-function at infinity reads
	\begin{equation*}
		\log G\left(z\right) = \frac{1}{2}z^2\log z-\frac{3}{4}z^2 -z\log z+z\left(1+\frac{\log 2\pi}{2}\right)
		+\frac{5}{12}\log z+O\left(1\right), \quad z\to\infty.
	\end{equation*}
	Let $\alpha_n = \frac{\ell}{n}$ be bounded by strictly positive constants 
	from above and below when $n\to\infty$. 
	Applying the latter asymptotic expansion to \eqref{e:all_real} and $\alpha_n$,
	we only keep the first two terms as the rest will contribute to lower order terms. We then obtain
	\begin{align*}
		\log p_{2n,2n}^{\left(\ell\right)} = n^2
		&\left(
			-\log 2 - \alpha_n\log\alpha_n\left(1+\frac{3}{4}\alpha_n\right)
			-\frac{\alpha_n }{2}
			\right.
			\\
			&\left.
			+\left(1+\alpha_n\right)^2\log\left(1+\alpha_n\right)
			- \left(1+\frac{\alpha_n}{2}\right)^2\log\left(2+\alpha_n\right)
		\right)+o\left(n^2\right), n\to\infty,
	\end{align*}
	where we also used
	\begin{equation*}
		\log \Gamma\left(z\right) = \left(z-\frac{1}{2}\right)\log z - z + o\left(z\right), z\to\infty,
	\end{equation*}
	to analyse the denominator of \eqref{e:all_real}.
	The result derived above is valid even for $\alpha_n = o\left(1\right), n\to\infty$, and thus
	for $\ell\ll n$ we get
	\begin{equation*}
		\log p_{2n,2n}^{\left(\ell\right)} = -2\log 2 n^2 + o\left(n^2\right).
	\end{equation*}
	One can also argue that for $\alpha_n\to\infty$ the coefficient in front of $n^2$ converges to
	$-\log 2$, which gives the answer. However, to prove rigorously the asymptotic result in the regime of 
	large truncations with $\ell \gg n$ we need to take all non-constant terms of the
	Barnes $G$-function expansion into account. While expanding \eqref{e:all_real} with respect to $\ell$
	one can see that terms containing $\ell^2\log \ell, \ell \log\ell$ and $\log\ell$ vanish and
	for $\beta_n = n/\ell = o\left(1\right), n\to\infty$ we get
	\begin{align*}
		\log p_{2n,2n}^{\left(\ell\right)}
		&= \bigg(\left(1+\beta_n\right)^2\log\left(1+\beta_n\right)-
		\Big(\beta_n+\frac{1}{2}\Big)^2\log\left(1+2\beta_n\right)-\frac{\beta_n}{2}
		-\beta_n^2\log 2\bigg)\ell^2
		\\&
		+\left(-\frac{5}{2}\left(1+\beta_n\right)\log\left(1+\beta_n\right)
		+\frac{3}{4}\left(1+2\beta_n\right)\log\left(1+2\beta_n\right)
		+\beta_n\left(1+\log2^{3/2}\pi\right)\right)\ell
		\\
		&+\frac{7}{24}\left(2\log\left(1+2\beta_n\right)-5\log\left(1+\beta_n\right)
		\right)+o\left(1\right), \quad
		\ell\to\infty.
	\end{align*}
	For small $\beta_n$ the first bracket is dominated by $\beta_n^2\log 2$, this together with the definition
	of $\beta_n$ gives the result. The second and third brackets are of order $O\left(\beta_n\right)$ and
	therefore will not contribute to the leading order. This finishes the proof of \eqref{e:all_real_1}.
\end{proof}
\section{Asymptotic analysis of $\det\big(I_n-H^{(1)}_n\big)$}\label{s:Det}
In this section we use the short hand notation $H_n=H^{(1)}_n$. 
The asymptotic analysis of the determinant $\det\left(I_n-H_n\right)$ follows along similar ideas used in \cite{KPTTZ:16},
where a similar persistence problem led to an analysis of a determinant of 
identity minus a weighted Hankel matrix as well.
We use the trace-log expansion of the determinant which leads to analysing traces of powers of the Hilbert matrix $H^{(1)}_n$.
The asymptotic behaviour of the trace of a fixed power of the Hilbert matrix are studied in \cite{Wi:66}. However, to obtain the asymptotic behaviour of the infinite series, we need to work a bit harder to obtain proper upper 
and a lower bound. The upper bound is simple and follows from truncating the trace-log expansion 
to a finite number of terms. For the lower bound we change $H^m_n\to H_nH_{\infty}^{m-1}$, where $H_\infty$ denotes the infinite Hilbert matrix, as it was 
suggested in \cite{KPTTZ:16}. Finally, using the explicit 
diagonalization of $H_{\infty}$ found in \cite{KS:16}, we conclude the proof.
\begin{proof}[Proof of Theorem~\ref{t:det_main}]
	In what follows we use the notation $D_n = \det\left(I_n-H_n\right)$ and
	$H=H_{\infty}$ for the infinite Hilbert matrix acting on $\ell_2\left(\N_0\right)$.
	From \cite[Sec. 4]{KS:16} it follows that $\sigma(H)=[0,1]$ and consists of purely 
	absolutely continuous spectrum only. In particular, $\|H\|=1$, where $\|\cdot\|$ 
	denotes the operator norm. For the $n\times n$ restriction we have $H_n$ of $H$ we have
		
	\begin{lemma}\label{l:Largest_eig}
		For all $n\in\N$ we obtain	$\| H_n\|  <1$.
	\end{lemma}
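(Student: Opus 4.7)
The plan is to derive a contradiction from the assumption $\|H_n\| = 1$ by embedding a hypothetical norm-attaining vector of $H_n$ into $\ell_2(\mathbb{N}_0)$ and showing it would have to be an eigenvector of $H$ at the edge of the spectrum. Since $H$ has purely absolutely continuous spectrum, no such eigenvector can exist.

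More concretely, first I would record the general inequality $\|H_n\| \le \|H\| = 1$, which comes for free from the fact that $H_n$ is the compression of $H$ to the subspace $V_n \subset \ell_2(\mathbb{N}_0)$ of sequences supported on the first $n$ coordinates. Indeed, for $v \in \mathbb{R}^n$ and its zero-padded extension $\tilde v \in \ell_2(\mathbb{N}_0)$ one has the exact identity
\begin{equation*}
\langle v, H_n v\rangle_{\mathbb{R}^n} = \langle \tilde v, H \tilde v\rangle_{\ell_2} \le \|H\|\,\|\tilde v\|^2 = \|v\|^2,
\end{equation*}
so $\|H_n\| \le 1$.

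Next I would assume for contradiction that $\|H_n\| = 1$. Since $H_n$ is a finite-dimensional self-adjoint (in fact positive) matrix, its operator norm is attained as an eigenvalue, so there exists $v \in \mathbb{R}^n \setminus\{0\}$ with $H_n v = v$. Letting $\tilde v$ be its zero-padded extension to $\ell_2(\mathbb{N}_0)$, the identity above upgrades to equality: $\langle \tilde v, H \tilde v\rangle = \|\tilde v\|^2$. Using that $0 \le H \le I$ (since $\sigma(H) = [0,1]$), the operator $I - H$ is positive and admits a self-adjoint square root, giving
\begin{equation*}
\|(I-H)^{1/2}\tilde v\|^2 = \langle \tilde v, (I - H)\tilde v\rangle = \|\tilde v\|^2 - \langle \tilde v, H \tilde v\rangle = 0.
\end{equation*}
Hence $(I - H)\tilde v = 0$, i.e.\ $\tilde v$ is a nonzero eigenvector of $H$ for the eigenvalue $1$. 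This contradicts the absence of point spectrum in $H$ (recall $\sigma(H) = [0,1]$ is purely absolutely continuous by \cite{KS:16}), completing the proof.

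The argument is structurally short and has no real obstacle beyond citing the two ingredients: (i) the compression inequality for quadratic forms, and (ii) the characterisation of norm-attaining vectors as eigenvectors at the top of the spectrum for a bounded self-adjoint operator, via the square-root trick. Both are standard, so the only point to be careful about is that one must invoke the absolute continuity from \cite{KS:16} — it is not enough to know $\|H\| = 1$, one needs that $1$ is not an eigenvalue of $H$.
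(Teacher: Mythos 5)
Your proof is correct, and its overall strategy coincides with the paper's: assume $\|H_n\|=1$, extend a norm-attaining vector of $H_n$ by zero to get $\tilde v\in\ell^2(\N_0)$ with $\langle \tilde v, H\tilde v\rangle=\|\tilde v\|^2$, conclude that $1$ must be an eigenvalue of $H$, and contradict the purely absolutely continuous spectrum established in \cite{KS:16}. The only difference is the step that turns the norm-attaining vector into an eigenvector of $H$: the paper invokes Donoghue's theorem \cite{D:57}, that an attained boundary point of the numerical range of a bounded operator is a proper eigenvalue, whereas you use the elementary square-root trick $\|(I-H)^{1/2}\tilde v\|^2=\langle\tilde v,(I-H)\tilde v\rangle=0$ for the positive operator $I-H$. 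Your version is more self-contained and avoids the external citation, at the mild cost of needing $0\le H\le I$ (which does follow from $\sigma(H)=[0,1]$ and self-adjointness, so nothing is missing); the paper's version is marginally more general in that it does not require positivity of $H$. Both arguments are complete, and you correctly identify that the essential input is the absence of point spectrum, not merely $\|H\|=1$.
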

We prove this lemma in Subsection~\ref{pf:lemma3}.
Our analysis relies on the well-known series expansion 
	\begin{equation}\label{e:Log_series}
	\log(1-x) =-\displaystyle \sum_{m\in \N} \frac{x^m}m
	\end{equation}
	valid for all $|x|<1$
	and hence the latter lemma implies
	\begin{equation*}
	\log  D_n
	= -\sum\limits_{m\in\N} \frac{\Tr (H_n^m)}{m}.
	\end{equation*}
The above series is convergent because of Lemma~\ref{l:Largest_eig} 
	and consists of negative terms only. Thus any finite truncation of the series gives an upper bound on the series.

	For the upper bound we rely on the following result proved in \cite[Thm. 4.3 and Cor. 4.4]{Wi:66}.
	\begin{lemma}\label{l:moments}\cite[Theorem 4.3]{Wi:66}
		For any fixed $m\in\N$ the moments of $H_n$ satisfy
		\begin{equation}\label{e:moments}
		\lim_{n\to\infty} \frac{\Tr (H_n^m)} {\log n} = \frac{1}{\pi} 
		\int\limits_{0}^{\infty}  \sech^m\left(u \pi\right) \de{u} 
		=:\mu_m.
		\end{equation}
	\end{lemma}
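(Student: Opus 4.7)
The plan is to use the spectral representation of the infinite Hilbert matrix $H$ obtained in \cite[Sec.~4]{KS:16}: there exists a unitary $U:\ell^2(\mathbb{N}_0)\to L^2(\mathbb{R}_+,du)$ that diagonalizes $H$ as multiplication by $\sech(\pi u)$, so that $H^m$ becomes multiplication by $\sech^m(\pi u)$. Under this isomorphism the $\sech^m$ integral on the right-hand side of \eqref{e:moments} acquires a natural spectral meaning.

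First I would reduce $\Tr(H_n^m) = \Tr((P_n H P_n)^m)$ to $\Tr(P_n H^m) + o(\log n)$, where $P_n$ is the projection onto the first $n$ basis vectors. The difference comes from the non-commutation of $P_n$ and $H$ and, upon expanding the products and repeatedly inserting $I=P_n+Q_n$ with $Q_n=I-P_n$, splits into a finite collection of cross terms, each containing at least one factor of the ``boundary'' operator $P_n H Q_n$. A direct tail estimate gives
\begin{equation*}
\|P_n H Q_n\|_{HS}^2 \;=\; \tfrac{1}{\pi^2}\sum_{p<n,\,q\ge n}(p+q+1/2)^{-2} \;=\; O(1)
\end{equation*}
uniformly in $n$, as one sees by grouping terms with $p+q=s$ fixed and splitting at $s=2n$. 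Combined with $\|H\|\le 1$ and a careful bookkeeping of Schatten norms of the remaining factors, each cross-term trace is controlled at $o(\log n)$, as required.

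In spectral coordinates,
\begin{equation*}
\Tr(P_n H^m) \;=\; \int_0^\infty \sech^m(\pi u)\, D_n(u)\,du, \qquad D_n(u):=\sum_{p=0}^{n-1}|(Ue_p)(u)|^2,
\end{equation*}
so the claim reduces to showing that $D_n(u) = \tfrac{1}{\pi}\log n \,(1+o(1))$ pointwise on $(0,\infty)$, accompanied by an integrable dominating bound against $\sech^m(\pi u)$. Plugging these estimates into the integral and invoking dominated convergence then yields $\Tr(P_n H^m) = \mu_m \log n + o(\log n)$, completing the proof.

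The main obstacle is precisely this density estimate for $D_n(u)$. Using the explicit form of the eigenfunctions $\psi_p=Ue_p$ from \cite[Sec.~4]{KS:16} — which are Jacobi/Legendre-function-like objects — one needs to establish the large-$p$ asymptotic $|\psi_p(u)|^2 \sim 1/(\pi p)$ uniformly for $u$ in compact subsets of $(0,\infty)$. This is a Plancherel--Rotach-type statement, provable via saddle-point or WKB analysis of the explicit formulas. Once this is in place, summing $1/(\pi p)$ over $p<n$ gives $D_n(u)\sim\log n/\pi$, and the decay of $\sech^m(\pi u)$ at both $u\to 0^+$ (irrelevant since $\sech^m(0)=1$) and $u\to\infty$ provides the dominating function needed for the integral identity. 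The non-trivial steps are thus (i) the precise off-diagonal eigenfunction asymptotic and (ii) the trace-class bookkeeping in the reduction step, with (i) being the principal technical challenge.
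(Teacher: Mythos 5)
The paper does not actually prove this lemma: it is quoted verbatim from Widom \cite[Thm.\ 4.3]{Wi:66} (with the constant corrected following Pushnitski and Fedele), so any self-contained argument is by definition a different route. Your architecture is sensible and in fact parallels the machinery the paper builds for its \emph{lower} bound (the diagonalization of $H$ from \cite{KS:16}, the eigenfunction asymptotics of Lemma~\ref{l:Pl_asympt}, the oscillatory-sum control in Lemma~\ref{l:secondtry2}). The reduction $\Tr(H_n^m)=\Tr(P_nH^mP_n)+O(1)$ via $\|P_nHQ_n\|_{HS}=O(1)$ and Schatten--H\"older is correct and is a clean way to pass from the truncated to the infinite operator.

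The genuine gap is your key eigenfunction claim. It is false that $|\psi_p(u)|^2\sim 1/(\pi p)$ pointwise: the correct asymptotic (cf.\ Lemma~\ref{l:Pl_asympt}, or Wilson's asymptotics for the $_4F_3$ polynomials) is
\begin{equation*}
|\psi_p(u)|^2=\frac{2\cos^2\bigl(u\log p+\arg A(iu/2)\bigr)}{\pi p}+O\bigl(p^{-3/2}\bigr),
\end{equation*}
and the oscillatory factor does \emph{not} converge as $p\to\infty$; no WKB or saddle-point analysis will remove it. Consequently you cannot simply ``sum $1/(\pi p)$''. The conclusion $D_n(u)=\tfrac{1}{\pi}\log n\,(1+o(1))$ is still true, but it requires the additional step of showing that the oscillatory part $\sum_{p<n}p^{-1}\cos(2u\log p+2\arg A)$ is $o(\log n)$ --- e.g.\ via the boundedness of the Dirichlet partial sums $\sum_{p\le n}p^{-1+2iu}$ for fixed $u>0$, or, as the paper does after integrating in $u$, via integration by parts against a smooth cutoff. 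A second, smaller gap: the dominating function for the final limit is not automatic. The asymptotics for $\psi_p$ are uniform only on compact $u$-sets, and $\int_0^\infty D_n(u)\,du=n\gg\log n$, so $D_n/\log n$ admits no integrable uniform bound on all of $(0,\infty)$; you need to split off large $u$ (equivalently, the spectral region $\sech(\pi u)\le\eps$) and control that piece separately, exactly as the paper does with the projection $1_{\le\eps}(H)$ in Lemma~\ref{l:secondtry23}. Both defects are repairable, but as written the central lemma you propose to prove is unprovable and the convergence step is unjustified.
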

	
	\begin{remark}
		Compared to the result stated in \cite{Wi:66}, we have an additional prefactor $1/2\pi$ in \eqref{e:moments}.
		This was missed in \cite{Wi:66} and pointed out to us by A.~Pushnitski and E.~Fedele, see \cite{F:19}.
	\end{remark}
	
	From the latter lemma we readily obtain an upper bound on the determinant $D_n$:
	
	\begin{corollary}[Upper bound]
		We obtain the bound 
		\begin{equation*}
		\limsup_{n\to\infty} \frac{\log D_n}{\log n}  \leq - \sum_{m\in\N} \frac{\mu_m}m= \frac 1 \pi \int_0^\infty \log\big(1- \sech(\pi u)\big) \de{u}. 
		\end{equation*}
	\end{corollary}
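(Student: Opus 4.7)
The plan is to exploit the fact that every term in the trace-log expansion of $\log D_n$ has a definite sign, which makes the upper bound a straightforward truncation argument. First I would observe that the finite Hilbert-type matrix $H_n$ is positive semi-definite: it is the compression of the infinite Hilbert matrix $H$ onto the first $n$ coordinates, and by the result of \cite{KS:16} cited above, $\sigma(H) = [0,1]$ so $H \geq 0$. Therefore the eigenvalues of $H_n$ all lie in $[0,1)$ (strict inequality by Lemma~\ref{l:Largest_eig}), so $\Tr(H_n^m) \geq 0$ for every $m$, and hence $-\Tr(H_n^m)/m \leq 0$.

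Since all summands in $\log D_n = -\sum_{m\in\N} \Tr(H_n^m)/m$ are nonpositive, truncating the series at any finite $M$ only increases the value. Dividing by $\log n > 0$ (for $n\geq 2$) and invoking Lemma~\ref{l:moments} term by term (the truncated sum has finitely many terms so the limit commutes with the finite sum), I obtain for every $M \in \N$
\begin{equation*}
\limsup_{n\to\infty} \frac{\log D_n}{\log n} \;\leq\; -\sum_{m=1}^{M} \frac{\mu_m}{m}.
\end{equation*}
Letting $M \to \infty$ then yields
\begin{equation*}
\limsup_{n\to\infty} \frac{\log D_n}{\log n} \;\leq\; -\sum_{m=1}^{\infty} \frac{\mu_m}{m},
\end{equation*}
provided the right-hand series converges.

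Finally, to identify the limit with the integral, I would interchange sum and integral using Tonelli's theorem (all integrands $\sech^m(\pi u)/m$ are nonnegative):
\begin{equation*}
\sum_{m=1}^{\infty} \frac{\mu_m}{m} \;=\; \frac{1}{\pi} \int_0^{\infty} \sum_{m=1}^{\infty} \frac{\sech^m(\pi u)}{m} \, \de{u} \;=\; -\frac{1}{\pi} \int_0^{\infty} \log\bigl(1 - \sech(\pi u)\bigr)\, \de{u},
\end{equation*}
where the inner sum is evaluated using \eqref{e:Log_series} with $x = \sech(\pi u) \in (0,1)$ for $u>0$. The remaining issue is that this integral converges: near $u=0$ one has $1-\sech(\pi u) \sim \pi^2 u^2/2$, so the integrand behaves like $2\log u$, which is integrable; for large $u$, $\sech(\pi u) \sim 2e^{-\pi u}$, so $\log(1-\sech(\pi u)) \sim -2e^{-\pi u}$, also integrable. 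There is essentially no obstacle here—the only subtle point is justifying the exchange of the $n\to\infty$ limit with the infinite sum, and the sign argument sidesteps this entirely by reducing it to a limit of finitely many terms followed by a monotone passage $M\to\infty$.
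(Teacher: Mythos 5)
Your proof is correct and follows the same route as the paper: since $H_n\geq 0$ all terms $-\Tr(H_n^m)/m$ are nonpositive, so truncating the trace-log series gives an upper bound, Lemma~\ref{l:moments} is applied to the finitely many remaining terms, and the limit $M\to\infty$ together with Tonelli and \eqref{e:Log_series} identifies the sum with the integral. Your added checks (positive semi-definiteness of $H_n$ via the compression of $H$, and integrability of $\log(1-\sech(\pi u))$ at $0$ and $\infty$) are exactly the details the paper leaves implicit.
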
	
	Next, we prove a lower bound.
	Since the matrix elements of $H$ are all positive we obtain the inequality
	\begin{equation*}
	\Tr\big( H_n^m\big) \leq \Tr\big( 1_n H^m 1_n\big),
	\end{equation*}
	where $1_n$ denotes the projection on $\ell^2(\{0,1,...n-1\})\subset \ell^2(\N_0)$. Hence we obtain for all $\eps>0$
	\begin{align}
	\log D_n 
	&\geq -\sum_{m\in\N} \frac{\Tr\big( 1_n H^m 1_n\big)}m\notag\\
	&\geq 
	-\sum_{m\in\N} \frac{\Tr\big( 1_n H^m 1_{>\varepsilon} (H) 1_n\big)}m - \sum_{m\in\N} \frac{\Tr\big( 1_n H^m 1_{\leq\varepsilon} (H)1_n\big)}m.			\label{e:lower_split}
	\end{align}
	Here $1_{>\eps}(H)$ is short for the spectral projection of $H$ on the set $\{x: x>\eps\}$. We first estimate the second term.
	
	\begin{lemma}\label{l:secondtry23}
		We obtain
		\begin{equation*}
		\limsup_{\varepsilon\to 0}\limsup_{n\to\infty}\frac1{\log n}\sum_{m\in\N} 
		\frac{\Tr\big( 1_n H^m 1_{\leq\varepsilon} (H)1_n\big)}m =0. 
		\end{equation*}
	\end{lemma}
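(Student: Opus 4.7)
The plan is to collapse the sum in $m$ into a single bounded spectral trace, dominate it by an operator inequality coming from monotonicity of $-\log(1-x)$, and then identify the limit via a Widom-type weak convergence of the relevant spectral measures.

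First, since $H$ commutes with $1_{\leq\varepsilon}(H)$ and $HP_{\varepsilon}:=H\cdot 1_{\leq\varepsilon}(H)$ satisfies $\|HP_{\varepsilon}\|\leq\varepsilon<1$, the Borel functional calculus together with positivity of each summand yields
\begin{equation*}
\sum_{m\in\N}\frac{\Tr(1_n H^m 1_{\leq\varepsilon}(H) 1_n)}{m}
= -\Tr\bigl(1_n\log(I-HP_{\varepsilon})1_n\bigr).
\end{equation*}
For $\lambda\in[0,\varepsilon]$ the elementary inequality $-\log(1-\lambda)\leq\lambda/(1-\varepsilon)$ lifts, via the spectral calculus and the positivity-preserving compression $A\mapsto 1_n A 1_n$, to
\begin{equation*}
\sum_{m\in\N}\frac{\Tr(1_n H^m 1_{\leq\varepsilon}(H) 1_n)}{m}
\leq \frac{1}{1-\varepsilon}\,\Tr(1_n HP_{\varepsilon}\,1_n).
\end{equation*}

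It now suffices to show $\Tr(1_n HP_{\varepsilon}\,1_n)/\log n \to \arcsin(\varepsilon)/\pi^2$ as $n\to\infty$, since this vanishes as $\varepsilon\to 0$. Introduce the cumulative spectral measure $d\mu_n:=\sum_{k<n}d\nu_{e_k}$ on $[0,1]$, so that $\int f\,d\mu_n=\Tr(1_n f(H)1_n)$ for every bounded Borel $f$, and the weighted finite measure $d\tilde{\mu}_n(\lambda):=\lambda\,d\mu_n(\lambda)$ whose total mass $\Tr(H_n)=\frac{1}{2\pi}\log n+O(1)$ follows by direct summation of $H_{kk}=1/[\pi(2k+\tfrac{1}{2})]$. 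Its $k$-th moment equals $\Tr(1_n H^{k+1}1_n)$; Lemma~\ref{l:moments} combined with the estimate
\begin{equation*}
\Tr(1_n H^{m}1_n)-\Tr(H_n^{m})=O_m(1),
\end{equation*}
(from a Cauchy--Schwarz bound at the at-least-two boundary edges of any closed walk of length $m$ in $\N_0$ starting in $\{0,\dots,n-1\}$ but visiting some index $\geq n$, exploiting $H_{ij}=1/[\pi(i+j+\tfrac{1}{2})]$) produces the moment convergence
\begin{equation*}
\frac{1}{\log n}\int_0^1 \lambda^k\,d\tilde{\mu}_n(\lambda)\xrightarrow[n\to\infty]{}\mu_{k+1}=\int_0^1\frac{\lambda^k\,d\lambda}{\pi^2\sqrt{1-\lambda^2}},
\end{equation*}
where the right-hand side is obtained from \eqref{e:moments} by the change of variable $\lambda=\sech(\pi u)$. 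Since the limit measure $d\rho_1(\lambda):=d\lambda/(\pi^2\sqrt{1-\lambda^2})$ is supported on the compact interval $[0,1]$ and determined by its moments (Hausdorff), this lifts to weak convergence $d\tilde{\mu}_n/\log n\rightharpoonup d\rho_1$. Testing against the indicator $1_{[0,\varepsilon]}$ (continuous $\rho_1$-a.e.) delivers $\Tr(1_n HP_{\varepsilon}\,1_n)/\log n\to \arcsin(\varepsilon)/\pi^2$, and combining with the previous inequality
\begin{equation*}
\limsup_{\varepsilon\to 0}\limsup_{n\to\infty}\frac{1}{\log n}\sum_{m\in\N}\frac{\Tr(1_n H^m 1_{\leq\varepsilon}(H) 1_n)}{m}\leq \limsup_{\varepsilon\to 0}\frac{\arcsin\varepsilon}{\pi^2(1-\varepsilon)}=0.
\end{equation*}

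The main technical hurdle is the bridging estimate $\Tr(1_n H^m 1_n)-\Tr(H_n^m)=O_m(1)$, which lifts Lemma~\ref{l:moments} from the finite-section operator $H_n$ to the compression $1_n H^m 1_n$. A naive bound merely gives $O(\log n)$, which is insufficient; one needs to exploit the off-diagonal decay of $H$ via Cauchy--Schwarz at two boundary edges of each escaping walk. Alternatively, one can work in the spectral representation of $H_\infty$ supplied by \cite{KS:16}, which is in any event invoked later in the paper for the lower bound, and carry out the weak convergence directly on the spectral side via the density of states $Q_n(u,u)$ of $1_n$ in the diagonalizing basis.
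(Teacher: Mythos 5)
Your proof is correct and takes essentially the same route as the paper: both arguments bound the tail sum by a constant times $\Tr\big(1_n H 1_{\leq\eps}(H)1_n\big)$ (you via the scalar inequality $-\log(1-\lambda)\leq \lambda/(1-\eps)$ on $[0,\eps]$, the paper via $H^m 1_{\leq\eps}(H)\leq \eps^{m-1}H$), and then identify the $\log n$ coefficient of that trace from the moment asymptotics of Lemma~\ref{l:moments} — you by weak convergence of the rescaled spectral measures plus Portmanteau, the paper by a continuous majorant $h_\eps$ and Stone--Weierstrass, which are interchangeable devices here. The only substantive point where you go beyond the paper is in flagging and bridging the gap between $\Tr\big(1_n H^m 1_n\big)$ and $\Tr\big(H_n^m\big)$, which the paper silently identifies; your sketched $O_m(1)$ bound is indeed correct (decompose each escaping closed walk by its first up-crossing and last down-crossing and use that $\|1_n H(1-1_n)\|_{\mathrm{HS}}$ is bounded uniformly in $n$, estimating the middle and outer blocks in operator norm), and would need to be written out in a final version.
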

	We prove this lemma in Subsection \ref{pf:lemma3}.
	In order to treat the first term on the left hand side of \eqref{e:lower_split} 
	we use the explicit diagonalization of the operator $H$ found in \cite[Sec. 4]{KS:16}. 
	\begin{lemma}
		We define for $l\in\N_0$ 
		\begin{align}\label{e:defPn}
		\hat P_{l} (x^2) 
		&:= \frac{4^l \big( \frac 1 4\big)_{(l)} \big( \frac 1 2\big)_{(l)} \big( \frac 3 4\big)_{(l)}}{l! \big(\frac 1 2\big)_{(2l)}} \textsubscript 4 F_3 \Big(-l, l+\frac 1 2, i 		\frac x 2, -i 	\frac x 2; \frac 	1 4, \frac 1 2, \frac 3 4;1\Big), 
		\end{align}
		where $\textsubscript p F_q$ denotes the generalized hypergeometric 
		function. Moreover, let
		\begin{equation*}
		\rho(x) := 2\sech(\pi x).
		\end{equation*}
		and $\mathcal H:= L^2((0,\infty), d \rho)$. Then $U:\ell^2(\N_0)\to \mathcal H$ 
		defined by $(U e_l)(x):= \hat P_{l}(x^2)$ is a unitary and
		\begin{equation*}
		(U H U^*f)(x) = \sech(\pi x)f(x),\qquad x>0,
		\end{equation*}	
		i.e. $U$ diagonalizes the operator $H$. 	
	\end{lemma}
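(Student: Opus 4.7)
My plan is to establish three claims, which together yield the lemma: (a) the polynomials $\hat P_l$ form an orthonormal family in $\mathcal H$; (b) this family is complete in $\mathcal H$; and (c) the operator identity $UHU^* = M_{\sech(\pi x)}$ holds, with $M_f$ denoting multiplication by $f$. Claims (a) and (b) make $U$ unitary, and (c) is then the asserted diagonalization.

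For (a) I would identify the ${}_4F_3$ appearing in \eqref{e:defPn} as the hypergeometric representation of a Wilson polynomial with parameters $(a,b,c,d)=(0,1/4,1/2,3/4)$ at argument $x/2$, whose parameter sum is $a+b+c+d=3/2$. The canonical Askey--Wilson weight
\begin{equation*}
	\left|\frac{\Gamma(ix/2)\Gamma(1/4+ix/2)\Gamma(1/2+ix/2)\Gamma(3/4+ix/2)}{\Gamma(ix)}\right|^2
\end{equation*}
simplifies via $|\Gamma(ix)|^2=\pi/(x\sinh\pi x)$, $|\Gamma(ix/2)|^2=2\pi/(x\sinh(\pi x/2))$, and Legendre duplication on the remaining $\Gamma$-factors to a constant multiple of $\sech(\pi x)$, matching $d\rho$. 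The explicit prefactor in \eqref{e:defPn} is chosen precisely to normalize so that $\|\hat P_l\|_{\mathcal H}=1$. For (b), because $d\rho$ decays exponentially all exponential moments of $\rho$ are finite, the Hamburger moment problem associated to $x^2$ on $(0,\infty)$ is determinate and polynomials in $x^2$ are dense in $\mathcal H$. Combined with (a) this makes $U$ an isometric surjection $\ell^2(\N_0)\to\mathcal H$.

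For (c), by sesquilinearity and continuity it suffices to check matrix elements in the basis $\{e_l\}$, i.e.
\begin{equation*}
	\frac{1}{\pi(k+l+1/2)} = \big\langle e_k, H e_l\big\rangle_{\ell^2} = \big\langle\hat P_k, \sech(\pi\cdot)\,\hat P_l\big\rangle_{\mathcal H} = 2\int_0^\infty \hat P_k(x^2)\hat P_l(x^2)\sech^2(\pi x)\,\de{x}
\end{equation*}
for all $k,l\in\N_0$. This is the main technical step. I would approach it via the integral representation $\frac{1}{\pi(k+l+1/2)} = \frac{1}{\pi}\int_0^1 t^{k+l-1/2}\,\de{t}$ combined with a Mellin--Barnes analysis: insert the contour representation of the ${}_4F_3$ in \eqref{e:defPn}, interchange the $x$-integral with the contour integrals (the resulting inner integral is computable by residues of $\sech^2$), and match the collected residues against the Beta-type right-hand side. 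An alternative is to verify the equivalent generating-function identity $\sum_l (\pi(k+l+1/2))^{-1}\hat P_l(t^2)=\sech(\pi t)\hat P_k(t^2)$, which is the distributional eigenvector relation $H\Psi_t=\sech(\pi t)\Psi_t$ for $\Psi_t=(\hat P_l(t^2))_l$.

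The main obstacle is step (c). Orthogonality and completeness of Wilson polynomials are textbook results in the Askey--Wilson scheme, but the identity tying $H$ to multiplication by $\sech(\pi x)$ is a separate moment computation, not a consequence of the orthogonality relations themselves. Since this computation is carried out explicitly in Sections~3--4 of \cite{KS:16}, the realistic route is to verify that our normalization of $\hat P_l$ coincides with theirs and then quote their result, supplying only the change of variables that identifies their spectral parameter with $\sech(\pi x)$ and the bookkeeping needed to reconcile conventions.
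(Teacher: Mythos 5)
The paper offers no proof of this lemma at all: it is imported verbatim from \cite[Sec.~4]{KS:16}, and your proposal, after sketching the standard Wilson-polynomial verification of orthonormality and the matrix-element identity, correctly converges on that same citation, so the approaches coincide. One small repair: your density argument in (b) is misstated, since the pushforward of $\sech(\pi x)\,\de{x}$ under $x\mapsto x^2$ does \emph{not} have finite exponential moments (it decays only like $e^{-\pi\sqrt{s}}$); determinacy instead follows from Carleman's condition, or more simply one notes that $\sech(\pi x)\,\de{x}$ has finite exponential moments in the variable $x$, so polynomials in $x$ are dense in $L^2(\R,\sech(\pi x)\de{x})$, and projecting onto even functions gives density of polynomials in $x^2$ on $(0,\infty)$.
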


	The latter lemma, Fubini's theorem and \eqref{e:Log_series} readily imply
		\begin{align}\label{e:trace_eq}
		&\sum_{m\in\N}\frac 1 m  \Tr\big(1_n H^m 1_{>\eps}(H) 1_n \big) \notag\\
		&= 
		2\sum_{m\in\N}\frac 1 m  \int_0^\infty \big(\sech(\pi x)\big)^m 1_{>\eps}(\sech(\pi x)) \sech(\pi x) \sum_{l=0}^{n-1} \big|  \hat P_l(x^2)\big|^2\de{x}\notag\\
		&=
		 -\int_0^{\frac{\sech^{-1}(\eps)}\pi}  \log\big( 1- \sech(\pi x)\big)  2 \sech(\pi x) \sum_{l=0}^{n-1} \big|  \hat P_l(x^2)\big|^2 \de{x}.
	\end{align}
	Next we are interested in the asymptotic behaviour of $ \hat P_l(x^2)$ as $l\to\infty$. 
	\begin{lemma}\label{l:Pl_asympt}
		Let $x>0$ be fixed. Then as $l\to\infty$ the asymptotics
		\begin{equation*}
		\hat P_l(x^2) =\sqrt{\frac{\cosh(\pi x)}{\pi l}} 
		\cos\left( x \log l + \arg A(i x/2)\right)
		\left(1+O(l^{-1})\right) + l^{-3/2}  R_l\left(x\right)
		\end{equation*}
		holds, where $\arg$ is the argument function, 
		\begin{equation*}
		A(i  x/2 ):=\frac  {2^{2 i x - 3/2} \cosh(\pi x)^{1/2} }{\pi^{3/2}}
		\end{equation*}
		and the $O$-term is independent of $x>0$ and
		\begin{equation*}
		\sup_{x\in[0,M]}\sup_{l\in \N_0} \left|R_l\left(x\right)\right| \leq r\left(M\right)
		\end{equation*}
		for some constant $r(M)$ depending on $M>0$. 
	\end{lemma}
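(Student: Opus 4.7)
The plan is to obtain the stated asymptotics by recognising $\hat P_l(x^2)$ as a (renormalised) Wilson polynomial and analysing it through a Mellin--Barnes integral representation. Specifically, the structure of \eqref{e:defPn} identifies $\hat P_l$ with the Wilson family with parameters $(a,b,c,d)=(0,1/4,1/2,3/4)$ at spectral variable $x/2$, and the prefactor involving $4^l\,(1/4)_{(l)}(1/2)_{(l)}(3/4)_{(l)}/\bigl(l!\,(1/2)_{(2l)}\bigr)$ is precisely the normalisation making the $\hat P_l$ orthonormal with respect to $d\rho$ (this is the content of the diagonalisation cited from \cite{KS:16}).

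The first step is to write the terminating, balanced $\textsubscript 4 F_3$ in \eqref{e:defPn} as a contour integral
\begin{equation*}
\hat P_l(x^2) \;=\; \frac{c_l}{2\pi i}\int_{\mathcal C} \frac{\Gamma(-s)\,\Gamma(l+s+1/2)\,\Gamma(s+ix/2)\,\Gamma(s-ix/2)}{\Gamma(s+1/4)\,\Gamma(s+1/2)\,\Gamma(s+3/4)}\,ds,
\end{equation*}
where $\mathcal C$ is a vertical line separating the poles of $\Gamma(-s)$ from those of $\Gamma(s\pm ix/2)$ and $c_l$ absorbs the prefactor in \eqref{e:defPn}. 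Applying Stirling to the $\Gamma(l+s+1/2)$ factor and the Pochhammer $(1/2)_{(2l)} = \Gamma(2l+1/2)/\Gamma(1/2)$ inside $c_l$ produces, after cancellation, an effective factor $l^{s-1/2}$ inside the integrand, up to $1+O(l^{-1})$.

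Next I would shift $\mathcal C$ leftward past the two poles at $s=\pm ix/2$ to the line $\Re s = -3/2$. The two residues contribute $l^{-1/2\pm ix}$ multiplied by ratios of gamma functions evaluated at $s=\pm ix/2$; combining the complex-conjugate pair gives the oscillatory term $\cos\bigl(x\log l+\arg A(ix/2)\bigr)$. To obtain the amplitude $\sqrt{\cosh(\pi x)/(\pi l)}$ and the explicit $A(ix/2)=2^{2ix-3/2}\cosh(\pi x)^{1/2}/\pi^{3/2}$, one simplifies the residue product $\Gamma(ix)\,\Gamma(-ix/2+1/4)\Gamma(-ix/2+1/2)\Gamma(-ix/2+3/4)$ using the reflection formula $\Gamma(z)\Gamma(1-z)=\pi/\sin(\pi z)$ together with the Gauss multiplication formula $\Gamma(z)\Gamma(z+1/2)=\sqrt{\pi}\,2^{1-2z}\Gamma(2z)$ (triplicated); after simplification the modulus collapses to $\sqrt{\cosh(\pi x)}$ up to explicit constants and the phase gathers into $\arg A(ix/2)$.

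The remaining integral along $\Re s = -3/2$ carries the error term $l^{-3/2}R_l(x)$, so the main obstacle is to verify the uniform bound $\sup_{x\in[0,M]}\sup_l|R_l(x)|\le r(M)$. This is done by estimating the integrand on the shifted contour: the factor $\Gamma(s+ix/2)\Gamma(s-ix/2)$ at $\Re s=-3/2$ is rewritten through the reflection formula so that the $x$-dependence sits in explicit $\sin$ and $\cosh$ functions bounded on $[0,M]\times\mathbb R$; the remaining $t$-dependence $(|t|=|\Im s|)$ decays like $e^{-\pi|t|/2}|t|^{-2}$ from Stirling, giving an $x$-independent integrable majorant. The $O(l^{-1})$ refinement comes from keeping the next term in the Stirling expansion of $\Gamma(l+s+1/2)$ at each of the two residues. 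The technically delicate part is packaging all of this so that $R_l(x)$ is continuous in $x$ and bounded independently of $l$, which is exactly what is needed for the subsequent dominated-convergence argument in \eqref{e:trace_eq}.
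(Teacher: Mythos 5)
Your strategy (re-deriving Wilson's asymptotics for the terminating balanced ${}_4F_3$ via a Mellin--Barnes integral, rather than citing them) is viable in principle and genuinely different from the paper's route, but the representation you start from is wrong in a way that matters. A contour integral whose only $l$-dependent gamma factor is $\Gamma(l+s+1/2)$ does not reproduce the \emph{terminating} series: summing the residues of $\Gamma(-s)$ at $s=0,1,2,\dots$ against your integrand yields a nonterminating ${}_3F_2$-type sum with no $(-l)_k$ in it. The correct integrand must also carry a factor $\Gamma(l+1)/\Gamma(l+1-s)$, whose reciprocal gamma kills the poles of $\Gamma(-s)$ beyond $s=l$ and produces $(-l)_k$ at the integer residues. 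This omission is not cosmetic: with it, the large-$l$ factor inside the integrand is $\frac{\Gamma(l+1)\,\Gamma(l+s+1/2)}{\Gamma(l+1-s)\,\Gamma(l+1/2)}\sim l^{2s}$, not $l^{s}$, so after absorbing the $l^{-1/2}$ coming from the normalisation the effective factor is $l^{2s-1/2}$ and the poles at $s=\pm ix/2$ produce $l^{-1/2\pm ix}$. With the integral exactly as you wrote it you would instead get $l^{-1/2\pm ix/2}$, i.e.\ $\cos\bigl(\tfrac x2\log l+\cdots\bigr)$ --- the wrong frequency. Your text asserts the correct contributions $l^{-1/2\pm ix}$, but that contradicts your own claimed ``effective factor $l^{s-1/2}$''; the two statements cannot both follow from the displayed representation.

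A second point you pass over is uniformity near $x=0$, which the lemma explicitly requires (the bound on $R_l$ is over $x\in[0,M]$). The poles at $s=\pm ix/2$ coalesce with the pole of $\Gamma(-s)$ at $s=0$ as $x\to0$, so no fixed vertical line separates them; each individual residue carries a factor $\Gamma(\pm ix)$ that blows up as $x\to0$, and only the \emph{sum} of the two residues stays bounded. Making the $O(l^{-1})$ and $l^{-3/2}R_l(x)$ error terms uniform through this coalescence needs an explicit argument (an indented contour and a joint treatment of the three nearby poles), not just an $x$-independent integrable majorant on the line $\Re s=-3/2$, which by itself is the easy part. For comparison, the paper sidesteps all of this: it quotes the asymptotic formula for the polynomial part, including its uniform error control, directly from Wilson \cite{W:91}, Eq.~(2.5), and only carries out the elementary Stirling computation \eqref{e:asympt_Ster} for the normalising prefactor. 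If you want a self-contained proof along your lines, fixing the representation and handling the $x\to0$ coalescence are the two things you must add.
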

We prove this in Subsection \ref{pf:lemma4}. 
	\begin{lemma}\label{l:secondtry2} 
		Let $\eps>0$. Then we obtain the upper bound
		\begin{align*}
		\limsup_{n\to\infty} \frac{\sum_{m\in\N} \frac 1 m  \Tr\big(1_n1_{>\eps}(H) H^m 1_n \big)}{\log n} 
		\leq 
		-\frac 1 \pi \int_0^\infty   \log (1- \sech\left(\pi x\right))   \de{x} .  
		\end{align*}
	\end{lemma}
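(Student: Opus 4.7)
The plan is to start from the explicit identity \eqref{e:trace_eq} and extract the large-$n$ asymptotics of the kernel sum $S_n(x):=\sum_{l=0}^{n-1}\bigl|\hat P_l(x^2)\bigr|^2$ using Lemma~\ref{l:Pl_asympt}. Fix $x>0$. Squaring the asymptotic expansion of $\hat P_l(x^2)$ and using $\cos^2\theta=\tfrac12(1+\cos2\theta)$ gives, for the phase $\varphi(x):=2\arg A(ix/2)$,
\[
|\hat P_l(x^2)|^2=\frac{\cosh(\pi x)}{2\pi l}+\frac{\cosh(\pi x)}{2\pi l}\cos\bigl(2x\log l+\varphi(x)\bigr)+E_l(x),
\]
where $|E_l(x)|\le C_M\, l^{-2}$ uniformly for $x\in[0,M]$ with $M:=\pi^{-1}\sech^{-1}(\eps)$, the error coming from the $(1+O(l^{-1}))$ factor, the cross term involving $l^{-3/2}R_l(x)$, and the squared remainder. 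Summed over $l=1,\dots,n-1$ the first term contributes $\tfrac{\cosh(\pi x)}{2\pi}(\log n+O(1))$; the oscillatory second term is bounded uniformly in $n$ for each positive $x$ by comparison with $\int_1^n t^{-1}\cos(2x\log t)\,dt=(2x)^{-1}\sin(2x\log n)$ plus Abel summation; and $\sum_l E_l(x)=O(1)$. Therefore, for every $x>0$,
\[
\lim_{n\to\infty}\frac{S_n(x)}{\log n}=\frac{\cosh(\pi x)}{2\pi}.
\]

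To pass the limit under the integral in \eqref{e:trace_eq}, I would supply a dominating function. Using Lemma~\ref{l:Pl_asympt} and the elementary inequality $(a+b)^2\le 2a^2+2b^2$,
\[
|\hat P_l(x^2)|^2\le \frac{C\cosh(\pi x)}{l}+\frac{C\, r(M)^2}{l^3},\qquad x\in[0,M],
\]
so $S_n(x)/\log n$ is bounded by a constant uniformly in $x\in[0,M]$ for $n$ large. Multiplied by $2\sech(\pi x)\,|\log(1-\sech(\pi x))|$, this produces an integrable dominating function on $[0,M]$: $\sech(\pi x)\le 1$ and near $x=0$ one has $\log(1-\sech(\pi x))\sim 2\log(\pi x/2)$, which is locally integrable.

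Applying dominated convergence to \eqref{e:trace_eq} and using the identity $2\sech(\pi x)\cdot\tfrac{\cosh(\pi x)}{2\pi}=\tfrac{1}{\pi}$ yields
\[
\lim_{n\to\infty}\frac{1}{\log n}\sum_{m\in\N}\frac{\Tr\bigl(1_n H^m 1_{>\eps}(H)1_n\bigr)}{m}=-\frac{1}{\pi}\int_0^{\sech^{-1}(\eps)/\pi}\log\bigl(1-\sech(\pi x)\bigr)\,dx.
\]
Since $\log(1-\sech(\pi x))\le 0$ for all $x>0$, extending the integration domain to $[0,\infty)$ only enlarges the right-hand side, giving the claimed upper bound. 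The chief technical subtlety is controlling the oscillatory Dirichlet-type sum $\sum_{l\le n} l^{-1}\cos(2x\log l+\varphi(x))$ to identify the correct pointwise limit; this is where the cancellation is genuinely needed. Fortunately for the domination step we do not need any such cancellation: the crude bound $|\cos^2\theta|\le 1$ applied to Lemma~\ref{l:Pl_asympt} already produces an $x$-uniform estimate on $S_n/\log n$, so the bottleneck is purely the pointwise analysis, which the explicit integral substitution handles cleanly.
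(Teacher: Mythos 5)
Your proof is correct, and it reaches the stated bound (in fact the exact limit over the truncated interval) by a route that differs from the paper's in the one place where real work is needed, namely the oscillatory part of $\cos^2(x\log l+\arg A(ix/2))=\tfrac12+F(x,l)$. The paper keeps the sum over $l$ inside the $x$-integral and kills $\sum_l l^{-1}F(x,l)$ by a non-stationary-phase argument in $x$: it inserts a smooth cutoff $g_\delta$ vanishing near the endpoints of $\bigl[0,\tfrac{\sech^{-1}(\eps)}{\pi}\bigr]$, integrates by parts to get $O\bigl(1/(\log l)^2\bigr)$ per mode (hence an $O(1)$ total after summing $1/(l(\log l)^2)$), controls the endpoint strips by the trivial bound $|F|\le 1$, and only recovers the claimed inequality after a final $\delta\to 0$ limit. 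You instead fix $x>0$, bound the Dirichlet-type sum $\sum_{l\le n}l^{-1}\cos(2x\log l+\varphi(x))$ by $O_x(1)$ via comparison with $\int_1^n t^{-1}\cos(2x\log t+\varphi)\,\de t=\tfrac1{2x}\sin(\cdot)\big|_1^n$ (the Euler--Maclaurin error being absolutely summable since the derivative of the summand is $O((1+x)t^{-2})$), deduce the pointwise limit $S_n(x)/\log n\to\cosh(\pi x)/(2\pi)$, and finish with dominated convergence using the crude $\cos^2\le 1$ bound for the dominating function; the fact that your $O_x(1)$ constant blows up like $1/x$ as $x\to0$ is harmless because domination does not use the cancellation. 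Your version avoids the cutoff and the $\delta\to 0$ limit entirely and yields an equality rather than just an upper bound on $[0,\sech^{-1}(\eps)/\pi]$; the paper's version localizes the difficulty in $x$ rather than in $l$ but is otherwise of comparable strength. Both correctly reduce to the integrability of $\log(1-\sech(\pi x))$ near $x=0$ and to positivity of $-\log(1-\sech(\pi x))$ to extend the integral to $[0,\infty)$. I see no gap in your argument.
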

We prove this in Subsection \ref{pf:lemma4}. 
The bound \eqref{e:lower_split} implies that
		\begin{align*}
		\liminf_{n\to\infty} \frac{\log D_n}{\log n} 
		\geq
		& -\limsup_{\eps\to 0} 
		\limsup_{n\to\infty}\frac 1 {\log n}\notag\\
		&\times\Big(\sum_{m\in\N} \frac{\Tr\big( 1_n H^m 1_{>\varepsilon} (H) 1_n\big)}m + \sum_{m\in\N} \frac{\Tr\big( 1_n H^m 1_{\leq\varepsilon} (H)1_n\big)}m\Big).
		\end{align*}
	From Lemma \ref{l:secondtry23}  and Lemma \ref{l:secondtry2} the assertion follows. 
	
\end{proof}

\section{Proof of Lemma~\ref{l:CJPD}}\label{pf:lemma1}

In this Section we assume that if the region of integration is not specified, it is over
whole space of corresponding dimension.
\begin{lemma}\label{l:MC_distr}
	Let $O \in O\left(N+\ell\right)$ be an orthogonal matrix drawn randomly
	according to \eqref{e:measure_O}. Then the joint distribution of the
	top left minor $M_N$ and bottom left minor $C_{\ell\times N}$ (cf. \eqref{e:O_split})
	is given by
	\begin{equation}\label{e:MC_distr}
		\de{P\left(M_N,C_{\ell\times N}\right)} = \frac{v_{\ell}}{v_{N+\ell}} 
		\delta \left(M_N^TM_N+ C_{\ell \times N}^T C_{\ell \times N} - I_N\right)
		\De{M_N}\De{C_{\ell \times N}}
	\end{equation}	
	where $\De{M_N}$ and $\De{C_{\ell \times N}}$ denote Lebesgue measure on $\R^{N\times N}$, respectively $\R^{\ell\times N}$.
\end{lemma}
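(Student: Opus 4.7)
The plan is to start from the Haar-measure representation $\de{\mu(O)} = v_{N+\ell}^{-1}\, \delta(O^T O - I_{N+\ell})\, \De{O}$ and use the block decomposition of $O$ to split the matrix $\delta$-function. Writing
\begin{equation*}
O^T O - I_{N+\ell} = \begin{pmatrix}
M_N^T M_N + C_{\ell \times N}^T C_{\ell \times N} - I_N & M_N^T B_{N\times\ell} + C_{\ell\times N}^T D_\ell \\
\bigl(M_N^T B_{N\times\ell} + C_{\ell\times N}^T D_\ell\bigr)^T & B_{N\times\ell}^T B_{N\times\ell} + D_\ell^T D_\ell - I_\ell
\end{pmatrix},
\end{equation*}
the three blocks consist of independent scalar constraints ($N(N+1)/2$, $N\ell$ and $\ell(\ell+1)/2$ in number, summing to $(N+\ell)(N+\ell+1)/2$), so
\begin{equation*}
\delta(O^T O - I_{N+\ell}) = \delta\bigl(M_N^T M_N + C^T C - I_N\bigr)\, \delta\bigl(M_N^T B + C^T D\bigr)\, \delta\bigl(B^T B + D^T D - I_\ell\bigr),
\end{equation*}
with $C=C_{\ell\times N}$, $B=B_{N\times\ell}$, $D=D_\ell$.

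Next I would marginalise by integrating over $B$ and $D$ while holding $(M_N,C)$ fixed on the constraint set where the first $\delta$-factor is supported. On that set the $(N+\ell)\times N$ matrix $\tilde V:=\binom{M_N}{C}$ has orthonormal columns, so choose any $(N+\ell)\times\ell$ matrix $V$ whose columns complete those of $\tilde V$ to an orthonormal basis of $\R^{N+\ell}$, i.e.\ $(\tilde V\mid V)\in O(N+\ell)$. Perform the change of variables
\begin{equation*}
U:=\binom{B}{D} = \tilde V S + V R, \qquad S\in\R^{N\times\ell},\ R\in\R^{\ell\times\ell},
\end{equation*}
whose Jacobian equals $|\det(\tilde V\mid V)|^{\ell}=1$. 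The constraints rewrite as $M_N^T B + C^T D = \tilde V^T U = S$ and $B^T B+D^T D = U^T U = S^T S + R^T R$.

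Integrating the middle $\delta$-factor in the $S$-variable collapses it to $S=0$, after which the third $\delta$-factor becomes $\delta(R^T R - I_\ell)$, and integrating that over $\De{R}$ produces exactly $v_\ell$ by definition \eqref{e:measure_OO}. Therefore
\begin{equation*}
\int \delta(O^T O - I_{N+\ell})\, \De{B}\De{D} = v_\ell\, \delta\bigl(M_N^T M_N + C^T C - I_N\bigr),
\end{equation*}
and multiplication by $v_{N+\ell}^{-1}\De{M_N}\De{C}$ yields \eqref{e:MC_distr}. The only non-automatic step is the change of variables $U=\tilde V S + V R$ together with the corresponding bookkeeping of dimensions ($(N+\ell)\ell = N\ell + \ell^2$) and of the symmetric/non-symmetric nature of the three $\delta$-factors; once this is in place the remaining computations are routine.
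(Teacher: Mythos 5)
Your argument is correct, and the key step is carried out by a genuinely different (and cleaner) route than the paper's. Both proofs begin identically: block-decompose $O^TO-I_{N+\ell}$ and factor the matrix $\delta$-function into the three blocks. The paper then integrates out $B_{N\times\ell}$ by solving the linear constraint $M_N^TB+C^TD=0$ for $B$, which requires $M_N$ to be invertible and produces a Jacobian $\det^{-\ell}M_N$; it next rescales $D_\ell\mapsto \big(I_\ell+C M_N^{-1}M_N^{-T}C^T\big)^{-1/2}\hat D_\ell$, producing a second determinant, and finally must verify that the two determinants cancel on the constraint surface $M_N^TM_N+C^TC=I_N$. You instead make a single orthogonal change of variables on the joint column space, writing $U=\binom{B}{D}=\tilde V S+VR$ with $(\tilde V\mid V)\in O(N+\ell)$; this has unit Jacobian, turns the cross constraint into $\delta(S)$ and the corner constraint into $\delta(R^TR-I_\ell)$, whence $v_\ell$ drops out immediately. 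Your version buys two things: it never needs $M_N$ to be invertible (the paper's manipulation is only valid a.e.), and it dispenses entirely with the determinant-cancellation computation. The only caveat, which applies equally to the paper's formal $\delta$-calculus, is that the orthonormal completion $V$ only exists on the support of the first $\delta$-factor, so the evaluation of the $(B,D)$-integral is legitimate precisely because it is multiplied by $\delta\big(M_N^TM_N+C^TC-I_N\big)$; you note this, and your bookkeeping of the $N(N+1)/2+N\ell+\ell(\ell+1)/2$ scalar constraints is consistent with the normalization \eqref{e:measure_OO}.
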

\begin{proof}[Proof of Lemma~\ref{l:MC_distr}]
	Block decomposition of the matrix $O$ yields
	\begin{equation*}
		O^TO = 
		\left(
		\begin{array}{cc}
			M_N^TM_N+ C_{\ell \times N}^T C_{\ell \times N} & 
			M_N^TB_{N\times \ell}+ C_{\ell \times N}^T D_{\ell}\\
			B_{N \times \ell}^TM_N+ D^T_{\ell} C_{\ell \times N} & 
			B_{N \times \ell}^T B_{N \times \ell}+D_{\ell}^T D_{\ell}
		\end{array}
		\right),
	\end{equation*}
	and the corresponding measure on the set of tuples $M_N,C_{\ell\times N}$ now can be rewritten
	as
	\begin{multline*}
		\frac{\de{P\left(M_N,C_{\ell\times N}\right)}}{\De{M_N}\De{C_{\ell \times N}}} =
		\frac{1}{v_{N+\ell}} 
		\int \delta \left(	M_N^TM_N+ C_{\ell \times N}^T C_{\ell \times N} - I_N\right) 
		\\
		\delta \left(M_N^TB_{N\times \ell}+ C_{\ell \times N}^T D_{\ell}\right)
		\delta \left(B_{N \times \ell}^T B_{N \times \ell}+D_{\ell}^T D_{\ell}-I_{\ell}\right)
		\de{B_{N \times \ell}}\de{D_{\ell}}.
	\end{multline*}
	Integration over $dB_{N \times \ell}$ gives
	\begin{multline*}
		\frac{\de{P\left(M_N,C_{\ell\times N}\right)}}{\De{M_N}\De{C_{\ell \times N}}} = \frac{1}{v_{N+\ell}} 
		\int \delta \left(	M_N^TM_N+ C_{\ell \times N}^T C_{\ell \times N} - I_N\right) 
		\\			  
		\delta \left(D_{\ell}^T C_{\ell \times N}M^{-1}_N M^{-T}_NC^T_{\ell \times N} 
		D_{\ell}+D_{\ell}^T D_{\ell}-I_{\ell}\right)
		\det^{-\ell} M_N \de{D_{\ell}}.
	\end{multline*}	
	A change of variables 
	$D_{\ell} = \left(I_{\ell}+C_{\ell\times N}M^{-1}_NM^{-T}_NC^T_{\ell \times 
		N}\right)^{-1/2} \hat{D}_{\ell}$ yields
	\begin{multline*}
		\frac{\de{P\left(M_N,C_{\ell\times N}\right)}}{\De{M_N}\De{C_{\ell \times N}}}
		= \frac{1}{v_{N+\ell}} \int
		\delta \left(M_N^TM_N+ C_{\ell \times N}^T C_{\ell \times N} - I_N\right)
		\\
		\delta \left(\hat{D}_{\ell}^T\hat{D}_{\ell} - I_{\ell}\right)
		\det^{-\ell/2} \left(I_{\ell}+C_{\ell\times N}M^{-1}_NM^{-T}_NC_{\ell \times 
			N}^T\right)
		\det^{-\ell} M_N
		\de{\hat{D}_{\ell}}.
	\end{multline*}
	Integrating over $\hat{D}_{\ell}$ gives $v_{\ell}$, see
	\eqref{e:measure_OO}. The condition $M_N^TM_N+ 
	C_{\ell \times N}^T C_{\ell \times N} = I_N$ implies that the two determinants in the latter integrand cancel which can be seen as follows
	\begin{align*}
		\det \left(I_{\ell}+C_{\ell\times N}M^{-1}_NM^{-T}_NC_{\ell \times N}^T\right)
		&=
		\det \left(I_{\ell}+C_{\ell\times N}
		(I_{N} - C_{\ell \times N}^TC_{\ell \times N})^{-1}C_{\ell \times N}^T
		\right)
		\\
		&= \det \left(I_{\ell}-C_{\ell \times N}C_{\ell \times N}^T\right)^{-1}\\
		&= \det^{-1}\left(I_N-C_{\ell \times N}^TC_{\ell \times N}\right)
		= \det^{-2} M_N.
	\end{align*}	
	And the statement is proved.
\end{proof}

\begin{proof}[Proof of Lemma~\ref{l:CJPD}]
	We start with the distribution of $M_N$ given by \eqref{e:denMN} (cf. \eqref{e:MC_distr})
	and try to integrate out all variables of $M_N$ except of its eigenvalues.
	Below we follow a method described by Edelman \cite{Ed:97}.
	
	We first note that any matrix can be uniquely written as a special 
	product called real Schur decomposition. 
	Let 
	\begin{equation*}
		\mathrm{spec }\, M_N = \left\{\lambda_1,\lambda_2,\ldots\lambda_L, z_1,\overline{z_1},
		z_2,\overline{z_2},\ldots,z_M,\overline{z_M} \right\},
	\end{equation*}
	be the ordered set of eigenvalues of $M_N \in \mathcal{X}_{L,M}$. 
	Any such matrix can be uniquely decomposed into a product of real matrices 
	\begin{equation*}
	M_N = O
	\left(
	\begin{array}{cccccccc}
	\lambda_1 & R_{1,2} & .. & R_{1,L} & R_{1,L+1} & .. & .. & R_{1,L+M} \\
	0 & \lambda_2 & .. & R_{2,L} & R_{2,L+1} & .. & .. & R_{2,L+M} \\
	\vdots & .. & .. & \vdots & .. & .. & .. & \vdots \\
	0 & .. & 0 & \lambda_L & R_{L,L+1} & ..& .. & R_{L,L+M} \\
	0 & .. & .. & 0 & \Lambda_{1} & R_{L+1, L+2} &.. & R_{L+1,L+M} \\
	0 & .. & .. & 0 & 0 & \Lambda_2 & .. & 
	R_{L+2,L+M} \\
	\vdots & .. & .. & \vdots & \vdots & .. & .. & \vdots \\
	0 & .. & .. & 0 & 0 & .. & 0 & \Lambda_M \\				
	\end{array}
	\right)
	O^T,
	\end{equation*}
	for an orthogonal matrix $O \in O\left(N\right)$. Here
	$\Lambda_1,\Lambda_2,\ldots,
	\Lambda_M$ are $2\times 2$ blocks of the form
	\begin{equation*}
	\left(
	\begin{array}{cc}
	\alpha_j & \beta_j\\
	-\gamma_j & \alpha_j
	\end{array}
	\right)		
	\end{equation*}
	with $\beta_j \geq \gamma_j$ and $\beta_j\gamma_j > 0$,
	corresponding to complex eigenvalues $z_j,\overline{z_j} 
	= \alpha_j \pm i\sqrt{\beta_j\gamma_j}$ ordered in
	lexicographical order, and finally
	\begin{equation*}
	R_{j,k} = \left\{
	\begin{array}{ccl}
	1\times 1 & \mbox{blocks} & j<k\leq L,\\
	1\times 2 & \mbox{blocks} & j\leq L < k,\\
	2\times 2 & \mbox{blocks} & L<j<k.
	\end{array}
	\right.
	\end{equation*}
	\begin{remark}
		In fact the decomposition is not unique. One can change
		$O\mapsto O \cdot O_d$, where $O_d$ is a diagonal orthogonal 
		matrix. However, we will make it unique by assuming that all elements
		in the first row of $O$ are positive.
	\end{remark}
	Next we use a result of Edelman regarding Schur decomposition 
	\begin{theorem}\label{t:Jacobian}[Theorem~5.1, \cite{Ed:97}]
		Let $M_N$ be written in form of the Schur decomposition
		$M_N = O \left(Z+R\right)O^T$. The Jacobian for the change
		of variables is given by
		\begin{align*}
		\De{M_N} = 2^M
		&\prod\limits_{1\leq j<k \leq L}
		\left|\lambda_j-\lambda_k\right|
		\prod\limits_{\substack{1\leq j\leq L \\ 1\leq k \leq M}}
		\left|\lambda_j-z_k\right|^2
		\prod\limits_{1\leq j<k \leq M}
		\left|z_j-z_k\right|
		\left|z_j-\overline{z_k}\right| 
		\\
		&\prod\limits_{1\leq j\leq M}\left(\beta_j-\gamma_j\right)
		\prod\limits_{j=1}^L
		\de{\lambda_j}
		\prod\limits_{k=1}^M
		\de{\Lambda_k}
		\De{R}\De{_H O},
		\end{align*}
		where $z_j,\overline{z_j}$ is the pair of eigenvalues
		corresponding to the block $\Lambda_j$,
		\begin{equation*}
		\de{\Lambda_j} = \de{\alpha_j}\de{\beta_j}\de{\gamma_j},
		\end{equation*} 
		$\De{R}$ is the product over all $N^2/2-N/2-M$ real parameters
		of $R$ and $\De{_H O}$ is the product
		over all independent elements of the antisymmetric matrix
		$O^T\De{O}$, which is the natural element of integration 
		(for Haar measure) over the space of orthogonal matrices.
	\end{theorem}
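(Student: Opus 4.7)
The plan is to write $M_N = O(Z+R)O^T$ and read off the Jacobian from the induced linear map on tangent vectors. Differentiating and conjugating by $O$ gives
\begin{equation*}
O^T\, \De{M_N}\, O = [A,\,Z+R] + \De{Z} + \De{R},
\end{equation*}
where $A := O^T\De{O}$ is antisymmetric because $O \in O(N)$. Since $M \mapsto O^T M O$ has unit Jacobian on $\R^{N\times N}$, it suffices to compute the Jacobian of the linear map sending the independent variables $(A_{j>k},\De{Z},\De{R})$ to the right-hand side above; the independent entries of $A$ below the block diagonal are precisely the parameters recorded by $\De{_H O}$.

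The key structural observation is that the three pieces of $O^T\,\De{M_N}\,O$—strictly block-lower, block-diagonal, and strictly block-upper—are fed by disjoint sets of free parameters, up to a mixing that is triangular in the natural ordering of block pairs by increasing separation. The strictly block-upper part absorbs $\De{R}$ plus an upper-triangular commutator contribution and therefore contributes trivially to the Jacobian. The block-diagonal part is fed by $\De{Z}$ together with the intra-block pieces of $A$: for each $2\times 2$ block $\Lambda_j$ a short computation yields $[A_{jj},\Lambda_j] = a(\beta_j-\gamma_j)\,\mathrm{diag}(1,-1)$ with $A_{jj} = \bigl(\begin{smallmatrix}0 & a\\ -a & 0\end{smallmatrix}\bigr)$, and the resulting local $4\times 4$ change of variables $(a,\De{\alpha_j},\De{\beta_j},\De{\gamma_j}) \mapsto (\Lambda_j)_{\text{block}}$ has Jacobian $2(\beta_j-\gamma_j)$. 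Multiplying over the $M$ complex blocks produces the $2^M\prod_j(\beta_j-\gamma_j)$ prefactor.

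The more delicate contribution comes from the strictly block-lower part, which is fed only by the off-diagonal blocks $A_{jk}$ with $j>k$ through the Sylvester-type relation
\begin{equation*}
A_{jk}\;\longmapsto\; A_{jk}\,(Z+R)_{kk} - (Z+R)_{jj}\,A_{jk} + (\text{terms involving }A_{j'k'}\text{ with }j'-k' > j-k).
\end{equation*}
Ordering block pairs by increasing separation makes this map block-triangular, so its Jacobian factorises as a product of local determinants over pairs. For two real blocks the local factor is $|\lambda_j-\lambda_k|$. For a real block and a complex block the local Sylvester map acts on a $2$-dimensional space of entries and has determinant $\det(\lambda_j I_2 - \Lambda_k) = (\lambda_j - \alpha_k)^2 + \beta_k\gamma_k = |\lambda_j - z_k|^2$.

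The principal obstacle is the complex-complex case, where $A_{jk}$ is an unconstrained $2\times 2$ block and the Sylvester map, in vectorised form, has matrix representation $\Lambda_k^T\otimes I_2 - I_2\otimes \Lambda_j$ on $\R^4$. Its spectrum over $\C$ is $\{z_k - z_j,\, z_k - \overline{z_j},\, \overline{z_k} - z_j,\, \overline{z_k} - \overline{z_j}\}$, and pairing complex-conjugate factors reduces the determinant to the form involving $|z_j - z_k|$ and $|z_j - \overline{z_k}|$ advertised in the theorem. The cleanest implementation is to complexify both blocks simultaneously, diagonalise them over $\C$, evaluate the Sylvester determinant as a product of linear factors in the complex eigenvalues, and then track the real Jacobian of the complexification. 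Assembling these local determinants over all block pairs, together with the intra-block and block-upper contributions and the volume $\De{_H O}$ associated with the below-diagonal entries of $A$, yields the claimed factorised form of $\De{M_N}$.
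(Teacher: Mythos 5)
The paper offers no proof of this statement (it is quoted from \cite{Ed:97}), so your derivation stands on its own. Your overall strategy is the standard and correct one: differentiate $M_N=O(Z+R)O^T$, conjugate to get $O^T\De{M_N}\,O=[A,Z+R]+\De{Z}+\De{R}$ with $A=O^T\De{O}$ antisymmetric, order block pairs by increasing separation so that the induced linear map is block-triangular, and multiply the local determinants. The intra-block computation $[A_{jj},\Lambda_j]=a(\beta_j-\gamma_j)\,\mathrm{diag}(1,-1)$ with resulting local factor $2(\beta_j-\gamma_j)$, the real--real factor $|\lambda_j-\lambda_k|$, and the real--complex factor $\det(\lambda_j I_2-\Lambda_k)=(\lambda_j-\alpha_k)^2+\beta_k\gamma_k=|\lambda_j-z_k|^2$ are all correct, as is the observation that the strictly block-upper part contributes trivially.

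The complex--complex step, however, does not close. You correctly identify the spectrum of the Sylvester map $X\mapsto X\Lambda_k-\Lambda_jX$ on the $4$-dimensional space of real $2\times2$ blocks as $\{z_k-z_j,\,z_k-\overline{z_j},\,\overline{z_k}-z_j,\,\overline{z_k}-\overline{z_j}\}$, but the product of these four eigenvalues is $|z_j-z_k|^2\,|z_j-\overline{z_k}|^2$: pairing complex-conjugate factors yields squared moduli, not the first powers displayed in the theorem. No correct manipulation can reduce the determinant of a $4\times4$ map that is linear in the entries of $\Lambda_j,\Lambda_k$ (hence homogeneous of degree $4$ in the eigenvalues) to the degree-$2$ quantity $|z_j-z_k|\,|z_j-\overline{z_k}|$; a scaling check $M_N\to tM_N$, matching $t^{N^2}$ on the left against the degrees of $\de{\lambda},\de{\Lambda},\De{R}$ on the right, forces degree $4$ per complex--complex pair. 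So either your final assertion at this step is false, or --- as your own computation and the appearance of the full Vandermonde $|\Delta(\vec{\lambda}\cup\vec{Z})|$ in the statement of Lemma~\ref{l:CJPD} both indicate --- the displayed formula is missing the squares on the complex--complex factors. As written, you assert a conclusion that contradicts the eigenvalue computation you just performed; you need to carry the squares through (so the product reads $\prod_{1\leq j<k\leq M}|z_j-z_k|^2|z_j-\overline{z_k}|^2$) rather than force agreement with the printed statement.
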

	For any function $\widehat{F}: \mathcal{M}\left(\R^N\right) \to \C$, where $\mathcal M(\R^N)$ is the set of all $N\times N$ matrices, 	its average 
	over $\mathcal{O}^{\ell}_N$ is given by
	\begin{equation*}
	\big\langle \widehat{F} \big\rangle=		
	\frac{v_{\ell}}{v_{N+\ell}} 
	\int
	\widehat{F}\left(M_N\right)	
	\delta \left(M_N^TM_N+ C_{\ell \times N}^T C_{\ell \times N} - I_N\right)
	\De{M_N}\De{C_{\ell \times N}}.
	\end{equation*}
	This average can be naturally splitted into averages over the disjoint sets 
	$\mathcal{X}_{L,M}$.
	If one assumes that $\widehat{F}$ depends
	only on the eigenvalues, i.e. it can be written as
	\begin{equation*}
	\widehat{F}\left(M_N\right) = F\left(\lambda_1,\ldots,\lambda_L,
	x_1+iy_1,\ldots,x_M+iy_M\right)
	\end{equation*}
	for some function $F:\R^L\times \C_+^M \to \C$, then 
	the joint conditional eigenvalue distribution $p^{\left(L,M\right)}_{\ell}$ is 
	defined by the identity
	\begin{equation*}
		\big\langle \widehat{F} \big\rangle_{\mathcal{X}_{L,M}} = 
		\int\limits_{\R^L}\int\limits_{\C_+^M}
		F\left(\vec{\lambda},\vec{Z}\right) p^{\left(L,M\right)}_{\ell}
		\left(\vec{\lambda},\vec{Z}\right)\de{\vec{\lambda}}\de{\vec{Z}}.
	\end{equation*}
	We start with
	changing variables from $M_N$ to the triplet $\left(O,Z,R\right)$ and use
	Theorem~\ref{t:Jacobian} to get
	\begin{align}
	\big\langle \widehat{F} \big\rangle_{\mathcal{X}_{L,M}}
	=\ & 2^M\frac{v_{\ell}}{v_{N+\ell}} 
	\int\limits_{O\left(N\right)}\De{_H O}
	\int\limits_{\R^{\binom{N}{2}-M}} \De{R}
	\int\limits_{\R^{\ell N}}
	\De{C_{\ell \times N}}
	\int\limits_{\R^L_>} 
	\de{\vec{\lambda}}\int\limits_{\R^M_>\times\left(\R_>^2\right)^{M}}\de{\vec{\Lambda}}	
	\notag
	\\
	&	
	\prod\limits_{k=1}^M 
	\left(\beta_k-\gamma_k\right)
	F\left(Z\right)
	\Delta\left(Z\right)
	\delta \left(\left(Z^T+R^T\right)\left(Z+R\right) + 
	C_{\ell \times N}^TC_{\ell \times N} - I_N\right),
	\label{e:F_average}
	\end{align}
	where $\R_>^L:=\big\{(x_1,...,x_L)\subset \R^L: x_1\leq x_2\leq...\leq x_L\big\}$ and $(\R^2_{>})^M$ is defined accordingly and
	\begin{equation*}
	\Delta\left(Z\right) = \prod\limits_{1\leq j<k \leq L}
	\left|\lambda_j-\lambda_k\right|
	\prod\limits_{\substack{1\leq j\leq L \\ 1\leq k \leq M}}
	\left|\lambda_j-z_k\right|^2
	\prod\limits_{1\leq j<k \leq M}
	\left|z_j-z_k\right|
	\left|z_j-\overline{z_k}\right|.
	\end{equation*}
	Now integration over the orthogonal group gives the prefactor $v_N$. Next we integrate over $R$.
	\begin{proposition}\label{p:R_integration}
	Integration over $R$ in \eqref{e:F_average} gives
		\begin{align}
			\big\langle \widehat{F} \big\rangle_{\mathcal{X}_{L,M}}
			=& \ 2^M\frac{v_{\ell}v_N}{v_{N+\ell}} 
			\int\limits_{\R^L_>} \de{\vec{\lambda}}
			\int\limits_{\R^{\ell N}}
			\De{C_{\ell \times N}}	
			\int\limits_{\R^M_>\times\left(\R_>^2\right)^{M}}\de{\vec{\Lambda}}
			F\left(Z\right)
			\Delta\left(Z\right)						\notag
			\\		
			&
			\prod_{j=1}^L\left|\lambda_j\right|^{j-N}\delta\left(\lambda_j^2+C_j^TX_jC_j-1\right)
			\prod\limits_{k=1}^M
			\left(\beta_k-\gamma_k\right)\left(\det^{-2} \Lambda_k\right)^{M-k}				\notag
			\\		
			&\prod_{k=1}^M\delta\left(\Lambda_k^T\Lambda_k+
			\begin{pmatrix}
				C_{L+2k-1}^T \\
				C_{L+2k}^T
			\end{pmatrix}
			Y_{k}
			\begin{pmatrix}
				C_{L+2k-1} \quad C_{L+2k}
			\end{pmatrix}
			-I_2
			\right),		\label{Integration:eq1}
		\end{align}
		where $C_i$ is the $i$'s column of the matrix $C_{\ell\times N}$
		and $X_j$, $Y_j$ are $\ell\times\ell$ real symmetric, positive definite
		matrices defined recursively  by
		\begin{equation*}
		\begin{cases}
		X_1 &= I_{\ell},\\
		X_{k+1} &= X_k+X_kP_kX_k, \quad k=\overline{1,L-1},
		\end{cases}
		\end{equation*}
	respectively
		\begin{equation*}
		\begin{cases}
		Y_1 &= I_{\ell}+\sum_{j=1}^{L}X_jP_jX_j,\\
		Y_{k+1} &= Y_k+Y_kQ_kY_k, \quad k=\overline{1,M-1}
		\end{cases}
		\end{equation*}	
		with $P_{j} = \frac{C_jC_j^T}{\lambda_j^2}$ and 
		$Q_j = 
		\begin{pmatrix}
		C_{L+2j-1} \quad C_{L+2j}
		\end{pmatrix}
		\Lambda_j^{-1}
		\Lambda_j^{-T}
		\begin{pmatrix}
		C_{L+2j-1} \quad C_{L+2j}
		\end{pmatrix}^T.
		$
		Moreover,
		\begin{equation*}
			\det X_1 = 1, \quad \det X_{k+1} = \lambda_k^{-2} \det X_k.
		\end{equation*}
		\begin{equation*}
			\det Y_{k+1} = \det^{-2} \Lambda_k\det Y_k.
		\end{equation*}	
	\end{proposition}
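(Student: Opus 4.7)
My plan is to integrate out the strictly upper block-triangular matrix $R$ in \eqref{e:F_average} block column by block column, from left to right. After the trivial integration of the flag variables $\De{_H O}$ over $O(N)$ produces the prefactor $v_N$, the whole argument rests on the observation that, since $S := Z + R$ is upper block-triangular, for $i<j$ one has
\begin{equation*}
(S^TS)_{ij} = Z_i^T R_{ij} + \sum_{k<i} R_{ki}^T R_{kj}.
\end{equation*}
Hence every off-diagonal delta function in $\delta\!\left(S^TS + C_{\ell\times N}^T C_{\ell\times N} - I_N\right)$ is linear in the corresponding $R_{ij}$ and can be eliminated in the order $(1,2),(1,3),(2,3),(1,4),\ldots,(j-1,j),\ldots$, each substitution contributing a Jacobian $|\det Z_i|^{-\mathrm{size}(j)}$.

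Collecting these Jacobians column by column produces exactly the two prefactors $\prod_{j=1}^L |\lambda_j|^{j-N}$ and $\prod_{k=1}^M (\det\Lambda_k)^{-2(M-k)}$ appearing in \eqref{Integration:eq1}, because the total number of scalar columns to the right of a real block $j$ equals $N-j$, while to the right of the $k$-th complex block it equals $2(M-k)$. A short induction on the block column index then shows that the iteratively solved off-diagonal blocks admit the closed form $R_{ij} = -Z_i^{-T} C_i^T X_i C_j$ for a real row block $i$ (with the obvious $2\times 2$ matrix analogue when $i$ is a complex block), where $X_i$ is the accumulator satisfying the recursion $X_{k+1} = X_k + X_k P_k X_k$ stated in the proposition. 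Substituting these solutions into the remaining diagonal delta functions turns the $j$-th diagonal constraint into $\lambda_j^2 + C_j^T X_j C_j = 1$ for a real block and, for the $k$-th complex block, into
\begin{equation*}
\Lambda_k^T\Lambda_k + \begin{pmatrix}C_{L+2k-1}^T\\C_{L+2k}^T\end{pmatrix} Y_k \begin{pmatrix}C_{L+2k-1} & C_{L+2k}\end{pmatrix} = I_2,
\end{equation*}
exactly as asserted.

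The determinant identities for $X_j$ and $Y_k$ are then a one-line consequence of the matrix determinant lemma: writing $X_{j+1} = X_j(I + P_j X_j)$ gives $\det X_{j+1} = \det X_j \cdot (1 + \lambda_j^{-2} C_j^T X_j C_j)$, and the just-derived diagonal constraint $\lambda_j^2 + C_j^T X_j C_j = 1$ turns the parenthesis into $\lambda_j^{-2}$. The corresponding identity for $Y_k$ follows in the same way using Sylvester's identity $\det(I+AB) = \det(I+BA)$ to convert the rank-two update $Y_{k+1} = Y_k(I + Q_k Y_k)$ into $\det^{-2}\Lambda_k$ by combination with the complex diagonal constraint. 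The main obstacle in executing this programme rigorously is the combinatorial bookkeeping of the transition from real to complex blocks: after processing the $L$-th column one has to re-initialize $Y_1 = I_\ell + \sum_{j=1}^L X_j P_j X_j$ and then rework the linear algebra with $2\times 2$ blocks of $R$ rather than scalars. Once the recursion is tracked correctly through this transition, everything else reduces to a routine but lengthy linear-algebra computation that faithfully reproduces \eqref{Integration:eq1}.
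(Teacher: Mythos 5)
Your proposal is correct and follows essentially the same route as the paper: eliminate the off-diagonal blocks of $R$ using the linearity of the constraints $(S^TS)_{ij}=Z_i^TR_{ij}+\sum_{k<i}R_{ki}^TR_{kj}+C_i^TC_j$, collect the Jacobians $|\det Z_i|^{-\mathrm{size}(j)}$ to get the prefactors $\prod_j|\lambda_j|^{j-N}$ and $\prod_k(\det^{-2}\Lambda_k)^{M-k}$, identify the closed form of the solved blocks via the accumulators $X_k$ (and $Y_k$ after the real-to-complex transition), and deduce the determinant recursions from the rank-one (resp.\ rank-two) update formula combined with the surviving diagonal delta constraints. This matches the paper's proof in all essential steps, including the use of $\det(I+P_kX_k)=1+\lambda_k^{-2}\langle C_k,X_kC_k\rangle=\lambda_k^{-2}$.
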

	\begin{proof}[Proof of Proposition~\ref{p:R_integration}]
		We start by rewriting the $\delta$-function in \eqref{e:F_average}
		element wise. The corresponding term can be written as a product of 
		lower dimensional, e.g. $1\times 1$,$1\times 2$, $2\times 2$, $\delta$-functions.
		For the top left corner we have $L$ $\delta$-functions of the form
		\begin{equation}\label{e:d_ii}
		\delta\left(\sum\limits_{j<k}R_{j,k}^2+\lambda_k^2+C_k^TC_k-1\right), \quad 1\leq k\leq L,
		\end{equation}
		and $\binom{L-1}{2}$ $\delta$-functions of the form
		\begin{equation}\label{e:d_ik}
		\delta\left(\sum\limits_{j=1}^{k-1}R_{j,k}R_{j,m} +\lambda_kR_{k,m}+C_k^TC_m\right), \quad 
		1\leq k < m\leq L.
		\end{equation}
		By solving equations corresponding to restrictions \eqref{e:d_ik} inductively one can see
		that $R_{k,m}$ can be expressed as $-\lambda_k^{-1}C_k^TX_{k}C_m$ for some matrix $X_{k}$. 
		Applying the ansatz one gets
		\begin{equation*}
			\sum\limits_{j=1}^{k-1} X_j^TP_jX_j - X_k + I_{\ell} = 0,
		\end{equation*}
		and the former is solved by the $X_j$'s defined above and therefore
		\begin{equation*}
			R_{k,m} = -\lambda^{-1}_k C_k^TX_kC_m.
		\end{equation*}
		The top right corner $\delta$-functions, corresponding to $1\times 2$ matrices, 
		can be written as
		\begin{equation*}
			\delta\left(			
			\sum\limits_{j=1}^{k-1}R_{j,k}R_{j,L+m}
			+\lambda_k R_{k,L+m}	
			+\left[C_k^TC_{L+2m-1} \, C_k^TC_{L+2m} \right]
			\right), \, 1\leq m \leq M.
		\end{equation*}
		These conditions can be resolved inductively in a similar way and one gets
		\begin{equation*}			
				R_{k,L+m} = -\frac{1}{\lambda_k}C_{k}^T X_k \left[C_{L+2m-1}
				\quad C_{L+2m}\right], \quad 1\leq k\leq L, 1\leq m \leq M.
		\end{equation*}
		Finally, the bottom right corner $\delta$-functions, corresponding to $2\times 2$ blocks,
		are given by
		\begin{multline*}
		\delta\left(\sum\limits_{j=1}^{L} R_{j,L+k}^T R_{j,L+m}
		+ \sum\limits_{j=1}^{k-1} R_{L+j,L+k}^T R_{L+j,L+m}
		+\Lambda_k^TR_{L+k,L+m} 
		\right.
		\\
		+\left.
		\begin{pmatrix}
		C_{L+2k-1}^TC_{L+2m-1} & C_{L+2k-1}^TC_{L+2m} \\
		C_{L+2k}^TC_{L+2m-1} & C_{L+2k}^TC_{L+2m}
		\end{pmatrix}
		\right), \quad 1\leq k<m \leq M.
		\end{multline*}
		with diagonal ones
		\begin{multline*}
		\delta\left(\sum\limits_{j=1}^{L} R_{j,L+m}^T R_{j,L+m}
		+\sum\limits_{j=1}^{m-1} R_{L+j,L+m}^T R_{L+j,L+m}
		+\Lambda_m^T\Lambda_m
		\right.
		\\
		\left.
		+
		\begin{pmatrix}
		C_{L+2m-1}^TC_{L+2m-1} & C_{L+2m-1}^TC_{L+2m} \\
		C_{L+2m}^TC_{L+2m-1} & C_{L+2m}^TC_{L+2m}
		\end{pmatrix}
		-I_2
		\right),
		\quad 1\leq m \leq M.
		\end{multline*}		
		The "non-diagonal equations" are solved by
		\begin{equation*}
			R_{L+k,L+m} = -\Lambda_k^{-T} 
			\begin{pmatrix}
			C_{L+2k-1}^T \\
			C_{L+2k}^T
			\end{pmatrix}
			Y_{k}
			\begin{pmatrix}
			C_{L+2m-1} \quad C_{L+2m}
			\end{pmatrix},
			\, 1\leq k<m \leq M.
		\end{equation*}
		where the $Y_k$ are defined above. Positiveness of the matrices $X_k,Y_k$ is
		obvious from their definitions. Finally, we integrate out
		the variables $R$ from \eqref{e:F_average} to obtain the first statement.
		We just mention that all $\delta$-functions constraints contained
		$R_{k,m}$ elements with a prefactor of the form either $\lambda_k$ or
		$\Lambda_k$ and this is the source of  Jacobian 
		\begin{equation*}
		\prod_{j=1}^L\left|\lambda_j\right|^{j-N}
		\prod\limits_{k=1}^M	\left(\det^{-2} \Lambda_k\right)^{M-k},
		\end{equation*}		
		appearing in the final result. For the determinants we note
		\begin{equation*}
		\det X_{k+1} = \det\left(X_k+X_kP_kX_k\right) = \det X_k\det\left(I+P_kX_k\right).
		\end{equation*}
		$P_k$ is up to a constant the projection onto $C_k$, and therefore the matrix $I+P_kX_k$
		can only have one eigenvalue different from $1$ with corresponding eigenvector $C_k$. Therefore,
		\begin{equation*}
		\det\left(I+P_kX_k\right) = 1+\lambda_k^{-2}\langle C_k, X_kC_k\rangle.
		\end{equation*}
		Finally, we note that there is a $\delta$-function of the form \eqref{e:d_ii}
		left in the integration with argument $\lambda_k^2-1+
		\langle C_k, X_kC_k\rangle$. Thus
		\begin{equation*}
		\det\left(I+P_kX_k\right) = 1+\frac{1-\lambda_k^2}{\lambda_k^2} = \lambda_k^{-2}.
		\end{equation*}
		For $Y_k$ the proof is analogous.
	\end{proof}	

	In the next step we integrate out the $C$ variables in \eqref{Integration:eq1}. We first change coordinates
	\begin{align*}
		&C_j\to X_j^{-1/2}C_j, \quad 1\leq j\leq L, \\
		&\left[C_{L+2j-1}\, C_{L+2j}\right]
		\to Y_j^{-1/2}\left[C_{L+2j-1}\, C_{L+2j}\right],\quad 1\leq j\leq M.
	\end{align*}
	It follows from Proposition~\ref{p:R_integration}, that all determinants 
	coming from Jacobians cancel and we obtain
	\begin{align*}
		\big\langle \widehat{F} \big\rangle_{\mathcal{X}_{L,M}}
		=\ & 2^M\frac{v_{\ell}v_N}{v_{N+\ell}} 
		\int\limits_{\R^L_>} 
		\de{\vec{\lambda}}
		\int\limits_{\R^M_>\times\left(\R_>^2\right)^{M}}
		\de{\vec{\Lambda}}\int\limits_{\R^{\ell N}}
		\De{C_{\ell \times N}}
		F\left(Z\right)
		\Delta\left(Z\right)
		\\
		&
		 \prod\limits_{k=1}^M
		\left(\beta_k-\gamma_k\right)\prod_{j=1}^L\delta\left(\lambda_j^2+C_j^TC_j-1\right)	
		\\	
		&
		\prod_{m=1}^M\delta\left(\Lambda_m^T\Lambda_m+
		\begin{pmatrix}
			C_{L+2m-1}^T \\
			C_{L+2m}^T
		\end{pmatrix}
		\begin{pmatrix}
			C_{L+2m-1} \quad C_{L+2m}
		\end{pmatrix}
		-I_2
		\right),
	\end{align*}
	Now we are ready to integrate out the variables $C$. 
	The first $L$ integrals are given by Proposition~\ref{p:delta_m}
	and using the corresponding result we integrate over the 
	variables $C_i$ for $i=\overline{1,L}$ 
	\begin{align*}
		\big\langle \widehat{F} \big\rangle_{\mathcal{X}_{L,M}}
		=\ & 2^M\frac{v_{\ell}v_N \pi^{L\ell/2}}{v_{N+\ell}\Gamma^L \left(\frac{\ell}{2}\right)} 
		\int\limits_{\R^L_>} \de{\vec{\lambda}}
		\int\limits_{\R^M_>\times\left(\R_>^2\right)^{M}}
		\de{\vec{\Lambda}}
		\int\limits_{\R^{\ell \left(N-L\right)}}
		\De{C_{\ell \times \left(N-L\right)}}	
		F\left(Z\right) \Delta\left(Z\right)		
		\\
		&
		\int\limits_{\R^{\ell \left(N-L\right)}}
		\De{C_{\ell \times \left(N-L\right)}}	
		\prod\limits_{j=1}^L\left(1-\lambda_j^2\right)^{\frac{\ell}{2}-1}_+		
		\prod_{k=1}^M \left(\beta_k-\gamma_k\right)	
		\\
		&			
		\prod_{m=1}^M\delta\left(\Lambda_m^T\Lambda_m+
		\begin{pmatrix}
			C_{L+2m-1}^T \\
			C_{L+2m}^T
		\end{pmatrix}
		\begin{pmatrix}
			C_{L+2m-1} \quad C_{L+2m}
		\end{pmatrix}
		-I_2
		\right),
	\end{align*}
	For every $2\times 2$ dimensional $\delta$-function we have $2\ell$ 
	variables of integration and $3$ $\delta$-functions. Therefore we need to distinguish 
	two different cases: $\ell=1$ (singular) and $\ell\geq 2$. In the first case
	we have
	\begin{equation*}
	I\left(\Lambda\right):=\int\de{x}\de{y}\delta\left(x^2-\Lambda_{11}\right)
	\delta\left(y^2-\Lambda_{22}\right)\delta\left(xy-\Lambda_{12}\right)
	\delta\left[\det\left(I_2-\Lambda^T\Lambda\right)\right],
	\end{equation*}
	where
	$
	\begin{pmatrix}
	\Lambda_{11} & \Lambda_{12} \\
	\Lambda_{12} & \Lambda_{22}
	\end{pmatrix} = I_2-\Lambda^T\Lambda.
	$ For $\ell\geq 2$ we have a non-singular integral of the form
	\begin{equation*}
	I\left(\Lambda\right):=\int\de{\vec{x}}\de{\vec{y}}
	\delta\left(\left\|\vec{x}\right\|^2-A\right)
	\delta\left(\left\|\vec{y}\right\|^2-C\right)
	\delta\left(\left\langle\vec{x},\vec{y}\right\rangle-B\right).
	\end{equation*}
	We change variables $\vec{y}$ to 
	$t = \left\|P_{\vec{x}}\vec{y}\right\| 
	= \frac{\left\langle \vec{x},\vec{y} \right\rangle}{\left\|\vec{x}\right\|^2}$
	and $\vec{r} = \left(r_1,r_2,\ldots,r_{\ell-1}\right)$ such that
	\begin{equation*}
		\vec{y} = t\vec{x}+\Big(r_1,\ldots,r_{\ell-1},-\frac{1}{x_{\ell}}\sum\limits_{j=1}^{\ell-1}x_jr_j\Big).
	\end{equation*}
	Then the corresponding Jacobian is given by $J = \frac{\left\|\vec{x}\right\|^2}{x_{\ell}}$. Applying this to 
	the above, yields
	\begin{align*}
		I\left(\Lambda\right)=&\int\de{\vec{x}}\de{\vec{r}}\de{t}
		\delta\big(\left\|\vec{x}\right\|^2-\Lambda_{11}\big)
		\delta\big(t\left\|\vec{x}\right\|^2-\Lambda_{12}\big) \frac{\left\|\vec{x}\right\|^2}{\left|x_{\ell}\right|}
		\delta\Big(t^2\left\|\vec{x}\right\|^2+\left\|\vec{r}\right\|^2
		+x_{\ell}^{-2}\Big(\sum\limits_{j=1}^{\ell-1}x_jr_j\Big)^2-\Lambda_{22}\Big)
		\\
		=& \int\de{\vec{x}}\de{\vec{r}}
		\delta\big(\left\|\vec{x}\right\|^2-\Lambda_{11}\big)
		\delta\left( \left\langle \vec{r}, V\vec{r} \right\rangle - U\right)
		\left|x_{\ell}\right|^{-1}.
	\end{align*}
	where $V = I_{\ell-1}+x_{\ell}^{-2}\left(x_1,x_2,\ldots,x_{\ell-1}\right)^T\left(x_1,x_2,\ldots,x_{\ell-1}\right)$
	is a positive definite matrix of size $\left(\ell-1\right)\times \left(\ell-1\right)$
	and $U = \Lambda_{22}-\frac{\Lambda_{12}^2}{\left\|\vec{x}\right\|^2}$. If $U<0$, then the integral with respect to $\vec{r}$ vanishes.
	Otherwise we get after changing variables $\vec{r} = V^{-1/2} \vec{s}$ and applying the result of 
	Proposition~\ref{p:delta_m}
	\begin{equation*}
	I\left(\Lambda\right) = 
	\frac{\pi^{\frac{\ell-1}{2}}}{\Gamma\left(\frac{\ell-1}{2}\right)}	
	\int\de{\vec{x}}
	\delta\left(\left\|\vec{x}\right\|^2-\Lambda_{11}\right)
	U_+^{\frac{\ell-3}{2}}
	\det^{-1/2} V \left|x_{\ell}\right|^{-1}.
	\end{equation*}
	It is easy to see that 
	\begin{equation*}
	\det V = 1+\Big(\sum\limits_{j=1}^{\ell-1}x_j^2\Big)\big/x_{\ell}^2
	= \frac{\left\|\vec{x}\right\|^2}{x_{\ell}^2}.
	\end{equation*}
	Then
	\begin{align*}
	I\left(\Lambda\right) &= 
	\frac{\pi^{\frac{\ell-1}{2}}}{\Gamma\left(\frac{\ell-1}{2}\right)}	
	\int\de{\vec{x}}
	\delta\left(\left\|\vec{x}\right\|^2-\Lambda_{11}\right)
	\left(\Lambda_{22} \left\|\vec{x}\right\|^2 - \Lambda_{12}^2\right)_+^{\frac{\ell-3}{2}}
	\left\|\vec{x}\right\|^{2-\ell}
	\\	
	&
	= \frac{2^{\ell-2}\pi^{\ell-1}}{\Gamma\left(\ell-1\right)}
	\det\left(I_2-\Lambda^T\Lambda\right)_+^{\frac{\ell-3}{2}},
	\end{align*}
	where we used one more time Proposition~\ref{p:delta_m}. Summarizing the above, we have shown for	 $\ell=1$
		\begin{align*}
			\big\langle \widehat{F} \big\rangle_{\mathcal{X}_{L,M}}
			=& 2^{M}\frac{v_1v_N }{v_{N+1}} 
			\int\limits_{\R^L_>} \de{\vec{\lambda}}			
			F\left(Z\right) \Delta\left(Z\right)
			\prod\limits_{j=1}^L\left(1-\lambda_j^2\right)^{-\frac{1}{2}}_+
			\\		
			&\int\limits_{\R^M_>\times\left(\R_>^2\right)^{M}}
			\de{\vec{\Lambda}}
			\prod\limits_{k=1}^M
			\left(\beta_k-\gamma_k\right)
			\delta\left(\det\left(I_2-\Lambda_k^T\Lambda_k\right)\right),
		\end{align*}
		while for $\ell\geq 2$
		\begin{align*}
			\big\langle \widehat{F} \big\rangle_{\mathcal{X}_{L,M}}
			=& \frac{2^{M\left(\ell-1\right)} \pi^{N\ell/2-M}  v_{\ell}v_N }{
				v_{N+\ell}\Gamma^L \left(\frac{\ell}{2}\right)\Gamma^M \left(\ell-1\right)} 
			\int\limits_{\R^L_>} \de{\vec{\lambda}}
			F\left(Z\right) \Delta\left(Z\right)
			\prod\limits_{j=1}^L\left(1-\lambda_j^2\right)^{\frac{\ell}{2}-1}_+
			\\
			&\int\limits_{\R^M_>\times\left(\R_>^2\right)^{M}}
			\de{\vec{\Lambda}}
			\prod\limits_{k=1}^M
			\left(\beta_k-\gamma_k\right)
			\det^{\frac{\ell-3}{2}}\left(I_2-\Lambda_k^T\Lambda_k\right)_+.
		\end{align*}
	Any real symmetric $2\times 2$ matrix is non-negative iff
	$	
	\Tr M \geq 0,$ and$
	\det M\geq 0.
	$
	Therefore, condition $\Lambda_k^T\Lambda_k\leq I_2$ is equivalent to
	\begin{equation*}
	\begin{cases}
	2\left(1-\alpha_k^2-\beta_k\gamma_k\right)- \left(\beta_k-\gamma_k\right)^2\geq 0 \\
	\left(1-\alpha_k^2-\beta_k\gamma_k\right)^2 - \left(\beta_k-\gamma_k\right)^2\geq 0.
	\end{cases}
	\end{equation*}
	It is easy to see that the second condition implies the first one and therefore we suppress
	the first one. 
	The last step is a change of variables from $\left(\alpha_k,\beta_k,\gamma_k\right)$
	to $z_k = x_k+iy_k$. Let
	\begin{equation*}
	\begin{cases}
	x_k &= \alpha_k, \\
	y_k &= \sqrt{\beta_k\gamma_k}, \\
	\delta_{k} &=\beta_k-\gamma_k.
	\end{cases}	
	\end{equation*}
	This change of variables is two to one and its 
	Jacobian is equal to
	\begin{equation*}
	J_k=\frac{2 y_k}{\sqrt{4y_k^2+\delta_k^2}}.
	\end{equation*}
	Finally, for $\ell=1$ integrating out the variables $\delta_k$ we obtain
	\begin{equation*}
		\big\langle \widehat{F} \big\rangle_{\mathcal{X}_{L,M}}
		= \frac{2^{M}v_N }{v_{N+1}} 
		\int
		\de{\vec{\lambda}}
		\de{\vec{x}}
		\de{\vec{y}}
		F\left(Z\right) \Delta\left(Z\right)
		\prod\limits_{j=1}^L\left(1-\lambda_j^2\right)^{-\frac{1}{2}}_+		
		\prod\limits_{k=1}^M \frac{2y_k}{\left|1-z_k^2\right|}.
	\end{equation*}	
	For $\ell\geq 2$ let $\Omega_k = \left\{\left(\alpha_k,\beta_k,\gamma_k\right):
	\beta_k\geq\gamma_k \wedge  \left(\beta_k-\gamma_k\right)^2\leq
	\left(1-\alpha_k^2-\beta_k\gamma_k\right)^2\right\}$, then
	\begin{align*}
	&\int\limits_{\Omega_k}
	\left(\beta_k-\gamma_k\right)
	\left(\left(1-\alpha_k^2-\beta_k\gamma_k\right)^2 - 
	\left(\beta_k-\gamma_k\right)^2\right)^{\frac{\ell-3}{2}}
	\de{\alpha_k}\de{\beta_k}\de{\gamma_k}
	\\
	&=
	\int\limits_{\R^M_>}\de{\vec{x}}
	\int\limits_{\R^M_+}\de{\vec{y}}
	\prod\limits_{k=1}^M\int\limits_{0}^{\left|1-x_k^2-y_k^2\right|}\de{\delta_k}
	\frac{4 y_k\delta_k}{\sqrt{4y_k^2+\delta_k^2}}
	\left(\left(1-x_k^2-y_k^2\right)^2 - 
	\delta_k^2\right)^{\frac{\ell-3}{2}}
	\\
	&=
	4y_k\int\limits_{-\infty}^{\infty}\de{x_k}
	\int\limits_{0}^{\infty}\de{y_k}
	\left|1-z_k^2\right|^{\ell-2}
	\int\limits_{\frac{2y_k}{\left|1-z_k^2\right|}}^1
	\left(1-u^2\right)^{\frac{\ell-3}{2}}\de{u},
	\end{align*}
	where we made the substitution $u = \sqrt{4y^2_k+\delta_k^2}\left|1-z_k^2\right|^{-1}$ 
	in the last integral. Introducing
	\begin{equation*}
	w^2_{\ell}\left(z\right) = 
	\begin{cases}
	\displaystyle\frac{1}{2\pi}\left|1-z^2\right|^{-1},&\ell=1,\\
	\displaystyle\frac{\ell\left(\ell-1\right)}{2\pi}
	\left|1-z^2\right|^{\ell-2}\int\limits_{
		\frac{2\left|\Im z\right|}{\left|1-z^2\right|}}^{1} \left(1-u^2\right)^{\frac{\ell-3}{2}}\de{u},
	&\ell\geq 2,
	\end{cases}
	\end{equation*}
	one can rewrite the answer as stated in Lemma~\ref{l:CJPD}. We like to stress 
	that for real $z$ the weight function can be written for any $\ell$ as
	\begin{equation*}
		w^2_{\ell}\left(x\right) = \frac{\Gamma\left(\ell+1\right)}{2^{\ell}\Gamma^2\left(\frac{\ell}{2}\right)}
		\left|1-x^2\right|^{\ell-2},
	\end{equation*}
	which follows from calculating the integral
	\begin{equation*}
		\int\limits_{0}^1 \left(1-u^2\right)^{\frac{\ell-3}{2}}\de{u} 
		= \frac{1}{2}B\left(\frac{\ell-1}{2},\frac{1}{2}\right) 
		= \frac{\pi2^{1-\ell}\Gamma\left(\ell-1\right)}
		{\Gamma^2\left(\frac{\ell}{2}\right)}.
	\end{equation*}
\end{proof}

\section{Proof of Lemma~\ref{l:inner_pr}}\label{pf:lemma2}\begin{proof}[Proof of Lemma~\ref{l:inner_pr}]
	First we note that the skew-product of polynomials with indexes 
	of the same parity is zero: The real part of the skew-product 
	changes sign after changing $x\to-x$ and $y\to-y$ in the integral and hence has to vanish.
	For the complex part we first note that $w_{\ell}\left(x+iy\right)w_{\ell}\left(x-iy\right)$ is an even 
	function of $y$, therefore only odd powers (because of the additional $\sgn (y)$ factor)
	contribute to the integral when expanding $\pi_{j}\left(x+iy\right)\pi_k\left(x-iy\right)$
	in powers of $x$ and $y$. However, these term will contain odd powers of $x$ at the same time
	and thus will vanish when integrated over $\left[-1,1\right]$. For indexes with different 
	parities we begin by calculating $\left(\varepsilon \left[w\pi_{i}\right]\right)\left(x\right)$
	for even and odd values of $i$ and real $x$. For complex argument there is no need to calculate anything,
	as the transformed polynomials are different from the original ones by a factor $i\Im\left(z\right)$ 
	and complex conjugation only. Let $\left|x\right|\leq 1$ and $\ell\geq 2$, then 
		\begin{align*}
		\left(\varepsilon \left[w_{\ell} z^{2k}\right]\right)\left(x\right)
		&= \frac{1}{2} \int\limits_{\R}t^{2k}w_{\ell}\left(t\right)\sgn\left(t-x\right)\de{t}
		= -\sgn\left(x\right)
		\left(\frac{\ell!}{2^{\ell}\Gamma^2\left(\frac{\ell}{2}\right)}\right)^{1/2}
		\int\limits_{0}^{\left|x\right|}
		t^{2k}\left(1-t^2\right)^{\frac{\ell}{2}-1}\de{t}\\
		&
		=-\frac{1}{2}\left(\frac{\ell!}{2^{\ell}\Gamma^2\left(\frac{\ell}{2}\right)}\right)^{1/2}
		\sgn\left(x\right)B\left(x^2;k+\frac{1}{2},\frac{\ell}{2}\right)
	\end{align*}
	and
	\begin{align*}	
		\left(\varepsilon \left[w_{\ell} z^{2k+1}\right]\right)\left(x\right)
		&= \frac{1}{2} \int\limits_{\R}t^{2k+1}w_{\ell}\left(t\right)\sgn\left(t-x\right)\de{t}
		=
		\left(\frac{\ell!}{2^{\ell}\Gamma^2\left(\frac{\ell}{2}\right)}\right)^{1/2}
		\int\limits_{\left|x\right|}^{1}t^{2k+1}\left(1-t^2\right)^{\frac{\ell}{2}-1}\de{t}
		\\
		&=\frac{1}{2}\left(\frac{\ell!}{2^{\ell}\Gamma^2\left(\frac{\ell}{2}\right)}\right)^{1/2}
		\left[B\left(k+1,\frac{\ell}{2}\right) - B\left(x^2;k+1,\frac{\ell}{2}\right)\right].
	\end{align*}
	Using definitions of polynomials $\pi_j\left(z\right)$ together with the identity
	\begin{equation*}
		B\left(t;p+1,q\right) - \frac{p}{p+q} B\left(t;p,q\right) = -\frac{t^p\left(1-t\right)^q}{p+q},
	\end{equation*}
		\begin{align}
		\left(\varepsilon\left[w_{\ell}\pi_{2k}\right]\right)\left(z\right) &=
		\begin{cases}
			\displaystyle-\frac{1}{2}\left(\frac{\ell!}{2^{\ell}\Gamma^2\left(\frac{\ell}{2}\right)}\right)^{1/2}
			\sgn\left(x\right)B\left(x^2;k+\frac{1}{2},\frac{\ell}{2}\right), & x\in \R, \\
			i\sgn\left(\Im z\right)\overline{z}^{2k}, & z\in\C\setminus\R.
		\end{cases}
		\label{e:eps_pi_even}
		\\
		\left(\varepsilon\left[w_{\ell}\pi_{2k+1}\right]\right)\left(z\right) &= 
		\begin{cases}
			\displaystyle\frac{1}{2k+\ell}\left(\frac{\ell!}{2^{\ell}\Gamma^2\left(\frac{\ell}{2}\right)}\right)^{1/2}
			x^{2k}\left(1-x^2\right)^{\frac{\ell}{2}}, & x\in \R, \\
			i\sgn\left(\Im z\right)\overline{z}^{2k+1}, & z\in\C\setminus\R.
		\end{cases}
		\label{e:eps_pi_odd}
	\end{align}
	Now we proceed with the calculation of the skew-product.
	\begin{equation*}
	\left( \pi_{2j},\pi_{2k+1}\right)^{\ell} = \left( \pi_{2j},\pi_{2k+1}\right)_{\R}^{\ell}
	+\left( \pi_{2j},\pi_{2k+1}\right)_{\C}^{\ell}.
	\end{equation*}
	For the real part we use \eqref{e:eps_pi_even} and \eqref{e:eps_pi_odd} to write
	\begin{align*}
	\left( \pi_{2j},\pi_{2k+1}\right)_{\R}^{\ell} =&
	\frac{\ell!}{2^{\ell}\Gamma^2\left(\frac{\ell}{2}\right)}
	\int\limits_{-1}^{1} x^{2j}\left(1-x^2\right)^{\frac{\ell}{2}-1}
	\frac{x^{2k}}{2k+\ell}\left(1-x^2\right)^{\frac{\ell}{2}}\de{x}
	\\
	+&
	\frac{\ell!}{2^{\ell}\Gamma^2\left(\frac{\ell}{2}\right)}
	\int\limits_{-1}^{1}\frac{1}{2}
	\left(x^{2k+1}-\frac{2k}{2k+\ell}x^{2k-1}\right)
	\left(1-x^2\right)^{\frac{\ell}{2}-1}
	\sgn\left(x\right)B\left(x^2;j+\frac{1}{2},\frac{\ell}{2}\right)\de{x}.
	\end{align*}
	The first integral is given by $\frac{B\left(j+k+\frac{1}{2},\ell\right)}{2k+\ell}$, while for the second
	one we use the important observation
	\begin{equation*}
	\frac{\de{}}{\de{x}} \left(x^{2k}\left(1-x^2\right)^{\frac{\ell}{2}}\right)
	= -\left(2k+\ell\right)
	\left(x^{2k+1}-\frac{2k}{2k+\ell}x^{2k-1}\right)
	\left(1-x^2\right)^{\frac{\ell}{2}-1}.
	\end{equation*}
	Applying integration by parts to the second integral, we obtain
	\begin{align*}
	\left( \pi_{2j},\pi_{2k+1}\right)_{\R}^{\ell} 
	&=
	\frac{\ell!}{2^{\ell}\Gamma^2\left(\frac{\ell}{2}\right)\left(2k+\ell\right)}	
	\bigg(B\left(j+k+\frac{1}{2},\ell\right)
	+
	2\int\limits_{0}^{1}x^{2k+2j}\left(1-x^2\right)^{\ell-1}\de{x}	
	\bigg) 
	\\
	&= \frac{\ell!}{2^{\ell-1}\Gamma^2\left(\frac{\ell}{2}\right)\left(2k+\ell\right)}	
	B\left(j+k+\frac{1}{2},\ell\right).
	\end{align*}
	For the complex part of the skew-product and $\ell\geq 2$ symmetry of the integrand yields
	\begin{equation*}
		\left( \pi_{2j},\pi_{2k+1}\right)_{\C}^{\ell} = 2i
		\frac{\ell\left(\ell-1\right)}{2\pi}
		\int\limits_{\mathbb{D}}z^{2j}
		\left(\overline{z}^{2k+1}-\frac{2k}{2k+\ell}\overline{z}^{2k-1}\right)
		\sgn\left(\Im z\right)\left|w_{\ell}^2\left(z\right)\right| \de{^2 z}.
	\end{equation*}
	One can check that
	\begin{align*}
		\frac{\partial }{\partial x} \left(x-iy\right)^{2k}
		\left(1-\left(x-iy\right)^2\right)^{\frac{\ell}{2}} &=
		i\frac{\partial }{\partial y} \left(x-iy\right)^{2k}
		\left(1-\left(x-iy\right)^2\right)^{\frac{\ell}{2}}
		\\
		&= -\left(2k+\ell\right)\pi_{2k+1}\left(x-iy\right)\left(1-\left(x-iy\right)^2\right)^{\frac{\ell}{2}-1}.
	\end{align*}
	Using this, integration by parts with respect to $x$ implies
	\begin{align}
		\left( \pi_{2j},\pi_{2k+1}\right)_{\C}^{\ell} =& 
		\frac{2i}{2\pi\left(2k+\ell\right)}
		\int\limits_{\mathbb{D}} \overline{z}^{2k}
		\left(1-\overline{z}^2\right)^{\frac{\ell}{2}} 
		\frac{\partial}{\partial x} \left(z^{2j}
		\left(1-z^2\right)^{\frac{\ell}{2}-1}\right)
		\sgn\left(\Im z\right) \widehat{w}_{\ell}\left(z\right) \de{^2 z}\notag
		\\
		& + \frac{2i}{2\pi\left(2k+\ell\right)}
		\int\limits_{\mathbb{D}}
		\overline{z}^{2k}
		\left(1-\overline{z}^2\right)^{\frac{\ell}{2}} 
		z^{2j}	\left(1-z^2\right)^{\frac{\ell}{2}-1}
		\sgn\left(\Im z\right)
		\frac{\partial}{\partial x} \widehat{w}_{\ell}\left(z\right) \de{^2 z},\label{e:skew_compl_x}
	\end{align}
	where 
	$$\widehat{w}_{\ell}\left(z\right) = \ell\left(\ell-1\right)
	\int\limits_{\frac{2\left|\Im z\right|}{\left|1-z^2\right|}}^1
	\left(1-u^2\right)^{\frac{\ell-3}{2}}\de{u}.$$
	Integration by parts with respect to $y$ gives
	\begin{align}
		\left( \pi_{2j},\pi_{2k+1}\right)_{\C}^{\ell} =& -
		\frac{2}{2\pi\left(2k+\ell\right)}
		\int\limits_{\mathbb{D}} \overline{z}^{2k}
		\left(1-\overline{z}^2\right)^{\frac{\ell}{2}} 
		\frac{\partial}{\partial y} \left(z^{2j}
		\left(1-z^2\right)^{\frac{\ell}{2}-1}\right)
		\sgn\left(\Im z\right)\widehat{w}_{\ell}\left(z\right) \de{^2 z}\notag
		\\
		& - \frac{2}{2\pi\left(2k+\ell\right)}
		\int\limits_{\mathbb{D}}
		\overline{z}^{2k}
		\left(1-\overline{z}^2\right)^{\frac{\ell}{2}} 
		z^{2j}	\left(1-z^2\right)^{\frac{\ell}{2}-1}		
		\frac{\partial}{\partial y} \left(
		\sgn\left(\Im z\right)
		\widehat{w}_{\ell}\left(z\right)\right) \de{^2 z}\notag
		\\
		&-4\frac{\ell\left(\ell-1\right)}{2\pi\left(2k+\ell\right)}
		\int\limits_{-1}^{1}x^{2j+2k}\left(1-x^2\right)^{\ell-1}
		\int\limits_{0}^{1}\left(1-u^2\right)^{\frac{\ell-3}{2}}\de{u}.\label{e:skew_compl_y}
	\end{align}
	We now add together \eqref{e:skew_compl_x} and \eqref{e:skew_compl_y} to obtain 
	twice the skew-product. When adding the first terms of the above expressions,
	we note that the integrand can be written as $F\left(z\right)
	\left(\frac{\partial}{\partial x}-i\frac{\partial}{\partial y}\right)P\left(z\right)$,
	for some function $F$ and a polynomial $P$. This obviously vanishes since 
	$\left(\frac{\partial}{\partial x}-i\frac{\partial}{\partial y}\right)\left(x+iy\right) = 0$.
	The third term in \eqref{e:skew_compl_y} can be calculated explicitly, while the second terms of
	\eqref{e:skew_compl_x} and \eqref{e:skew_compl_y} can be merged together to obtain
	\begin{align*}
		\left( \pi_{2j},\pi_{2k+1}\right)_{\C}^{\ell} =&
		\frac{1}{2\pi\left(2k+\ell\right)}
		\int\limits_{\mathbb{D}} \overline{z}^{2k}
		\left(1-\overline{z}^2\right)^{\frac{\ell}{2}} 
		z^{2j}	\left(1-z^2\right)^{\frac{\ell}{2}-1}
		\left(i\frac{\partial}{\partial x}-\frac{\partial}{\partial y}\right) \left(
		\sgn\left(\Im z\right)
		\widehat{w}_{\ell}\left(z\right)
		\right) \de{^2 z}
		\\
		&
		- \frac{\ell!}{2^{\ell-1}\Gamma^2\left(\frac{\ell}{2}\right)\left(2k+\ell\right)}	
		B\left(j+k+\frac{1}{2},\ell\right).
	\end{align*}
	The former term coincides with $\left( \pi_{2j},\pi_{2k+1}\right)_{\R}^{\ell}$. For the first one we 
	note
	\begin{equation*}
		\left(i\frac{\partial}{\partial x}-\frac{\partial}{\partial y}\right) \left(
		\sgn\left(\Im z\right)
		\widehat{w}_{\ell}\left(z\right)
		\right) = 2\ell\left(\ell-1\right)
		\frac{\left(1-\left|z\right|^2\right)^{\ell-2}}{\left|1-z^2\right|^{\ell}}
		\left(1-z^2\right).
	\end{equation*}
	This yields
	\begin{align*}
	\left( \pi_{2j},\pi_{2k+1}\right)_{\C}^{\ell} =& 
	-\left( \pi_{2j},\pi_{2k+1}\right)_{\R}^{\ell}
	+ \frac{2\ell\left(\ell-1\right)}{2\pi\left(2k+\ell\right)}
	\int\limits_{\mathbb{D}} \overline{z}^{2k}z^{2j}
	\left(1-\left|z\right|^2\right)^{\ell-2}\de{^2 z}\\
	 =&
	-\left( \pi_{2j},\pi_{2k+1}\right)_{\R}^{\ell} + \frac{1}{\binom{2k+\ell}{\ell}}\delta_{k,j}.
	\end{align*}
	The derivation for $\ell=1$ is even simpler and one should just put 
	$\widehat{w}_{\ell} \left(z\right) \equiv 1$ for all $z$.
\end{proof}

\section{Auxiliary results used in Section \ref{s:Det}}\label{pf:lemma333}
\subsection{Proof of Lemma~\ref{l:Largest_eig} and Lemma~\ref{l:secondtry23}}\label{pf:lemma3}\begin{proof}[Proof of Lemma~\ref{l:Largest_eig}]
	Assume by contradiction that there exists an $n\in\N$ such that $\|H_n\|=1$. 
	Hence, there exists a $\varphi\in \ell^2(\N_0)$ with $\|\varphi\|_2=1$ such that 
	$\langle\varphi, H \varphi\rangle=1$. 
	Since $\|H\|=1$ this implies that $1$ is a boundary point of the numerical range of $H$ 
	but this implies $1$ is a proper eigenvalue \cite{D:57}. 
	This contradicts purely absolutely continuous spectrum of the operator $H$ and the assertion follows. 
\end{proof}

\begin{proof}[Proof of Lemma~\ref{l:secondtry23}]
The operator inequality $H^m 1_{\leq \eps}(H)\leq \eps^{m-1}H$ implies
	\begin{equation*}
	\Tr\big( 1_n H^m 1_{\leq\varepsilon} (H)1_n\big) \leq \varepsilon^{m-1} \Tr\big( 1_n H 1_n\big).
	\end{equation*}
Hence, we obtain
	\begin{align*}
	\sum_{m\in\N} \frac{\Tr\big( 1_n H^m 1_{\leq\varepsilon} (H)1_n\big)}m  
	&\leq \sum_{m\in\N} \frac{\varepsilon^{m-1}} m \Tr\big( 1_n H 1_{\leq\varepsilon} (H)1_n\big)\notag\\
	&\leq
	\frac 1 \varepsilon \log( 1- \varepsilon) \Tr\big( 1_n H h_{\varepsilon}(H)\big),
	\end{align*}
where $h_\eps\in C([0,1])$ is a continuous function such that $1_{<\eps}\leq h_\eps\leq 1_{<2\eps}$. It follows from Lemma \ref{l:moments} that
	\begin{equation*}
	\limsup_{n\to\infty} \frac{\Tr\big( 1_n H^m1_n\big)}{\log n} = \frac{1}{\pi} \int\limits_{0}^{\infty}  \sech^m\left(u \pi\right)\text d u.
	\end{equation*}
From this and a Stone-Weierstra\ss \, argument we infer that
	\begin{equation*}
	\limsup_{n\to\infty} \frac{\Tr\big( 1_n H g(H) 1_n\big)}{\log n} = \frac{1}{\pi} \int\limits_{0}^{\infty}  g(\sech\left(u \pi\right))\sech\left(u \pi\right)\de{u}
	\end{equation*}
for all $g\in C([0,1])$. Therefore, we obtain that
	\begin{equation*}
	\limsup_{n\to\infty}\frac{\Tr\big( 1_n H h_{\varepsilon}(H)\big)}{\log n} = \frac{1}{\pi} \int\limits_{0}^{\infty}  h_\eps(\sech\left(u \pi\right))\sech\left(u 			\pi\right)\text d u
	\end{equation*}
and from dominated convergence the assertion
	\begin{equation*}
	\limsup_{\eps\to 0} \limsup_{n\to\infty}\frac{\Tr\big( 1_n H h_{\varepsilon}(H)\big)}{\log n} = 0.
	\end{equation*}
\end{proof}

\subsection{Proof of Lemma~\ref{l:Pl_asympt} and Lemma~\ref{l:secondtry2}}\label{pf:lemma4}
\begin{proof}[Proof of Lemma~\ref{l:Pl_asympt}]
For $x>0$ and $l\in\N_0$, we define
	\begin{equation*}
	F_l(x):= \Big( \frac 1 4\Big)_l \Big( \frac 1 2\Big)_l \Big( \frac 3 4\Big)_l\textsubscript 4 F_3 \Big(-l, l+\frac 1 2, i \frac x 2, -i 	\frac x 2; \frac 	1 4, \frac 1 2, 		\frac 3 4;1\Big)
	\end{equation*}
Then, it follows from \cite[Eq. (2.5)]{W:91} that we have the asymptotics
 	\begin{equation}\label{e:Pl_asympt}
	F_l(x) = (2 \pi)^{3/2} e^{-3l} l^{3l} \big( 2 | A(i  x/2 )|\cos\big(x\log l + \arg(A(i x/2)\big) 
	+ l^{-1}\widetilde R_l\left(x\right)\big)
	\end{equation}
as $l\to\infty$ with
	\begin{equation*}
	\sup_{x\in[0,M]}\sup_{l\in \N_0} \big|\widetilde R_l\left(x\right)\big|\leq\widetilde r\left(M\right)
	\end{equation*}
for some constant $\widetilde r(M)$ depending on $M$.
We recall
	$| A(i  x/2 )| = \displaystyle\frac  { \cosh(\pi x)^{1/2} }{(2\pi)^{3/2}}.$
On the other hand, the asymptotics 	
	\begin{equation}\label{e:asympt_Ster}
	 \frac{	 e^{-3l} l^{3l}  4^l }{l! \big(\frac 1 2\big)_{2l}} = \frac 1 {2 \sqrt \pi \sqrt l} + O\Big(\frac 1 {l^{3/2}}\Big)
	\end{equation}	
holds as $l\to\infty$. 
To see this, we use that $\big(\frac 1 2\big)_{2l} =\displaystyle \frac{\Gamma(\frac 1 2 + 2l)}{\Gamma(\frac 1 2)}$ and Stirling's formula. This impies
  	\begin{align}\label{e:Stirling}
	 \frac{	e^{-3l} l^{3l}  4^l }{l! \big(\frac 1 2\big)_{2l}} = \frac{ e^{-2l} l^{2l} 4^l  \sqrt \pi}{\sqrt{2\pi l}  \Gamma(2l +\frac 1 2)}\Big(\frac 1 {1+ O(1/l)}\Big).
	\end{align}
Now, the identity $\Gamma(2l +\frac 1 2)= \displaystyle\frac{(4l)! \sqrt \pi}{4^{2l} (2l)! }$ and Stirling's formula give
	\begin{align*}
	\eqref{e:Stirling} &= \frac{e^{-2l} l^{2l} 4^{3l} \sqrt{4 \pi l} (2l/ e)^{2l} }{\sqrt{2 \pi l} \sqrt{8 \pi l} (4l/e)^{4l}}\big( 1 + O(1/l)\big)\notag\\
	&= 
	\frac 1 {2 \sqrt \pi} \frac{n^{4l}}{\sqrt l l^{4l}} \notag\\
	& = \frac 1 {2 \sqrt \pi \sqrt l} \big( 1 + O(1/l)\big).
	\end{align*}
This proves \eqref{e:asympt_Ster}. Inserting  \eqref{e:Pl_asympt} in \eqref{e:defPn}, \eqref{e:asympt_Ster}, gives the assertion. 
\end{proof}

\begin{proof}[Proof of Lemma~\ref{l:secondtry2}]
Using \eqref{e:trace_eq} and Lemma \ref{l:Pl_asympt}, we obtain 
	\begin{align*}
	&\sum_{m\in\N} \frac 1 m  \Tr\big(1_n1_{>\eps}(H) H^m 1_n \big)\notag\\
	=& -\frac 2 \pi \int_0^{\frac{\sech^{-1}(\eps)}\pi}  \log\big( 1-  \sech(x\pi )\big)\notag\\
	&\qquad\qquad \times\Big( \sum_{l=0}^{n-1} \frac 1 l  \cos^2\big( x \log l + \arg(A(i x/2)\big) + l^{-3/2} \hat R_l(x)\Big)\de{x},
	\end{align*}
where  the error term satisfies $\sup_{l\in\N_0} |\hat R_l(x)|< r(\eps)$.
Since the latter is integrable and $l^{-3/2} \hat R_l(x)$ is summable in $l$, we obtain 
	\begin{align}
	&\limsup_{n\to\infty} \frac{\sum_{m\in\N} \frac 1 m  \Tr\big(1_n1_{>\eps}(H) H^m 1_n \big)}{\log n}\notag\\
	=& \limsup_{n\to\infty}-\frac 2 {\pi \log n}  
	\int_0^{\frac{\sech^{-1}(\eps)}\pi}  \log\big( 1-  \sech(x\pi )\big)\notag\\
	&\qquad\qquad\qquad\times 
	\Big( \sum_{l=0}^{n-1} \frac 1 l  \cos^2\big( x \log l + \arg(A(i x/2)\big)\Big)\de{x}.\label{e:lowerbund_eq2}
	\end{align}	
The identity
	\begin{align}
	 \cos^2\big( x \log n + \arg(A(i x/2)\big)
	&= \frac 1 2 
	+\frac{1}{2} \cos\left(2x\log l\right)\cos\left(2\arg A(ix/2)\right)\notag\\
	&\qquad-\frac{1}{2} \sin\left(2x\log l\right)\sin\left(2\arg A(ix/2)\right)\notag\\
	&=: \frac 1 2 + F(x,l)\label{e:lowerbound_eq3}
	\end{align}
and the asymptotics of the harmonic series yield
	\begin{align}
	\eqref{e:lowerbund_eq2} 
	\leq &-\frac 1 \pi \int_0^\infty  \log ( 1- \sech(x \pi ))\de{x} \notag\\
	& +\limsup_{n\to\infty} \frac {-2} {\pi \log n}  \int_0^{\frac{\sinh^{-1}(\eps)}{\pi}}   \log(1-\sech(x \pi ))
	 \sum_{l=1}^{n-1} \frac 1 l F(x,l)\de{x}.\label{e:lowerbound_eq1}
	\end{align}
We are left with estimating the second term. To do this, we split the latter integral further. 
For fixed $\delta>0$ let $0\leq g_\delta\in C_c^\infty\big([0,\frac{\sech^{-1}(\eps)}\pi]\big)$ and $g_\delta(x)=1$ for all $x\in \big(\delta,\sech^{-1}(\eps)-\delta\big)$.
Set 
	\begin{equation*}
	G(x):= \log ( 1- \sech(x \pi ))
	 \sum_{l=1}^{n-1} \frac 1 l F(x,l)
	 \end{equation*}
and we split the latter integral in the following way
	\begin{equation}\label{e:lowerbound_eq4}
		\frac 2 \pi \int_0^{\frac{\sinh^{-1}(\eps)}\pi}   G(x)\de{x}
		 =\frac 2 \pi \int_0^{\frac{\sinh^{-1}(\eps)}\pi}   g_\delta (x)  G(x) \de{x} 
		 + \frac 2 \pi \int_0^{\frac{\sinh^{-1}(\eps)}\pi}  (1- g_\delta(x)) G(x) \de{x} . 
	\end{equation}
The function $ g_\delta( \,\cdot\,) \log ( 1- \sech( \pi \,\cdot\, )) e^{2 i \arg(A(i (\cdot)/2))}$ is 
	smooth, compactly supported and exponentially decaying for any fixed $\eps,\delta>0$. 
	Hence, integration by parts implies that for any $k\in \N$
	\begin{equation}\label{e:lm_eq3}
	\int_0^{\frac{\sinh^{-1}(\eps)}\pi}  g_\delta(x) \log ( 1- \sech(x\pi))  e^{\pm 2i( x \log l + \arg(A(i x/2)))} \de{x}
	 = O(1/(\log l)^k)
	\end{equation}
as $l\to\infty$. 	
	Writing the $\sin$ and $\cos$ terms in $F(x,n)$, see \eqref{e:lowerbound_eq3}, in terms of exponentials and using the latter with $k=2$, we obtain that 
	\begin{equation}\label{e:lowerbound_eq5}
	 \frac 2 \pi \int_0^{\frac{\sinh^{-1}(\eps)}\pi}   g_\delta (x)  G(x) \de{x}= O\Big(\sum_{l=0}^{n-1} \frac 1 {l (\log l)^2} \Big) = O(1),
	\end{equation}
	as $l\to\infty$.
	For the second integral in  \eqref{e:lowerbound_eq4}, we note that 
	\begin{equation*}
	\sup_{x\in (0,\infty)}\sup_{n\in \N_0} |F(n,x)| \leq 1.
	\end{equation*}
	 Therefore, using the asymptotics of the harmonic 		series, we obtain as $n\to\infty$ 
	\begin{equation}\label{e:lowerbound_eq6}
		\Big| \frac 2 \pi \int_0^{\frac{\sinh^{-1}(\eps)}\pi}   \big(1- g_\delta(x)\big) G(x) \de{x} \Big|\leq 
		 \log n \int_0^\infty \big|\big(1- g_\delta(x)\big) \log ( 1- \sech(x\pi))\big| \de{x}  + O(1).
	\end{equation}
We obtain from \eqref{e:lowerbound_eq1}, \eqref{e:lowerbound_eq4},  \eqref{e:lowerbound_eq5} and \eqref{e:lowerbound_eq6} that for all $\delta>0$
 	\begin{align*}
		\limsup_{n\to\infty} \frac{\sum_{m\in\N} \frac 1 m  \tr\big(1_n H^m 1_{>\eps}(H) 1_n \big)}{\log n}
		&\leq 
		-\frac 1 \pi \int_0^\infty \log ( 1- \sech(x \pi ))d x\notag\\
		&\qquad\qquad+ \int_0^{\frac{\sech^{-1}(\eps)}\pi}\big|\big(1- g_\delta(x)\big) \log ( 1- \sech(x\pi))\big|\de{x}.
	\end{align*}   		
Taking the limit $\delta\to 0$, the last term in the latter vanishes by dominated convergence using that $\log ( 1- \sech(\,\cdot \,\pi))$ is integrable. This gives the assertion.	
	\end{proof}

\section{Open problems and conjectures}\label{s:OpenProblems}
In this paper we derived an explicit expression \eqref{e:Pers_det} for the "persistence" probability of truncations  of random orthogonal matrices of size $\ell$. In the case $\ell=1$ we were also able to perform an asymptotic analysis of this probability in \eqref{asymptProbab}. It is natural to ask what happens for $\ell>1$ or even for $\ell$ growing with $n$. These questions have their own applications to the distribution of roots of random, matrix valued, polynomials with coefficients
given by Real Ginibre matrices of size $\ell\times \ell$, see \cite{Fo:10} for details.
Looking at \eqref{e:Pers_det}, one has to analyse the
 determinant of identity minus a weighted Hankel matrix in the case of $\ell>1$. We claim that the methods described in this paper are also
applicable to this case and a similar analysis using the results of \cite{SS:19}, gives
\begin{conjecture}\label{c:large_l}
	Let $\ell$ be a fixed integer number and the ensemble of random matrices $M_{2n}$ be defined as 
	in Theorem~\ref{t:Main_Det}. Then the corresponding persistence probability decays as
	\begin{equation}\label{e:large_l}
	\lim\limits_{n\to\infty}\frac{\log p_{2n}^{\left(\ell\right)}}{\log n} = -2\theta\left(\ell\right),
	\quad\mbox{ with }\quad
	\theta\left(\ell\right) = -\frac{1}{2\pi} \int\limits_{0}^{\infty} 
	\log\Bigg(1-	\Bigg|\frac{\Gamma\left(\frac{\ell}{2}+ix\right)}{
	\Gamma\left(\frac{\ell}{2}\right)}\Bigg|^2\Bigg)\de{x}.
	\end{equation}
\end{conjecture}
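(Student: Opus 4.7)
The plan is to mimic the proof of Theorem~\ref{t:det_main} step by step, with the infinite Hilbert matrix $H$ replaced by the infinite weighted Hankel matrix $K^{(\ell)}:=\mathcal{D}^{(\ell)}H^{(\ell)}\mathcal{D}^{(\ell)}$ on $\ell^2(\N_0)$, whose entries are
\begin{equation*}
K^{(\ell)}_{p,q}=\frac{\Gamma(\ell)}{2^{\ell-1}\Gamma^2(\ell/2)}\sqrt{\frac{\Gamma(2p+\ell)\Gamma(2q+\ell)}{\Gamma(2p+1)\Gamma(2q+1)}}\,\frac{\Gamma(p+q+1/2)}{\Gamma(p+q+\ell+1/2)}.
\end{equation*}
First I would verify that $\|K_n^{(\ell)}\|<1$ for every finite $n$. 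The natural route is to exhibit $K^{(\ell)}$ as a bounded self-adjoint operator with purely absolutely continuous spectrum equal to $[0,1]$, and then invoke the numerical-range argument of Lemma~\ref{l:Largest_eig}. The reference \cite{SS:19} should supply exactly such a diagonalization: one expects a unitary $U:\ell^2(\N_0)\to L^2((0,\infty),\de{\rho_\ell})$ under which $UK^{(\ell)}U^*$ becomes multiplication by the symbol $m_\ell(x):=|\Gamma(\ell/2+ix)/\Gamma(\ell/2)|^2$, whose range is $(0,1]$ and which specializes to $\sech(\pi x)$ when $\ell=1$ via the reflection formula. The basis realizing $U$ should be a one-parameter family of continuous Hahn / Meixner--Pollaczek type polynomials generalizing the $\hat P_l$ of \eqref{e:defPn}.

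Given the norm bound, the trace-log expansion $\log p_{2n}^{(\ell)}=-\sum_{m\geq 1}\tr\bigl((K_n^{(\ell)})^m\bigr)/m$ is valid. The upper bound then reduces to the moment asymptotic
\begin{equation*}
\lim_{n\to\infty}\frac{\tr\bigl((K_n^{(\ell)})^m\bigr)}{\log n}=\frac{1}{\pi}\int_0^\infty m_\ell(x)^m\,\de{x}=:\mu_m^{(\ell)},
\end{equation*}
which is the direct generalization of Lemma~\ref{l:moments}. I expect this to follow from Widom's method \cite{Wi:66} adapted to the symbol $m_\ell$, or directly from the machinery in \cite{SS:19}; the essential input is a Szeg\H{o}-type statement for Hankel operators whose symbol has a single logarithmic pinch at the spectral edge. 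Truncating $\sum_m$ at a finite level, letting $n\to\infty$ and then recombining, yields $\limsup\log p_{2n}^{(\ell)}/\log n\leq -2\theta(\ell)$.

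For the lower bound I would transcribe Section~\ref{s:Det} verbatim: use positivity of the entries to estimate $\tr\bigl((K_n^{(\ell)})^m\bigr)\leq\tr\bigl(1_n(K^{(\ell)})^m 1_n\bigr)$, split via the spectral projection $1_{>\varepsilon}(K^{(\ell)})$, and dispose of the $1_{\leq\varepsilon}$ piece with the operator bound $(K^{(\ell)})^m 1_{\leq\varepsilon}\leq\varepsilon^{m-1}K^{(\ell)}$ plus Lemma~\ref{l:secondtry23}'s dominated-convergence argument. The $1_{>\varepsilon}$ piece is evaluated in the spectral representation and becomes an integral of $\log(1-m_\ell(x))$ against the kernel $\sum_{l=0}^{n-1}|(Ue_l)(x)|^2$. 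What is needed here is the analogue of Lemma~\ref{l:Pl_asympt}: a large-$l$ asymptotic
\begin{equation*}
|(Ue_l)(x)|^2=\frac{1}{\pi\, l\,m_\ell(x)}\Bigl(1+\text{oscillation in }\log l+O(l^{-1/2})\Bigr),
\end{equation*}
uniform on compact sets in $x$, so that Ces\`aro averaging produces the leading $\frac{1}{\pi}\log n$ and the oscillatory term integrates away on any interval $[\delta,\pi^{-1}\operatorname{arcosh}(\varepsilon^{-1})-\delta]$ by the same integration-by-parts argument that gave \eqref{e:lm_eq3}.

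The main obstacle is the uniform large-degree asymptotics of the polynomials diagonalizing $K^{(\ell)}$. For $\ell=1$ the proof leaned on a clean ${}_4F_3$ evaluation and the explicit Watson-type expansion of \cite{W:91}. For general $\ell$ the relevant family of Meixner--Pollaczek-type polynomials has asymptotics in principle, but extracting a remainder estimate uniform in $x\in[0,M]$, together with the cancellations needed to justify interchanging $\lim_n$, $\sum_l$ and $\int\de{x}$, is the delicate point. Once this Plancherel-plus-oscillation package is in place for the symbol $m_\ell$, everything else in Section~\ref{s:Det} carries over line by line, and the $\log$-derivative of the determinant converges to $-2\theta(\ell)$ with $\theta(\ell)$ as in \eqref{e:large_l}.
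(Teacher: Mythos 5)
The statement you are proving is presented in the paper only as Conjecture~\ref{c:large_l}: the authors give no proof, merely assert that ``the methods described in this paper are also applicable'' together with the results of \cite{SS:19}, and defer the rigorous argument to future work. Your roadmap is precisely the one they have in mind — replace $H$ by the weighted Hankel operator $K^{(\ell)}=\mathcal{D}^{(\ell)}H^{(\ell)}\mathcal{D}^{(\ell)}$ (your entry formula is correct, and $m_1(x)=|\Gamma(1/2+ix)|^2/\pi=\sech(\pi x)$ checks out against \eqref{decayexp}), then rerun the trace--log expansion, the upper bound via moment asymptotics, and the lower bound via the spectral decomposition and polynomial asymptotics. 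So as a description of the intended strategy it is accurate.

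It is not, however, a proof, because each of the three pillars of Section~\ref{s:Det} is assumed rather than established for $\ell>1$. (i) The diagonalization of $K^{(\ell)}$ as multiplication by $m_\ell$, with purely absolutely continuous spectrum $[0,1]$ — needed both for the analogue of Lemma~\ref{l:Largest_eig} and for \eqref{e:trace_eq} — is only ``expected'' from \cite{SS:19}; you must verify that the operator treated there really is this $K^{(\ell)}$, including the normalization of the spectral measure (your heuristic $|(Ue_l)(x)|^2\sim \frac{1}{\pi l\, m_\ell(x)}(1+\cdots)$ is off by a factor of $2$ relative to the $\ell=1$ bookkeeping in Lemma~\ref{l:Pl_asympt} and \eqref{e:trace_eq}, which is exactly the kind of constant that silently corrupts $\theta(\ell)$). (ii) The moment asymptotics $\Tr((K_n^{(\ell)})^m)\sim \mu_m^{(\ell)}\log n$ does not follow from \cite[Thm.~4.3]{Wi:66} as cited: for $\ell>1$ the matrix is a \emph{weighted} Hankel matrix, not a Hankel matrix $\{h_{p+q}\}$, so Widom's theorem does not apply verbatim and an actual argument (or a precise citation covering weighted Hankel operators) is required. (iii) The uniform-in-$x$ large-degree asymptotics of the diagonalizing polynomials, with a remainder good enough to justify the interchange of $\lim_n$, $\sum_l$ and $\int\de{x}$ and the stationary-phase cancellation \eqref{e:lm_eq3}, is the analogue of \cite{W:91} and you explicitly leave it open. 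Since the conclusion $\theta(\ell)$ is quantitatively sensitive to all three inputs, the proposal should be regarded as a correct plan with the decisive steps missing — consistent with the fact that the authors themselves state the result only as a conjecture.
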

A rigorous proof of the above will be the content of a future work. 
So far we were not successful in finding a closed form  of
the integral \eqref{e:large_l}. However, we could rewrite the above in terms 
of a random walk. More precisely, we obtain 
\begin{lemma}
	Let $\left\{\xi_j\right\}_{j=1}^{\infty}$ be a family of i.i.d. random variables 
	having probability density function
	\begin{equation*}
		\rho_{\ell}\left(x\right) = 
		\frac{1}{2B\left(\frac{\ell}{2},\frac{1}{2}\right)}
		\sech^{\ell}\left(\frac{x}{2}\right),
	\end{equation*}
	and $S_k=\sum\limits_{j=1}^k \xi_j$ be a random walk with 
	corresponding steps. Let $\tau$ be the first hitting time of the origin, then
	\begin{equation*}
		\theta({\ell}) = \frac{1}{4}\P{S_{\tau} \in \de{0}},
	\end{equation*}
	where by $\P{\zeta \in \de{0}}$ we mean the probability density function 
	of the random variable $\zeta$ evaluated at the origin.
\end{lemma}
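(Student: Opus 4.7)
The plan is to translate $\theta(\ell)$ into a probabilistic quantity via Fourier analysis, and then invoke a Spitzer--Baxter--Kemperman type first-passage identity for the symmetric continuous random walk $S_k=\xi_1+\cdots+\xi_k$.

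First, I identify $\hat\rho_\ell(t):=\E{e^{it\xi}}$ with the function in the integrand of \eqref{e:large_l}. A direct Mellin/beta-integral computation (the substitution $u=e^{-x/2}$ and Euler's beta integral, together with the reflection formula $\Gamma(z)\Gamma(1-z)=\pi/\sin(\pi z)$) gives
\begin{equation*}
	\hat\rho_\ell(t) = \left|\frac{\Gamma(\ell/2+it)}{\Gamma(\ell/2)}\right|^2,
\end{equation*}
so the integrand of \eqref{e:large_l} is precisely $1-\hat\rho_\ell(t)$. Evenness and continuity of $\rho_\ell$ make $\hat\rho_\ell$ real, non-negative, even, and strictly less than $1$ for $t\neq 0$, with exponential decay $\hat\rho_\ell(t)\sim C_\ell|t|^{\ell-1}e^{-\pi|t|}$ at infinity (Stirling).

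Second, I expand the logarithm via Mercator's series and apply Fourier inversion. Since $\hat\rho_\ell(t)\in[0,1)$ for $t\neq 0$ and each term is non-negative, monotone convergence justifies
\begin{equation*}
	\theta(\ell) = \frac{1}{4\pi}\sum_{m\geq 1}\frac{1}{m}\int_{-\infty}^\infty \hat\rho_\ell(t)^m\,\de t = \frac{1}{2}\sum_{m\geq 1}\frac{f_{S_m}(0)}{m},
\end{equation*}
using that $\hat\rho_\ell(t)^m$ is the characteristic function of $S_m$ and the Fourier inversion $2\pi f_{S_m}(0)=\int_\R \hat\rho_\ell^m\,\de t$, where $f_{S_m}$ is the probability density of $S_m$ (which exists and is continuous at $0$ for every $m\geq 1$).

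Third, I identify this series with the first-passage density. With $\tau=\min\{k\geq 1:S_k\leq 0\}$ (setting $S_0=0$), the Spitzer--Baxter--Rogozin/Wiener--Hopf factorisation gives, for $|z|<1$,
\begin{equation*}
	1-\E{z^\tau e^{iuS_\tau}} = \exp\bigg(-\sum_{n\geq 1}\frac{z^n}{n}\E{e^{iuS_n}\mathbbm{1}_{\{S_n\leq 0\}}}\bigg).
\end{equation*}
Symmetry of $\rho_\ell$ together with $\P{S_n=0}=0$ yields $2\,\Re\E{e^{iuS_n}\mathbbm{1}_{\{S_n<0\}}}=\hat\rho_\ell(u)^n$, so the symmetric part of the exponent equals $\tfrac12\log(1-z\hat\rho_\ell(u))$, while the antisymmetric part is odd in $u$. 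Fourier-inverting the resulting transform at the boundary point $x=0$, taking $z\to 1^-$ with an appropriate regularisation, and using that the odd-in-$u$ correction integrates to zero against the Fourier kernel at $x=0$, gives the Kemperman-type identity
\begin{equation*}
	\P{S_\tau\in \de 0} = f_{S_\tau}(0) = 2\sum_{m\geq 1}\frac{f_{S_m}(0)}{m},
\end{equation*}
which combined with the previous display yields the claim $\theta(\ell)=\tfrac14\,\P{S_\tau\in \de 0}$.

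The main obstacle lies in the justification of the third step. The symmetric part of the Wiener--Hopf exponent diverges logarithmically at $u=0$ as $z\to 1^-$, so one must subtract this divergence before passing to the limit and then carefully control the antisymmetric ``phase'' to ensure it drops out upon Fourier inversion at $x=0$. Tightness and the interchange of limits rely on the behaviour $1-\hat\rho_\ell(u)\sim c_\ell u^2$ near $u=0$ (finite variance of $\xi$) and the exponential decay of $\hat\rho_\ell$ at infinity. An alternative route bypassing the regularisation altogether is a direct cycle-lemma/time-reversal argument on paths that strictly avoid the origin until the terminal step $k=\tau$, in the spirit of Kemperman's classical formula for lattice walks.
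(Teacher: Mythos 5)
First, a point of reference: the paper does not prove this lemma --- it is stated in Section~\ref{s:OpenProblems} with the remark that the proof is deferred to future work --- so there is no in-paper argument to compare yours against, and I can only assess your proposal on its own terms. Your first two steps are correct and are surely how any proof must begin: the characteristic function of $\rho_\ell$ is indeed $\hat\rho_\ell(t)=\left|\Gamma(\ell/2+it)/\Gamma(\ell/2)\right|^2$ (the normalisation is consistent via the duplication formula), and Mercator's series together with Fourier inversion give
\begin{equation*}
\theta(\ell)=\frac12\sum_{m\ge1}\frac{f_{S_m}(0)}{m},
\end{equation*}
where $f_{S_m}$ denotes the density of $S_m$. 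This part is fine.

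The gap is in your third step, and it is not merely a matter of regularisation: the identity you aim at carries a spurious factor of $2$. For $\tau=\min\{k\ge1:S_k<0\}$ the correct first-passage identity is $f_{S_\tau}(0^-)=\sum_{m\ge1}m^{-1}f_{S_m}(0)$, \emph{without} the factor $2$. This follows cleanly from Baxter--Spitzer along the real Laplace axis: from $1-\E{e^{sS_\tau}}=\exp{-\sum_{m\ge1}m^{-1}\E{e^{sS_m};\,S_m<0}}$ one gets, as $s\to+\infty$, that both sides equal $1-s^{-1}\sum_m m^{-1}f_{S_m}(0)+o(s^{-1})$ by Watson's lemma, while $s\,\E{e^{sS_\tau}}\to f_{S_\tau}(0^-)$; no principal values or $z\to1^-$ limits are needed, so this route also disposes of the obstacle you flag. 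You can confirm the absence of the factor $2$ on the two-sided exponential walk $\rho(x)=\frac12e^{-|x|}$: there the overshoot $-S_\tau$ is exactly $\mathrm{Exp}(1)$ by memorylessness, so $f_{S_\tau}(0^-)=1$, and indeed $-\frac{1}{2\pi}\int_{\R}\log\bigl(1-(1+u^2)^{-1}\bigr)\de{u}=1$. Consequently your chain of identities yields $\theta(\ell)=\frac12 f_{S_\tau}(0^-)$ for this $\tau$, not the stated $\frac14\P{S_\tau\in\de{0}}$; the factor $2$ you insert into the ``Kemperman-type identity'' is exactly what is needed to mask the mismatch, and your own sketch cannot produce it: Fourier inversion at the boundary point $x=0$ of the support of $S_\tau$ returns $\frac12 f_{S_\tau}(0^-)$, and only the imaginary part of the Wiener--Hopf factor $e^{i\psi(u)}$ is killed by oddness --- the factor $\cos\psi(u)$ survives in the real, even part of the integrand. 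To close the argument you must either (i) pin down precisely what the authors mean by ``first hitting time of the origin'' and by $\P{S_\tau\in\de{0}}$ (a two-sided or symmetrised crossing quantity equal to \emph{twice} the one-sided ladder-height density at the origin would reconcile the constants), or (ii) conclude that with the natural reading of $\tau$ the constant in the lemma should be $\frac12$ rather than $\frac14$. As written, the third step does not establish the statement.
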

This lemma will also be proved in a future work. 
As discussed in the introduction, the persistence probability of rank-one truncations of 
random orthogonal matrices has evident connections to the persistence probability 
of $\sech$ correlated Gaussian Stationary Processes (GSP). The analysis of the corresponding GSP
led us in \cite{PS:19} to the study of the related persistence problem for the latter random walk
with the parameter $\ell$ set to one. One may expect that for general $\ell>1$ there
should be a connection of \eqref{e:large_l} to GSP with $\sech^\ell\big(\frac{x}{2}\big)$ correlated process.
However, an accurate comparison of our numerical results with the one found in \cite{NL:01} shows some mismatch. 

Another intriguing and challenging question is to study the asymptotics of the persistence problem
for our ensemble of random matrices when the parameter $\ell$ is growing in $n$. Here one 
would expect some phase transition from the weak non-orthogonality universality class, corresponding to 
$\ell/n = o\left(1\right)$, to the Real Ginibre universality class when $\ell/n \to \infty$ 
(compare to Proposition~\ref{p:all_real}). 
In full generality the problem is yet to be solved, but some partial results can be already obtained
given the above conjecture.
\begin{conjecture}\label{p:all_real_large_l}
	For large integers $\ell$ the decay exponent $\theta(\ell)$ behaves as
	\begin{equation}\label{e:theta_large_l}
		\theta \left(\ell\right) = \frac{1}{4}\sqrt{\frac{\ell}{2\pi}}\zeta\left(3/2\right)
		\left(1+o\left(1\right)\right),
		\quad \ell\to\infty.
	\end{equation}
\end{conjecture}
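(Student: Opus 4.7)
The plan is to rescale $x$ so that the integrand becomes a $\ell$-independent function plus lower order corrections, then apply dominated convergence. Concretely, substitute $x = u\sqrt{\ell/2}$ in \eqref{e:large_l} to obtain
\begin{equation*}
\theta(\ell) = -\frac{1}{2\pi}\sqrt{\frac{\ell}{2}}\int_0^\infty \log\!\left(1-\left|\frac{\Gamma\big(\tfrac{\ell}{2}+iu\sqrt{\ell/2}\,\big)}{\Gamma\big(\tfrac{\ell}{2}\big)}\right|^2\right)\de{u}.
\end{equation*}
The prefactor $\sqrt{\ell/(2\pi)}/4$ is already pulled out of $\theta(\ell)$ as predicted by \eqref{e:theta_large_l}, so it suffices to show that the rescaled integral converges to $-2\sqrt{\pi}\,\zeta(3/2)/2 = -\sqrt{\pi}\,\zeta(3/2)$, or equivalently that
\begin{equation*}
\lim_{\ell\to\infty}\int_0^\infty \log\!\left(1-\left|\frac{\Gamma\big(\tfrac{\ell}{2}+iu\sqrt{\ell/2}\,\big)}{\Gamma\big(\tfrac{\ell}{2}\big)}\right|^2\right)\de{u}
= \int_0^\infty \log\!\left(1-e^{-u^2}\right)\de{u}.
\end{equation*}

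Next I would identify the pointwise limit of the integrand. Using the Taylor expansion of $\log\Gamma$ around $a=\ell/2$ in the imaginary direction,
\begin{equation*}
\log\Gamma(a+it)+\log\Gamma(a-it)-2\log\Gamma(a) = -\psi'(a)\,t^2 + O\!\big(\psi'''(a)\,t^4\big),
\end{equation*}
and the asymptotics $\psi'(a) = 1/a + O(1/a^2)$, $\psi'''(a) = O(1/a^3)$. With $t = u\sqrt{\ell/2}$ and $a=\ell/2$ one gets $\psi'(a)t^2 = u^2(1+O(1/\ell))$ while the error term becomes $O(u^4/\ell)$. Consequently the ratio inside the logarithm converges pointwise to $e^{-u^2}$ for each fixed $u>0$.

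The third step is the evaluation of the limiting integral. Expanding $\log(1-e^{-u^2}) = -\sum_{k\geq 1} e^{-ku^2}/k$ and integrating term-by-term using Gaussian integrals,
\begin{equation*}
\int_0^\infty \log\!\left(1-e^{-u^2}\right)\de{u} = -\sum_{k=1}^\infty \frac{1}{k}\cdot\frac{1}{2}\sqrt{\frac{\pi}{k}} = -\frac{\sqrt\pi}{2}\,\zeta(3/2).
\end{equation*}
Combining this with the prefactor $-\sqrt{\ell/2}/(2\pi)$ reproduces exactly \eqref{e:theta_large_l}.

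The main obstacle is the rigorous interchange of limit and integral. One needs a uniform (in $\ell$) integrable majorant for $\log(1-|\Gamma(\ell/2+iu\sqrt{\ell/2})/\Gamma(\ell/2)|^2)$. The small-$u$ regime is harmless since the ratio is close to $1-u^2+O(u^4)$, making the logarithm integrable like $\log u^2$. The dangerous regime is moderate-to-large $u$, where the Gaussian Taylor expansion breaks down. Here I would instead invoke the global estimate
\begin{equation*}
\left|\frac{\Gamma(a+it)}{\Gamma(a)}\right|^2 = \prod_{k\geq 0}\frac{(a+k)^2}{(a+k)^2+t^2}\leq \frac{1}{1+t^2/a},
\end{equation*}
(which follows from the Weierstrass product for $\Gamma$), yielding $1-|\ldots|^2 \geq u^2/(1+u^2)$ in the rescaled variable. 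This gives the lower bound $\log(u^2/(1+u^2))$ on the integrand, which is integrable on $(0,\infty)$ and provides the dominating function needed. Finally, the tails $u\gg 1$ are controlled by using Stirling at fixed $\Re = \ell/2$ and large imaginary part, which yields exponential decay $\lesssim e^{-\pi u\sqrt{\ell/2}}$ in the ratio, so these contribute negligibly. With a dominated convergence argument tied together, the stated asymptotic follows.
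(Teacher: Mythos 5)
The paper does not prove this statement: it is stated as a conjecture, and the authors only remark that it ``can be either confirmed by an asymptotic analysis of the Gamma-function or by approximating the random walk described above by a random walk with Gaussian $N(0,4/\ell)$ distributed steps.'' Your argument carries out the first of these two hinted routes in full, and in my judgement it is essentially correct: the rescaling $x=u\sqrt{\ell/2}$, the pointwise limit $\bigl|\Gamma(\tfrac{\ell}{2}+iu\sqrt{\ell/2})/\Gamma(\tfrac{\ell}{2})\bigr|^2\to e^{-u^2}$, the evaluation $\int_0^\infty\log(1-e^{-u^2})\,\de{u}=-\tfrac{\sqrt\pi}{2}\zeta(3/2)$, and the final bookkeeping all check out and reproduce \eqref{e:theta_large_l}. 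Compared with the probabilistic route (Gaussian approximation of the $\sech^{\ell}$ random walk), your analytic route has the advantage of needing nothing beyond the integral representation \eqref{e:large_l} itself, and the product formula $\bigl|\Gamma(a+it)/\Gamma(a)\bigr|^2=\prod_{k\ge0}\bigl(1+t^2/(a+k)^2\bigr)^{-1}$ is exactly the right tool for the uniform domination.

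Two small repairs are needed. First, you state that the rescaled integral should converge to $-\sqrt{\pi}\,\zeta(3/2)$, but the value required (and the one you actually compute) is $-\tfrac{\sqrt{\pi}}{2}\zeta(3/2)$; only the earlier sentence is off by a factor $2$, the final assembly is consistent. Second, the inequality $\prod_{k\ge0}\frac{(a+k)^2}{(a+k)^2+t^2}\le\frac{1}{1+t^2/a}$ does not follow from keeping a single factor of the product (the $k=0$ factor alone only gives $1/(1+t^2/a^2)$, which tends to $1$ in your scaling and is useless). It does hold, but via $\prod_k(1+y_k)\ge 1+\sum_k y_k$ with $y_k=t^2/(a+k)^2$, giving $\bigl|\Gamma(a+it)/\Gamma(a)\bigr|^2\le(1+t^2\psi'(a))^{-1}\le(1+t^2/a)^{-1}$ since $\psi'(a)\ge 1/a$. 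With that justification in place, the majorant $\log(1+u^{-2})$ is integrable on $(0,\infty)$ and dominated convergence closes the argument (the separate Stirling estimate for the tails is then not needed). Note finally that what you have proved is the asymptotics of the integral in Conjecture~\ref{c:large_l}; the identification of that integral with the actual persistence exponent for $\ell>1$ remains conjectural in the paper.
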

This can be either confirmed by an asymptotic analysis of the Gamma-function or by 
approximating the random walk described above by a random walk with Gaussian $N\left(0,\frac{4}{\ell}\right)$
distributed steps. By formally taking $\ell = 2n$, corresponding to a transition
from singular to non-singular measure in \eqref{e:denMN}, one gets half of the corresponding result
for the Real Ginibre ensemble (see, \cite[Thm. 1.1]{KPTTZ:16}). The factor one half originates from
the fact that truncated orthogonal matrix can not have eigenvalues outside the unit disk, but
the Real Gibre random matrix can.

Apart from studying the probability of having no real eigenvalues for a random matrix, 
one can also look at the probability $p_{2n,2k}^{\left(\ell\right)}$ of having $2k$ real 
eigenvalues. For $k=2n$ we computed this probability in  Proposition~\ref{p:all_real}. 
In the intermediate regime, $0< k < 2n$ we expect that the answer doesn't change until the point when $k$ 
changes from $0$ to roughly the average number of real roots, see similar results \cite{KPTTZ:16}. For $k$ being of order of $n$, 
analogously to the result of \cite{dMPTW:16}, we expect that the probability will decay 
exponentially in terms of $n^2$ with some non-trivial coefficient depending on a ratio $k/n$. 
For other values of $k$ the problem seem very challenging and technical.

\appendix
\section{Volume of orthogonal group}\label{s:v_N}

\begin{proposition}\label{p:delta_m}
	Let $A$ be a real number. Then 
	\begin{equation*}
		I_m\left(A\right) = 
		\int\limits_{\R^{m}} \delta\left(\vec{x}^T\vec{x}-A\right)\de x = 
		\frac{\pi^{\frac{m}{2}} A_+^{\frac{m-2}{2}}}{\Gamma\left(\frac{m}{2}\right)},
		\,\, \mbox{where}\quad x_+ = \max\left\{0,x\right\}.
	\end{equation*}
\end{proposition}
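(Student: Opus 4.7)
The plan is to reduce the integral to a one-dimensional integral using spherical coordinates and then exploit the scaling property of the Dirac delta.

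First I would observe that the integrand is supported on the set $\{\vec x : \vec x^T \vec x = A\}$, which is empty when $A<0$. Hence $I_m(A)=0$ for $A<0$, which agrees with the right-hand side since $A_+=0$ in that case. It remains to handle $A>0$ (the boundary case $A=0$ follows by continuity, or is simply a set of measure zero in the relevant regime provided $m\geq 3$; for $m=1,2$ one can verify the formula directly by hand).

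For $A>0$, I would pass to spherical coordinates $\vec x = r\vec\omega$ with $r=|\vec x|\geq 0$ and $\vec\omega\in S^{m-1}$. The Lebesgue measure factors as $d\vec x = r^{m-1}\,dr\,d\sigma(\vec\omega)$, where $d\sigma$ is the standard surface measure on $S^{m-1}$ of total mass $|S^{m-1}| = 2\pi^{m/2}/\Gamma(m/2)$. Since the integrand depends only on $r$, the angular integration simply produces this factor, yielding
\begin{equation*}
I_m(A) = \frac{2\pi^{m/2}}{\Gamma(m/2)}\int_0^\infty \delta(r^2-A)\,r^{m-1}\,dr.
\end{equation*}

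Then I would apply the one-dimensional change of variables $u=r^2$, $du=2r\,dr$, so that $r^{m-1}dr = \tfrac{1}{2}u^{(m-2)/2}du$, which reduces the remaining integral to $\tfrac{1}{2}\int_0^\infty \delta(u-A)u^{(m-2)/2}du = \tfrac{1}{2}A^{(m-2)/2}$ for $A>0$. Substituting back gives
\begin{equation*}
I_m(A) = \frac{\pi^{m/2}}{\Gamma(m/2)}\,A^{(m-2)/2},
\end{equation*}
as claimed. There is no real obstacle here; the only mild subtlety is justifying the use of the delta function rigorously, which one does either by interpreting the identity distributionally (testing against a smooth function of $A$) or by approximating $\delta$ by a sequence of mollifiers and passing to the limit using dominated convergence on each bounded interval of $A$.
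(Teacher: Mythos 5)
Your proof is correct and follows essentially the same route as the paper: pass to spherical coordinates, pull out the surface area of $S^{m-1}$ (which the paper computes explicitly via a product of Beta functions rather than quoting), and evaluate the radial integral $\int_0^\infty \delta(r^2-A)r^{m-1}\,dr = \tfrac12 A_+^{(m-2)/2}$ by the substitution $u=r^2$.
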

\begin{proof}[Proof of Proposition~\ref{p:delta_m}]
	For $m=1$ the statement is obvious. For $m\geq 2$ we change to polar coordinates
	The integral above is now equal to
	\begin{align*}
		I_m\left(A\right) 
		&= \int\limits_{0}^{\infty}r^{m-1}\delta\left(r^2-A\right)\de{r}
		\int\limits_{0}^{2\pi} \de{\phi_{m-1}}
		\prod\limits_{j=1}^{m-2}\int\limits_{0}^{\pi}\sin^{m-1-j}\phi_{j}\de{\phi_j}
		\\
		&= \frac{1}{2}A_+^{\frac{m-2}{2}}2\pi\prod_{j=1}^{m-2}
		B\left(\frac{m-j}{2},\frac{1}{2}\right)
		= \frac{\pi^{\frac{m}{2}} A_+^{\frac{m-2}{2}}}{\Gamma\left(\frac{m}{2}\right)}.		
	\end{align*}
\end{proof}

\begin{proposition}\label{p:v_N}
	The volume of the orthogonal group $O\left(N\right)$ 
	is equal to
	\begin{equation*}
		v_N = \int\limits_{\R^{N^2}} \delta\left(O^TO-I_N\right)\De{O} = 
		\prod\limits_{j=1}^N \frac{\pi^{j/2}}{\Gamma\left(\frac{j}{2}\right)},
	\end{equation*}
	where $\De{O}$ is the flat Lebesgue measure on $\R^{N^2}$.
\end{proposition}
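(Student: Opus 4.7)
The plan is to prove the formula by induction on $N$, exploiting the rotational invariance of Lebesgue measure on $\R^{N\times N}$ together with the constraint block structure of $O^T O - I_N$. The base case $N=1$ is immediate: the constraint reduces to $\delta(O^2-1)$ on $\R$, which integrates to $1 = \pi^{1/2}/\Gamma(1/2)$. The inductive claim I will prove is the recursion
\begin{equation*}
v_N = \frac{\pi^{N/2}}{\Gamma(N/2)}\, v_{N-1},
\end{equation*}
which together with the base case yields the product formula.

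To establish the recursion, I would write $O=(c_1,Q)$ with $c_1\in\R^N$ the first column and $Q\in \R^{N\times(N-1)}$ the remaining block, so that
\begin{equation*}
\delta\bigl(O^TO-I_N\bigr)
=\delta(c_1^Tc_1-1)\,\delta(Q^Tc_1)\,\delta(Q^TQ-I_{N-1}).
\end{equation*}
For a fixed unit vector $c_1$, the inner integral over $Q$ is invariant under the change $Q\mapsto RQ$ for any $R\in O(N)$, because Lebesgue measure on $\R^{N(N-1)}$ is preserved and the integrand only depends on the pair $(R^Tc_1, Q^TQ)$. Choosing $R$ so that $R c_1 = e_1$ reduces the integral to
\begin{equation*}
\int \delta(Q^Te_1)\,\delta(Q^TQ-I_{N-1})\,\de Q,
\end{equation*}
which is independent of $c_1$.

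The factor $\delta(Q^Te_1)$ collapses the first row of $Q$ to zero, after which $Q^TQ$ depends only on the bottom $(N-1)\times(N-1)$ block $\hat Q$; the remaining integral is exactly $v_{N-1}$. The outer integral over $c_1$ then becomes $\int \delta(c_1^Tc_1-1)\de{c_1} = I_N(1) = \pi^{N/2}/\Gamma(N/2)$ by Proposition~\ref{p:delta_m}, giving the recursion and completing the induction.

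The main subtlety I expect is a careful bookkeeping of the block $\delta$-function: since $O^TO-I_N$ is symmetric, $\delta(O^TO-I_N)$ is really a product over $N(N+1)/2$ independent scalar constraints, and one must check that the factorization into $\delta(c_1^Tc_1-1)\,\delta(Q^Tc_1)\,\delta(Q^TQ-I_{N-1})$ counts each constraint exactly once and uses the natural normalization. Beyond this, the argument is routine.
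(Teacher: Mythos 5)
Your proposal is correct, and it proves the same recursion $v_N=\frac{\pi^{N/2}}{\Gamma(N/2)}\,v_{N-1}$ as the paper, but by a genuinely different route. The paper peels off the first row \emph{and} column simultaneously, writing $O_{k+1}=\left(\begin{smallmatrix} m & \vec b^T\\ \vec c & D\end{smallmatrix}\right)$; it then integrates out $\vec b$ against the off-diagonal constraint, which produces a Jacobian $|m|^{-k}$, and performs the change of variables $D=V^{-1/2}\hat D$ with $V=I_k+\vec c\vec c^T/m^2$, which produces a second Jacobian $\det^{-k/2}V$. These two factors combine to $(m^2+\vec c^T\vec c)^{-k/2}$, which equals $1$ precisely on the support of $\delta(m^2+\vec c^T\vec c-1)$ -- a cancellation one has to notice. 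You instead peel off only the first column and exploit the invariance of $\De{Q}$ under $Q\mapsto RQ$ to rotate $c_1$ to $e_1$, after which $\delta(Q^Te_1)$ kills the first row of $Q$ and the remaining integral is $v_{N-1}$ on the nose, with no Jacobians at all. Your constraint bookkeeping is also sound: $1+(N-1)+\tfrac{N(N-1)}{2}=\tfrac{N(N+1)}{2}$ scalar constraints, each counted once. Your argument is shorter and more conceptual; the paper's block-integration style has the advantage of being the same machinery it reuses in Lemma~\ref{l:MC_distr} and Proposition~\ref{p:R_integration}, where no rotational symmetry is available. One cosmetic point: you want $R^Tc_1=e_1$ (equivalently $c_1=Re_1$) rather than $Rc_1=e_1$ for the substitution $Q=R\tilde Q$ as written, but since such an $R$ exists for every unit vector $c_1$ this does not affect the argument.
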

\begin{remark}
	This is different to what was stated in \cite{KSZ:10}.
\end{remark}
\begin{proof}[Proof of Proposition~\ref{p:v_N}]
	We proof the statement by induction. For $N=1$ one can easily check that
	\begin{equation*}
		\int\limits_{\R}\delta\left(x^2-1\right)\de{x} = 1.
	\end{equation*}
	Let now $N=k+1$ and we split every matrix into blocks of the following form
	\begin{equation*}
		O_{k+1} = 
		\begin{pmatrix}
			m & \vec{b}^T \\
			\vec{c} & D
		\end{pmatrix},
	\end{equation*}
	where $\vec{b},\vec{c}$ are $k$-dimensional column vectors and $D$ is a 
	$k\times k$ real 
	matrix. The corresponding integral can now be written as
	\begin{equation*}
		v_{k+1} =  \int
		\delta\left(m^2+\vec{c}^T\vec{c}-1\right)
		\delta\left(m\vec{b}^T+\vec{c}^TD\right)
		\delta\left(\vec{b}\vec{b}^T+D^TD-I_k\right)
		\de{m}\de{\vec{b}} 
		\de{\vec{c}} \De{D}.
	\end{equation*}
	Integrating out $\vec{b}$, we obtain 
	\begin{equation*}
		v_{k+1} =  \int\de{m}
		\de{\vec{c}} \De{D}
		\delta\left(m^2+\vec{c}^T\vec{c}-1\right)
		\delta\left(D^T\frac{\vec{c}\vec{c}^T}{m^2}D+D^TD-I_k\right)
		\left|m\right|^{-k}.
	\end{equation*}
	Integration over $D$ can be performed by using the induction hypothesis. We define
	\begin{equation*}
		V = I_k + \frac{\vec{c}\vec{c}^T}{m^2},
	\end{equation*}
	which is a real symmetric, positive definite rank one perturbation of the identity
	with determinant $\det V = I + \frac{\vec{c}^T\vec{c}}{m^2}$.
	Changing variables with 
	\begin{equation*}
		D = V^{-1/2}\hat{D},
	\end{equation*}
	one gets
	\begin{align*}
		v_{k+1} &=  \int
		\delta\left(m^2+\vec{c}^T\vec{c}-1\right)
		\delta\left(\hat{D}^T\hat{D}-I_k\right)
		\left|m\right|^{-k}\det^{-k/2}V
		\de{m}
		\de{\vec{c}} \De{\hat{D}}\\
		&= v_k \int	
		\delta\left(m^2+\vec{c}^T\vec{c}-1\right)
		\left(m^2+\vec{c}^T\vec{c}\right)^{-k/2}
		\de{m}
		\de{\vec{c}} \\
		&= v_k \int	
		\delta\left(m^2+\vec{c}^T\vec{c}-1\right)
		\de{m}
		\de{\vec{c}} .
	\end{align*}
	Finally, we integrate over $\vec{c}$ using Proposition~\ref{p:delta_m} and obtain
	\begin{equation*}
		v_{k+1} = v_k \frac{\pi^{\frac{k}{2}}}{\Gamma\left(\frac{k}{2}\right)} 
		\int\limits_{\R} \de{m } \left(1-m^2\right)_+^{\frac{k}{2}-1}
		= v_k \frac{\pi^{\frac{k}{2}}}{\Gamma\left(\frac{k}{2}\right)} 
		B\left(\frac{k}{2},\frac{1}{2}\right) = 
		v_k\frac{\pi^{\frac{k+1}{2}}}{\Gamma\left(\frac{k+1}{2}\right)},
	\end{equation*}
	and the statement follows.
\end{proof}

\section{Properties of Pfaffians and Proof of Proposition~\ref{p:checkboard}}\label{app:Auxiliary}
The Pfaffian is an analogue of the determinant defined for skew-symmetric matrices of even size. 
Let $A = \left\{a_{j,k}\right\}_{j,k=1}^{2n}$ be a skew-symmetric matrix with entries $a_{j,k} = - a_{k,j},
\, j,k=1,\ldots,2n$. Then its Pfaffian is defined by
\begin{equation*}
	\Pf A = \frac{1}{2^nn!}\sum\limits_{\sigma \in S_{2n}} 
	\sgn\left(\sigma\right)\prod\limits_{j=1}^n a_{\sigma\left(2j-1\right)}
	a_{\sigma\left(2j\right)},
\end{equation*}
where the sum is taken over all permutations of elements $\left(1,2,\ldots,2n\right)$. For
skew-symmetric matrices of odd size the Pfaffian is defined to be zero. The Pfaffian can be thought as a square
root of the determinant because of an identity
\begin{equation*}
	\Pf^2 A = \det A,
\end{equation*}
valid vor any skew-symmetric matrix. Below we also use another definition of the Pfaffian via integration 
over Grassmann (anticommuting) variables. Let $\left(\phi_1,\phi_2,\ldots,\phi_j,\ldots\right)$ and
$\left(\psi_1,\psi_2,\ldots,\psi_j,\ldots\right)$ be two families of anticommuting variables
\begin{equation}\label{propertiesGrassmann}
	\phi_j\phi_k=-\phi_k\phi_j, 
	\phi_j\psi_k=-\psi_k\phi_j,
	\psi_j\psi_k=-\psi_k\psi_j.
\end{equation}
Functions of Grassmann variables are defined by the corresponding Taylor series, which are always finite because
of Grassmann variables being nilpotent. The Berezin integral with respect to these 
variables is formally defined using the identities
\begin{equation}\label{propertiesGrassmann2}
	\int \de{\phi_j} = \int \de{\psi_j} = 0, \int \de{\phi_j} \phi_j = \int \de{\psi_j} \psi_j = 1,
\end{equation}
and a multiple integral is defined to be a repeated one. Then for any matrix $M$ of size
$n\times n$ one can see that
\begin{equation*}
	\int \de{\phi_1}\de{\psi_1}\de{\phi_2}\de{\psi_2}\ldots\de{\phi_n}\de{\psi_n}
	\exp{
	-\sum\limits_{j,k=1}^n M_{j,k}\phi_j\psi_k 
	} = \det M.
\end{equation*}
The above follows from two simple observations: Expanding the exponential function and using \eqref{propertiesGrassmann} and \eqref{propertiesGrassmann2}, one sees that the integral on the left is given by the coefficient
in front of the monomial $\phi_1\psi_1\phi_2\psi_2\ldots\phi_n\psi_n$. This term comes only from
expanding $\displaystyle\frac{1}{n!}\Big(-\sum\limits_{j,k=1}^n M_{j,k}\phi_j\psi_k \Big)^n$. Analogously,
one can also write the Pfaffian in terms of the Berezin integral. Let $M$ be a skew-symmetric matrix of size $2n\times 2n$,
then the result reads 
\begin{equation*}
	\int
	\de{\phi_1}\ldots\de{\phi_{2n}}
	\exp{-\dfrac{1}{2}\sum\limits_{j,k=1}^{2n}M_{j,k}\phi_j\phi_k} = \Pf M.
\end{equation*}
This also follows from finding the coefficient in front of the monomial 
$\phi_1\phi_2\ldots\phi_{2n}$ that in its turn comes from expanding 
$\displaystyle\frac{1}{n!}\Big(-\dfrac{1}{2}\sum\limits_{j,k=1}^{2n}M_{j,k}\phi_j\phi_k \Big)^n$.
For more information about Berezin integrals and Grassmann variables we refer to \cite{Be:87}
and about Pfaffians to \cite{Ho:15}.

\begin{proof}[Proof of Proposition~\ref{p:checkboard}]
	Writing $\Pf A$ as a Berezin integral over Grassmann variables, we obtain
	\begin{equation*}
	\Pf A = \int
	\de{\psi_0}\ldots\de{\psi_{2n-1}}
	\exp{-\dfrac{1}{2}\sum\limits_{j,k=0}^{2n-1}
		A_{j,k}\psi_j\psi_k}.			
	\end{equation*}
	$A$ has checkboard pattern, and therefore there are no
	terms in the exponent containing $\psi_j\psi_k$ with even $j-k$.
	Let us split the Grassmann variables into two groups: with even and odd indexes
	which do not "interact". Then
	\begin{align*}
	\Pf A &= \int
	\de{\psi_0}\ldots\de{\psi_{2n-1}}
	\exp{-\dfrac{1}{2}\sum\limits_{j,k=0}^{n-1}\psi_{2j}\psi_{2k-1}
		\left(A_{2j,2k-1}-A_{2k-1,2j}\right)}
	\\
	&=
	\int
	\de{\psi_0}\ldots\de{\psi_{2n-1}}
	\exp{-\sum\limits_{j,k=0}^{n-1}\psi_{2j}\psi_{2k-1}A_{2j,2k-1}}=
	\det A',			
	\end{align*}
	where we used determinant representation via Grassmann variables with $A' = 
	\left\{a_{2i,2j+1}\right\}_{i,j=0}^{n-1}$.
\end{proof}

\end{document}